\def\blfootnote{\xdef\@thefnmark{}\@footnotetext}
\newlength\Textht
\newtheorem{thm}{Theorem}[section]
\newtheorem{cor}[thm]{Corollary}
\newtheorem{lemma}[thm]{Lemma}
\newtheorem{prop}[thm]{Proposition}
\newtheorem{que}[thm]{Question}
\theoremstyle{definition}
\newtheorem{rem}[thm]{Remark}
\newtheorem{ex}[thm]{Example}
\newcommand{\rk}{\operatorname{r}}
\newcommand{\id}{\operatorname{Id}}
\newcommand{\rrk}{\operatorname{\tilde{r}}}
\newcommand{\Z}{{\mathbb Z}}
\newcommand\ZZ{\mathbb{Z}}
\newcommand{\C}{{\mathbb C}}
\newcommand{\Q}{{\mathbb Q}}
\newcommand{\N}{{\mathbb N}}
\newcommand{\Mat}{\operatorname{M}}
\newcommand{\GL}{\operatorname{GL}}
\newcommand{\Aut}{\operatorname{Aut}}
\newcommand{\Out}{\operatorname{Out}}
\newcommand{\End}{\operatorname{End}}
\newcommand{\Inn}{\operatorname{Inn}}
\newcommand{\im}{\operatorname{Im}}
\newcommand{\Fix}{\operatorname{Fix}}
\newcommand{\acl}{\operatorname{a-Cl}}
\newcommand{\ecl}{\operatorname{e-Cl}}
\newcommand{\Per}{\operatorname{Per}}
\newcommand{\ord}{\operatorname{ord}}
\newcommand{\lcm}{\operatorname{lcm}}
\begin{document}

\title[Fixed subgroups and auto-fixed closures in free-abelian times free groups]{Fixed subgroups and computation of auto-fixed closures in free-abelian times free groups}

\author{Mallika Roy}
\address{Departament de Matem\`atiques, Universitat Polit\`ecnica de Catalunya, CATALONIA.} \email{mallika.roy@upc.edu}

\author{Enric Ventura}
\address{Departament de Matem\`atiques, Universitat Polit\`ecnica de Catalunya, CATALONIA.} \email{enric.ventura@upc.edu}

\subjclass{20E05, 20E36, 20K15}

\keywords{free-abelian by free, automorphism, fixed subgroup, periodic subgroup, auto-fixed closure.}

\begin{abstract}
The classical result by Dyer--Scott about fixed subgroups of finite order automorphisms of $F_n$ being free factors of $F_n$ is no longer true in $\Z^m\times F_n$. Within this more general context, we prove a relaxed version in the spirit of Bestvina--Handel Theorem: the rank of fixed subgroups of finite order automorphisms is uniformly bounded in terms of $m,n$. We also study periodic points of endomorphisms of $\Z^m\times F_n$, and give an algorithm to compute auto-fixed closures of finitely generated subgroups of $\Z^m\times F_n$. On the way, we prove the analog of Day's Theorem for real elements in $\Z^m\times F_n$, contributing a modest step into the project of doing so for any right angled Artin group (as McCool did with respect to Whitehead's Theorem in the free context).
\end{abstract}

\maketitle

\section{Introduction}

The goal of this paper is to investigate the properties of fixed point subgroups of automorphisms of direct products of free-abelian and free groups, $\Z^m \times F_n$. The lattice of subgroups of these groups is quite different from that of free groups, since $\Z^m\times F_n$ is not Howson (i.e., the intersection of two finitely generated subgroups is not necessarily finite generated) as soon as $m\geqslant 1$ and $n\geqslant 2$. This affects seriously to the behaviour of the rank function, forcing many situations to degenerate with respect to what happens in free groups. However, there are still several surviving governing rules; we concentrate on some of them, specially about those concerning subgroups fixed by automorphisms of $\Z^m \times F_n$.

Let $G$ be a group.

We denote by $\rk(G)$ the \emph{rank} of $G$, i.e., the minimal number of generators for $G$; also, $\rrk(G)=\max \{\rk(G)-1, 0\}$ denotes the \emph{reduced rank} of $G$. We denote by $\End(G)$ (resp., $\Aut(G)$) the monoid (resp., group) of endomorphisms (resp., automorphisms) of $G$, and write them all with the arguments on the left, $g\mapsto g\alpha$; so, accordingly, $\alpha\beta$ denotes the composition $g\mapsto g\alpha \mapsto g\alpha\beta$. Specifically, we will reserve the letter $\gamma$ for right conjugations, $\gamma_x\colon G\to G$, $g\mapsto x^{-1}gx$.

We will denote by $\Mat_{n\times m}(\Z)$ the $n\times m$ (additive) group of matrices over $\Z$, and by $\GL_m(\Z)$ the linear group over the integers. When thinking a matrix $A$ as a map, it will always act on the right of horizontal vectors, $v\mapsto vA$.

Given a set $S\subseteq \End(G)$, we let $\Fix(S)$ denote the subgroup of $G$ consisting of those $g\in G$ which are fixed by every element of $S$, $\Fix(S)=\{g\in G \mid g\alpha=g,\,\,\, \forall \alpha\in S\}=\cap_{\alpha\in S} \Fix(\{\alpha\})$, called the \emph{fixed subgroup} of $S$ (read $\Fix(\emptyset )=G$). For simplicity, we write $\Fix \phi =\Fix (\{\phi\})$.

For an endomorphism $\phi \in \End(G)$, define its \emph{periodic} subgroup as $\Per \psi =\cup_{p=1}^{\infty} \Fix \psi^p$ (note that this is always a subgroup since $x\in \Fix \psi^p$ and $y\in \Fix \psi^q$ imply $xy\in \Fix \psi^{pq}$). Observe that $\Per \psi$ contains the lattice of subgroups given by $\Fix \psi^p$, $p\in \N$, with inclusions among them exactly according to divisibility among the exponents: if $r|s$ then $\Fix \phi^r\leqslant \Fix \phi^s$; and also, if $\Fix \phi^r\leqslant \Fix \phi^s$ and $d=\gcd(r,s)=\alpha r+\beta s$, $\alpha,\beta \in \Z$, then $\Fix \phi^r=\Fix \phi^d$ and $d|s$.

Any direct product of a free-abelian group, $\Z^m$, $m\geqslant 0$, and a free group, $F_n$, $n\geqslant 0$, will be called, for short, a \emph{free-abelian times free} group, $G=\Z^m\times F_n$. We will work in $G$ with multiplicative notation (as it is a non-abelian group as soon as $n\geqslant 2$) but want to refer to its subgroup $\Z^m\leqslant G$ with the standard additive notation (elements thought as row vectors with addition). To make these compatible, consider the standard presentations $\Z^m=\langle t_1, \ldots ,t_m \mid [t_i,t_j],\,\,\, i,j=1,\ldots ,m\rangle$ and $F_n=\langle z_1, \ldots ,z_n \mid \,\,\rangle$, and the standard normal form for elements from $G$ with vectors on the left, namely $t_1^{\alpha_1}\cdots t_m^{\alpha_m}w(z_1, \ldots ,z_n)$, where $\alpha_1,\ldots ,\alpha_m\in \Z$ and $w\in F_n$ is a reduced word on the alphabet $Z=\{z_1, \ldots ,z_n\}$; then, let us abbreviate this in the form
 $$
t_1^{\alpha_1}\cdots t_m^{\alpha_m}w(z_1, \ldots ,z_n) =t^{(\alpha_1, \ldots ,\alpha_m)}w(z_1,\ldots ,z_n)=t^a w(z_1,\ldots ,z_n),
 $$
where $a=(\alpha_1,\ldots ,\alpha_m)\in \Z^m$ is the row vector made with the integers $\alpha_i$'s, and $t$ is a meaningless symbol serving only as a pillar for holding the vector $a=(\alpha_1,\ldots ,\alpha_m)$ up in the exponent. This way, the operation in $G$ is given by $(t^a u)(t^b v)=t^a t^b uv=t^{a+b}uv$ in multiplicative notation, while the abelian part works additively, as usual, up in the exponent. We denote by $\pi$ the natural projection to the free part, $\pi\colon \Z^m\times F_n \twoheadrightarrow F_n$, $t^a u\mapsto u$.

According to Delgado--Ventura~\cite[Def.~1.3]{DV}, a \emph{basis} of a finitely generated subgroup $H\leqslant_{fg} G$ is a set of generators for $H$ of the form $\{t^{a_1}u_1, \ldots ,t^{a_r}u_r,\, t^{b_1},\ldots ,t^{b_s}\}$, where $a_1, \ldots ,a_r\in \Z^m$, $\{ u_1, \ldots ,u_r\}$ is a free-basis of $H\pi\leqslant F_n$, and $\{ b_1,\ldots ,b_s\}$ is an abelian basis of $L_H =L\cap \Z^m \leqslant \Z^m$. (Note that, to avoid confusions, we reserve the word \emph{basis} for $G$, in contrast with \emph{abelian-basis} and \emph{free-basis} for the corresponding concepts in $\Z^m$ and $F_n$, respectively.) It was showed in~\cite{DV} that every such subgroup $H\leqslant_{fg} G$ admits a basis, algorithmically computable from any given set of generators. Furthermore, any subgroup $H\leqslant \Z^m\times F_n$, $n\geqslant 2$, is again free-abelian times free, $H\simeq \Z^{m'}\times F_{n'}$, for some $0\leqslant m'\leqslant m$ and some $0\leqslant n'\leqslant \infty$ (and hence, it is finitely generated if and only if $H\pi\leqslant F_n$ is so).

We recall from Delgado--Ventura~\cite[Props.~5.1,~5.2(iii)]{DV} that every automorphism $\Psi$ of the group $G=\Z^m \times F_n$, $n\geqslant 2$, is of the form $\Psi =\Psi_{\phi,Q,P}\colon G\to G$, $t^{a}u\mapsto t^{aQ+u^{\rm ab}P}(u\phi)$, where $\phi \in \Aut(F_n)$, $Q\in \GL_m(\Z)$, $P\in M_{n\times m}(\Z)$, and $u^{\rm ab}\in \Z^n$ is the abelianization of $u\in F_n$. Furthermore, the composition and inversion of automorphisms work like this:
 \begin{equation}\label{.}
\Psi_{\phi,Q,P} \Psi_{\phi',Q',P'}= \Psi_{\phi\phi',QQ',PQ'+AP'},\qquad (\Psi_{\phi,Q,P})^{-1}= \Psi_{\phi^{-1},Q^{-1},-A^{-1}PQ^{-1}},
 \end{equation}
where $A\in M_n(\Z)$ is the matrix of the abelianization of $\phi$; see~\cite[Lem.~5.4]{DV}. We shall use lowercase Greek letters for endomorphisms of free groups, $\phi \colon F_n \mapsto F_n$ and uppercase Greek letters for endomorphisms of free-abelian times free groups, $\Psi \colon \Z^m \times F_n \mapsto \Z^m \times F_n$. In particular, $\Gamma_{t^au}=\Gamma_u=\Psi_{\gamma_u, I_m, 0}\in \Inn(G)$ is the right conjugation by $t^au$ (or, equivalently, by $u$).

The paper is organized as follows. In Section~\ref{2}, we collect several folklore facts about $\GL_m(\Z)$ for later use; for completeness, we provide proofs highlighting several technical subtleties coming from the fact that $\Z$ is not a field, but just an integral domain. In Section~\ref{3}, we concentrate on finite order automorphisms of $\Z^m \times F_n$ and show that their fixed subgroups are always finitely generated, with rank globally bounded by a computable constant depending only on the ambient ranks $m,n$ (and not depending on the specific automorphism in use); see Theorem~\ref{fo fg}. In Section~\ref{4}, we turn to study periodic points and we manage to extend to free-abelian times free groups a result known to hold both in free-abelian groups and in free groups: the periodic subgroup of an endomorphism equals the fixed subgroup of a high enough power and, furthermore, this exponent can be taken uniform for all endomorphisms, depending only on the ambient ranks $m, n$; see Theorem~\ref{periodic-fatf}. In Section~\ref{5}, we consider the auto-fixed closure of a finitely generated subgroup $H$ (roughly speaking, the set of elements fixed by every automorphism fixing $H$); we prove that it always equals a finite intersection of fixed subgroups, we compute the candidate automorphisms, we decide whether it is finitely generated or not, and in case it is, we effectively compute a basis for it; see Theorem~\ref{closure}. As a consequence, we obtain an algorithm to decide whether a given finitely generated subgroup $H$ is auto-fixed or not; see Corollary~\ref{deciding}. To achieve this goal, we make use of a recent result by M. Day about stabilizers of tuples of conjugacy classes in right angled Artin groups being finitely presented, and we prove the analogous version for tuples of exact elements in $\Z^m\times F_n$. In fact, we only need finite generation and computability of these stabilizers; however, for completeness, we also prove its finite presentability postponing the analysis of the relations (a bit more technical) to the Appendix~\ref{6}.

\section{Preliminaries on $\GL_m(\Z)$}\label{2}

In this section we collect well known and folklore results about the general linear group over the integers, $\GL_m(\Z)$. This group is very well studied in the literature, but we are interested in highlighting several subtleties coming from the fact that $\Z$ is not a field, but just an integral domain.

\begin{lemma}\label{directsum}
Let $Q\in \GL_m(\Z)$ be a matrix such that $Q^k=I_m$. Then, we have the decomposition $\Z^m= \ker (Q-I_m) \oplus \ker (Q^{k-1}+ \cdots +Q+I_m)$.
\end{lemma}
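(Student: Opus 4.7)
I would approach the decomposition in two stages, exploiting the polynomial identity $x^k - 1 = (x - 1)\,p(x)$, where $p(x) := x^{k-1} + \cdots + x + 1$. Since $Q^k = I_m$, evaluating at $Q$ yields the matrix identity $(Q - I_m)\,p(Q) = 0$ (and symmetrically $p(Q)(Q - I_m) = 0$), hence both $\im(Q - I_m) \subseteq \ker p(Q)$ and $\im p(Q) \subseteq \ker(Q - I_m)$.

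For the trivial-intersection half, if $v \in \ker(Q - I_m) \cap \ker p(Q)$, then membership in $\ker(Q - I_m)$ gives $vQ^i = v$ for each $i \geqslant 0$, so $v\,p(Q) = kv$; combined with $v\,p(Q) = 0$ this forces $kv = 0$, whence $v = 0$ by torsion-freeness of $\Z^m$. In particular the sum $\ker(Q - I_m) + \ker p(Q)$ is automatically internal and direct inside $\Z^m$.

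For the harder half---that the sum exhausts $\Z^m$---one starts from the fact that $p(1) = k$: the polynomial $p(x) - k$ is divisible by $x - 1$ in $\Z[x]$, so $p(x) = k + (x - 1)\,q(x)$ for some $q \in \Z[x]$. Evaluating at $Q$ and applying to an arbitrary $v \in \Z^m$ gives
\[
k v \;=\; v\,p(Q) \;-\; v(Q - I_m)\,q(Q),
\]
with the first summand in $\ker(Q - I_m)$ and the second in $\ker p(Q)$ (the latter because all these matrices are polynomials in $Q$, hence commute, and $p(Q)(Q - I_m) = 0$). This places $k\,\Z^m$ inside $\ker(Q - I_m) \oplus \ker p(Q)$, so the sum already has index dividing a power of $k$ in $\Z^m$.

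The main obstacle is then closing the gap from $k\,\Z^m$ to all of $\Z^m$: over $\Q$ the decomposition is immediate from coprimality of $x - 1$ and $p(x)$, but integrality introduces potential defects at primes dividing $k$. I would tackle this by exploiting that $Q$ is of finite order in $\GL_m(\Z)$---so its minimal polynomial in $\Z[x]$ divides $x^k - 1$ and is squarefree over $\Q$---either via a Smith normal form analysis of the pair $(Q - I_m,\, p(Q))$, or by a prime-by-prime localization argument showing that any $\ell$-primary component of the putative quotient $\Z^m / (\ker(Q - I_m) \oplus \ker p(Q))$ is $Q$-invariant and simultaneously annihilated by both $Q - I_m$ and $p(Q)$, hence must vanish. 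This integrality step is where I expect all the real work of the proof to concentrate, since the analogue over $\Q$ is essentially automatic.
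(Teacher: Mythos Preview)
Your instinct that the integrality step is where the real difficulty lies is exactly right---so right, in fact, that the step cannot be completed: the lemma as stated is \emph{false} over $\Z$. Take $m=2$, $k=2$, and $Q=\left(\begin{smallmatrix}0&1\\1&0\end{smallmatrix}\right)\in\GL_2(\Z)$. Then $\ker(Q-I_2)=\langle(1,1)\rangle$ and $\ker(Q+I_2)=\langle(1,-1)\rangle$, and their sum has index $2$ in $\Z^2$; the vector $(1,0)$ does not decompose. Your computation $k\,\Z^m\subseteq\ker(Q-I_m)\oplus\ker p(Q)$ is correct and essentially sharp. The localization argument you sketch (``annihilated by both $Q-I_m$ and $p(Q)$, hence must vanish'') fails precisely here: in the example the quotient $\Z^2/\big(\ker(Q-I_2)\oplus\ker(Q+I_2)\big)\simeq\Z/2$ is indeed killed by both $Q-I_2$ and $Q+I_2$, yet is nonzero, because over $\Z/2$ these two matrices coincide.

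The paper's own proof contains the same error in a different guise: it asserts a Bezout identity $1=\alpha(x)\,p(x)+\beta(x)(x-1)$ with $\alpha,\beta\in\Z[x]$, but evaluating at $x=1$ gives $1=k\,\alpha(1)$, impossible for $k>1$. The identity only holds in $\Q[x]$, so the argument actually proves the rational decomposition $\Q^m=\ker_{\Q}(Q-I_m)\oplus\ker_{\Q} p(Q)$---equivalently, that the integral sum has finite index, which is what you established directly---but not the integral statement. (Your trivial-intersection argument, by contrast, is correct and cleaner than the paper's, using only $p(1)=k$ and torsion-freeness.)
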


\begin{proof}
Since $\gcd (x^{k-1}+\cdots +x+1,\, x-1)=1$, Bezout's equality gives us two polynomials $\alpha(x), \beta(x) \in \Z[x]$ such that $1=\alpha(x)(x^{k-1}+\cdots +x+1)+\beta(x)(x-1)$. Plugging $Q$, we obtain the matrix equality $I_m =\alpha(Q)(Q^{k-1}+\cdots +Q+I_m)+\beta(Q)(Q-I_m)$. Now, for every vector $v\in \Z^m$, we have $v=v\alpha(Q)(Q^{k-1}+\cdots +Q+I_m)+v\beta(Q)(Q-I_m)$. And, since $(Q-I_m)(Q^{k-1}+ \cdots +Q+I_m)=(Q^{k-1}+ \cdots +Q+I_m)(Q-I_m)=Q^k-I_m=0$, the first summand is in $\ker(Q-I_m)$ and the second one in $\ker (Q^{k-1}+ \cdots +Q+I_m)$; hence, $\Z^m= \ker (Q-I_m)+\ker (Q^{k-1}+ \cdots +Q+I_m)$.

Now let $v \in \ker (Q-I_m)\cap \ker (Q^{k-1}+\cdots +Q+I_m)$. This means that $v(Q-I_m)=0$ and $v(Q^{k-1}+ \cdots +Q+I_m)=0$, which imply $v=v(Q^{k-1}+\cdots +Q+I_m)\alpha(Q)+v(Q-I_m)\beta(Q)=0$. Thus, $\Z^m=\ker (Q-I_m) \oplus \ker (Q^{k-1}+\cdots +Q+I_m)$.
\end{proof}

\begin{prop}\label{Q-finite-order}
Consider the integral linear group $\GL_m(\Z)$, $m\geqslant 1$.
\begin{itemize}
\item[(i)] There exists a computable constant $L_1=L_1(m)$ such that, for every matrix $Q\in \GL_m(\Z)$ of finite order, $\ord(Q)\leqslant L_1$.
\item[(ii)] There exists a computable constant $L_2=L_2(m)$ such that, for every matrix $Q\in \GL_m(\Z)$ of finite order, say $k=\ord(Q)\leqslant L_1$, we have that $M=\im(Q-I_m)$ is a finite index subgroup of $\ker (Q^{k-1}+\cdots +Q+I_m)$ with $[\ker (Q^{k-1}+\cdots +Q+I_m) : M]\leqslant L_2$.
\end{itemize}
\end{prop}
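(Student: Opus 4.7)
For part (i), the plan is to invoke Minkowski's classical lemma on finite subgroups of $\GL_m(\Z)$: for any odd prime $p$, the reduction homomorphism $\rho_p\colon \GL_m(\Z) \to \GL_m(\Z/p\Z)$ is injective on torsion elements (otherwise some $Q = I_m + pN$ with $N \neq 0$ would satisfy $Q^k = I_m$, which leads to a contradiction via $p$-adic valuations of the binomial expansion of $(I_m + pN)^k$). Consequently, for every finite-order $Q \in \GL_m(\Z)$, $\ord(Q) = \ord(\rho_3(Q))$ divides $|\GL_m(\Z/3\Z)|$, and one may take the explicit computable bound
$$
L_1(m) = |\GL_m(\Z/3\Z)| = \prod_{i=0}^{m-1}(3^m - 3^i).
$$

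For part (ii), the plan is to identify the index $[N:M]$ with an integer determinant and bound it spectrally. Lemma~\ref{directsum} provides the decomposition $\Z^m = K \oplus N$, where $K = \ker(Q-I_m)$ and $N = \ker(Q^{k-1}+\cdots+Q+I_m)$. First, the inclusion $M \subseteq N$ is immediate from $(Q^{k-1}+\cdots+I_m)(Q-I_m) = Q^k - I_m = 0$. Second, since $(Q-I_m)|_K = 0$, one gets $M = (Q-I_m)(\Z^m) = (Q-I_m)(N)$, and as $\ker(Q-I_m) \cap N = K \cap N = 0$, the restriction $(Q-I_m)|_N \colon N \to N$ is an injective $\Z$-endomorphism. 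Hence $N$ and $M$ are free $\Z$-modules of equal rank, and
$$
[N:M] = \bigl|\det((Q-I_m)|_N)\bigr|,
$$
the absolute value of the integer determinant in any $\Z$-basis of $N$.

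To bound this determinant, extend scalars to $\C$: as $Q$ has finite order, it is diagonalizable over $\C$, and $N \otimes \C$ is precisely the sum of the $\lambda$-eigenspaces of $Q$ for eigenvalues $\lambda \neq 1$, which are non-trivial $k$-th roots of unity. The eigenvalues of $(Q-I_m)|_N$ are therefore $\zeta - 1$ with $\zeta \neq 1$ a $k$-th root of unity, each of absolute value at most $2$. Counting multiplicities, there are exactly $\rk N \leq m$ such eigenvalues, which yields $[N:M] \leq 2^{\rk N} \leq 2^m$, so one may set $L_2(m) = 2^m$. The main technical point worth carefully justifying is the identification of the $\Z$-determinant computed in a $\Z$-basis of $N$ with the product of the complex eigenvalues of $(Q-I_m)|_N$; this follows from the invariance of the characteristic polynomial under extension of scalars, together with the fact that the $\Z$-rank of $N$ equals the $\C$-dimension of $N \otimes \C$.
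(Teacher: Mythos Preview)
Your proof is correct and takes a genuinely different route from the paper in both parts.

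For (i), the paper argues via cyclotomic polynomials: the minimal polynomial of $Q$ divides $x^k-1$, so each irreducible factor is some $\Phi_d(x)$ with $\varphi(d)\leqslant m$; since $\varphi(d)\to\infty$, this bounds the $d$'s by a computable $C(m)$ and hence $k=\lcm(d_1,\ldots,d_r)\leqslant C(m)^m$. Your Minkowski argument (reduction mod~$3$ is torsion-free on the kernel) is a cleaner one-line citation and yields a completely explicit $L_1$, at the cost of invoking an external lemma rather than arguing from scratch.

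For (ii), the paper diagonalizes $Q$ over $\C$, observes that only finitely many diagonal forms $D$ can occur (since the eigenvalues are roots of unity of order $\leqslant C(m)$ and the multiplicities sum to $m$), and defines $L_2$ as the maximum of the indices $[\ker(D^{k-1}+\cdots+I_m):\im(D-I_m)]$ over this finite list, checking that conjugation by $P\in\GL_m(\C)$ preserves the index. Your argument is sharper: you identify $[N:M]$ with $|\det((Q-I_m)|_N)|$ via the injectivity of $(Q-I_m)|_N$, then bound this determinant spectrally by $\prod|\zeta-1|\leqslant 2^{\rk N}\leqslant 2^m$. This gives an explicit $L_2(m)=2^m$ with no case enumeration, and avoids the paper's slightly delicate passage through $\GL_m(\C)$. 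The only step worth a sentence more of justification is that $N$ is $(Q-I_m)$-invariant (since $Q$ commutes with $Q^{k-1}+\cdots+I_m$), so that the restriction $(Q-I_m)|_N\colon N\to N$ makes sense; once noted, the rest is straightforward.
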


\begin{proof}
(i) is a well known fact about integral matrices; we offer here a self-contained proof mixed with that of (ii).

Let $Q\in \GL_m(\Z)$ be a matrix of order $k<\infty$ (i.e., $Q^k=I_m$ but $Q^i\neq I_m$ for $i=1,\ldots ,k-1$).

Since $(Q-I_m)(Q^{k-1}+ \cdots +Q+I_m)=Q^k-I_m=0$, we have $M=\im (Q-I_m)\leqslant \ker (Q^{k-1}+\cdots +Q+I_m)$. But, by Lemma~\ref{directsum} and the Rank-Nullity Theorem, $\rk (M)= \rk (\im (Q-I_m))= m-\rk (\ker (Q-I_m))=\rk (\ker (Q^{k-1}+\cdots +Q+I_m))$ and so, $M\leqslant_{fi} \ker (Q^{k-1}+\cdots +Q+I_m)$. This is the index we have to bound globally in terms of $m$.

Let $m_Q(x)$ be the minimal polynomial of $Q$. Since $Q^k=I_m$, we have $m_Q(x) \,|\, x^k-1$ and so, $m_Q(x)=(x-\alpha_1)\cdots (x-\alpha_r)$, where $\alpha_1\, \ldots ,\alpha_r$ are pairwise different $k$-th roots of unity (in particular, all roots of $m_Q(x)$ are simple and so $Q$ diagonalizes over the complex field $\C$). Write $d_i =\ord(\alpha_i)$. Since cyclotomic polynomials $\Phi_{d_i}(x)$ are irreducible over $\Z$, we deduce $\Phi_{d_i}(x) \, \vert \, m_Q(x)$ and so, $\varphi(d_i)=\deg(\Phi_{d_i}(x))\leqslant \deg(m_Q(x))\leqslant m$, where $\varphi$ is the Euler $\varphi$-function. But it is well known that $\lim_{n\to \infty} \varphi(n)=\infty$; see, for example, Dummit--Foote~\cite[p.~8]{Dummit-Foote} from where we can compute a big enough constant $C=C(m)$ such that $d_1,\ldots ,d_r\leqslant C$. Finally, $k=\ord(Q)=\lcm(\ord(\alpha_1)\, \ldots ,\ord(\alpha_r))=\lcm(d_1, \ldots ,d_r)\leqslant d_1\cdots d_r\leqslant C^r\leqslant C^m$; this is the constant we are looking for in (i), $L_1=C(m)^m$.

On the other hand, diagonalyzing $Q$, we get an invertible complex matrix $P\in \GL_m(\C)$ such that $P^{-1}QP=D=\operatorname{diag}(\alpha_1, \stackrel{s_1}{\ldots} , \alpha_1, \ldots , \alpha_r,\stackrel{s_r}{\ldots}, \alpha_r)$, where $s_1,\ldots ,s_r$ are the multiplicities in the characteristic polynomial, $\chi_Q(x)=(x-\alpha_1)^{s_1} \cdots (x-\alpha_r)^{s_r}$. Since $\alpha_i$ is a primitive $d_i$-th root of unity, it can take $\varphi(d_i)\leqslant m$ many values and, since $s_1+\cdots +s_r=m$, the diagonal matrix $D$ can take only finitely many values; we can make a list of all of them (up to reordering of the $\alpha_i$'s) and, for each one, compute the index $[\ker (D^{k-1}+\cdots +D+I_m) : \im (D-I_m)]$. The maximum of these indices is the constant $L_2=L_2(m)$ we are looking for in (ii), because
 $$
[\ker (Q^{k-1}+\cdots +Q+I_m) : M]=[(\ker (Q^{k-1}+\cdots +Q+I_m))P : (\im (Q-I_m))P]=
 $$
 $$
=[\ker P^{-1}(Q^{k-1}+\cdots +Q+I_m)P : \im (P^{-1}(Q-I_m)P)]=[\ker (D^{k-1}+\cdots +D+I_m) : \im (D-I_m)].
 $$
\end{proof}

We study now the \emph{periodic} subgroup of a matrix $Q\in \Mat_m(\Z)$, namely $\Per Q=\{v\in \Z^m \mid vQ^p =v, \text{ for some } p\geqslant 1\}$. The next Proposition states that a uniform single exponent depending only on $m$, $L_3=L_3(m)$, is enough to capture \emph{all} the periodicity of \emph{all} $m\times m$ matrices $Q$.

\begin{prop}\label{periodic-abelian}
There exists a computable constant $L_3=L_3(m)$ such that $\Per Q=\Fix Q^{L_3}$, for every $Q\in \Mat_m(\Z)$.
\end{prop}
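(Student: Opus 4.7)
The plan is to exploit Proposition~\ref{Q-finite-order}(i) applied to the restriction of $Q$ to $\Per Q$. First, I would check that $\Per Q$ is $Q$-invariant: if $v\in \Per Q$ with $vQ^p=v$, then $(vQ)Q^p=(vQ^p)Q=vQ$, so $vQ\in \Per Q$ (with the same period). Since $\Z^m$ is Noetherian, $\Per Q$ is finitely generated; say $\Per Q\simeq \Z^{m'}$ with $m'\leqslant m$.

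Next I would argue that $Q|_{\Per Q}$ is an \emph{automorphism} of $\Per Q$. Injectivity is immediate: if $v\in \Per Q$ satisfies $vQ=0$, then $v=vQ^p=0$. Surjectivity is the subtle point (an injective endomorphism of $\Z^{m'}$ need not be surjective in general), but periodicity delivers it: for any $v\in \Per Q$ with $vQ^p=v$, the element $vQ^{p-1}$ lies in $\Per Q$ by invariance, and satisfies $(vQ^{p-1})Q=v$.

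Picking a free-basis $v_1,\ldots,v_{m'}$ of $\Per Q$ with periods $p_1,\ldots,p_{m'}$, we see that $Q^{\lcm(p_1,\ldots,p_{m'})}$ acts as the identity on $\Per Q$, so $Q|_{\Per Q}\in \GL_{m'}(\Z)$ has finite order. Proposition~\ref{Q-finite-order}(i) then bounds $\ord(Q|_{\Per Q})\leqslant L_1(m')\leqslant L_1(m)$. Defining $L_3=L_3(m)=L_1(m)!$ (computable, depending only on $m$, and divisible by every positive integer up to $L_1(m)$), we get $Q^{L_3}|_{\Per Q}=\id$, whence $\Per Q\subseteq \Fix Q^{L_3}$; the reverse inclusion is immediate from the definition of $\Per Q$.

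The only non-routine step is the surjectivity of $Q|_{\Per Q}$; once the reduction to a finite-order automorphism is made, Proposition~\ref{Q-finite-order}(i) does all the heavy lifting. Note that $Q$ itself need not be invertible, so the argument genuinely relies on restricting to the $Q$-invariant subgroup $\Per Q$ on which $Q$ behaves as a bona fide element of $\GL_{m'}(\Z)$.
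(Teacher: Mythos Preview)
Your argument is correct and in fact cleaner than the paper's. The paper proceeds by passing to $\C^m$, decomposing it into generalized eigenspaces $K_{\alpha_i}=\ker(Q-\alpha_iI_m)^{s_i}$, and then, for a periodic $v$, analyzing each component $v_i$ via a Bezout argument with the polynomials $(x-\alpha_i)^{s_i}$ and $x^p-1$: components attached to non-root-of-unity eigenvalues vanish, and those attached to a root of unity $\alpha_i$ turn out to be honest eigenvectors, whence $vQ^{L_3}=v$ with $L_3=C(m)!$.

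Your route is different: you stay entirely inside $\Z^m$, observe that $\Per Q$ is a $Q$-invariant sublattice of rank $m'\leqslant m$ on which $Q$ restricts to a bijection (the surjectivity step via $vQ^{p-1}$ is the key observation), and then note that this restriction is a finite-order element of $\GL_{m'}(\Z)$, so Proposition~\ref{Q-finite-order}(i) bounds its order. This is shorter, avoids the excursion to $\C$, and recycles Proposition~\ref{Q-finite-order}(i) instead of redoing its cyclotomic analysis. The only small thing worth making explicit is why $L_1(m')\leqslant L_1(m)$: this follows because the block-diagonal embedding $\GL_{m'}(\Z)\hookrightarrow \GL_m(\Z)$ preserves orders of elements (or, equivalently, one may simply take $L_3=\big(\max_{0\leqslant k\leqslant m} L_1(k)\big)!$). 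The trade-off is that the paper's direct eigenvalue analysis yields the sharper constant $L_3=C(m)!$ rather than your $L_1(m)!=(C(m)^m)!$, but for the purposes of this proposition that is immaterial.
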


\begin{proof}
As we argued in the proof of Proposition~\ref{Q-finite-order}(i), there is a computable constant $C=C(m)$ such that $\varphi(d)>m$ for every $d>C(m)$; see Dummit--Foote~\cite[p.~8]{Dummit-Foote}. Let us prove that the statement is true with the constant $L_3=C(m)!$

Fix a matrix $Q\in \Mat_m(\Z)$, and consider its characteristic polynomial factorized over the complex field $\C$, $\chi_Q(x)=(x-\alpha_1)^{s_1} \cdots (x-\alpha_r)^{s_r}$, where $\alpha_i\neq \alpha_j$, $i\neq j$. Standard linear algebra tells us that $\C^m=K_{\alpha_1} \oplus \cdots \oplus K_{\alpha_r}$, where $K_{\alpha_i}=\ker (Q-\alpha_i I_m)^{s_i}\leqslant \C^m$ is the generalized eigenspace of $Q$ with respect to $\alpha_i$, a $Q$-invariant $\C$-subspace of $\C^m$. Distinguish now between those $\alpha_i$'s which are roots of unity, say $\alpha_1,\ldots ,\alpha_{r'}$, and those which are not, say $\alpha_{r'+1}, \ldots ,\alpha_r$, $0\leqslant r'\leqslant r$. Write $d_i=\ord(\alpha_i)$, for $i=1,\ldots ,r'$, and observe that $d_1,\ldots ,d_{r'}\leqslant C$ (since the cyclotomic polynomials $\Phi_{d_i}(x)$ are $\Q$-irreducible and so must divide $\chi_Q(x)\in \Z[X]$, which has degree $m$); in particular, $\alpha_i^{L_3}=1$, $i=1,\ldots ,r'$.

Now, let $v\in \Per Q$, i.e., $vQ^p=v$ for some $p\geqslant 1$. Applying the above decomposition, $v=v_1+\cdots +v_r$, where $v_i \in K_{\alpha_i}$, and the $Q$-invariance of $K_{\alpha_i}$, we get the alternative decomposition $v=vQ^p=v_1Q^p +\cdots +v_rQ^p$. So, $v_iQ^p=v_i$, i.e., $v_i(Q^p-I_m)=0$, for $i=1,\ldots ,r$. For a fixed $i$, distinguish the following two cases:
\begin{itemize}
\item[(i)] if $\alpha^p_i\neq 1$, then $\alpha_i$ is not a root of $x^p-1$ and so, $1=\gcd \big( (x-\alpha_i)^{s_i},\, x^p-1\big)$. By Bezout's equality, there are polynomials $a(x), b(x) \in \C[x]$ such that $1=(x-\alpha_i)^{s_i} a(x) +(x^p-1)b(x)$. Plugging the matrix $Q$ and multiplying by the vector $v_i$ on the left, we obtain $v_i=v_i(Q-\alpha_i I_m)^{s_i} a(Q)+v_i(Q^p-I_m)b(Q)=0$.
\item[(ii)] if $\alpha^p_i =1$, then $x-\alpha_i =\gcd \big( (x-\alpha_i)^{s_i}, x^p-1\big)$. By Bezout's equality, there are polynomials $a(x), b(x) \in \C[x]$ such that $x-\alpha_i =(x-\alpha_i)^{s_i} a(x) +(x^p-1)b(x)$. Now, plugging the matrix $Q$ and multiplying by the vector $v_i$ on the left, we have $v_i(Q-\alpha_i I_m)=v_i(Q-\alpha_i I_m)^{s_i} a(Q)+v_i(Q^p-I_m)b(Q)=0$. That is, $v_iQ=\alpha_i v_i$ and so, $v_iQ^{L_3}=\alpha_i^{L_3} v_i =v_i$.
\end{itemize}

Altogether, $v=v_1+\cdots +v_r =\sum_{i\,|\, \alpha_i^p =1} v_i$ and $vQ^{L_3}=\big( \sum_{i\,|\, \alpha_i^p =1} v_i \big) Q^{L_3}=\sum_{i\,|\, \alpha_i^p =1} v_i Q^{L_3} =\sum_{i\,|\, \alpha_i^p =1} v_i =v$, and $v\in \Fix Q^{L_3}$. This completes the proof that $\Per Q=\Fix Q^{L_3}$.
\end{proof}

\section{Finite order automorphisms of $\Z^m\times F_n$}\label{3}

A well-known (and deep) result by Bestvina--Handel~\cite{BH} establishes a uniform bound (in fact, the best possible) for the rank of the fixed subgroup of any automorphism of $F_n$: for every $\phi\in \Aut(F_n)$, $\rk(\Fix \phi)\leqslant n$. This result followed an interesting previously know particular case due to Dyer--Scott~\cite{DS}: if $\phi\in \Aut(F_n)$ is of finite order then $\Fix\phi$ is a free factor of $F_n$.

When we move to a free-abelian times free group, $G=\Z^m \times F_n$, the situation degenerates, but still preserving some structure. In Delgado--Ventura~\cite{DV}, the authors gave an example of an automorphism $\Psi\in \Aut(G)$ with $\Fix\Psi$ \emph{not} being finitely generated; so, there is no possible version of Bestvina--Handel result in $G$. Following the parallelism, we show below an example of an automorphism $\Psi \in \Aut(G)$ of finite order (in fact, of order 2) such that $\Fix \Psi$ is \emph{not} a factor of $G$; see Example~\ref{ex-order-2}. However, as a positive result, in Theorem~\ref{fo fg}(ii) below we prove that finite order automorphisms of $G$ do have finitely generated fixed subgroups, in fact with a computable uniform upper bound for its rank, in terms of $m$ and $n$.

\begin{lemma}\label{ff}
Let $G=\Z^m \times F_n$. For given finitely generated subgroups $H\leqslant_{fg} K\leqslant_{fg} G$, the following are equivalent:
\begin{itemize}
\item[(a)] every basis of $H$ extends to a basis of $K$;
\item[(b)] some basis of $H$ extends to a basis of $K$;
\item[(c)] $H\pi \leqslant_{ff} K\pi$ and $L_H\leqslant_{\oplus} L_K$.
\end{itemize}
\end{lemma}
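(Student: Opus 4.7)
The plan is to prove the cycle (a) $\Rightarrow$ (b) $\Rightarrow$ (c) $\Rightarrow$ (a). The implication (a) $\Rightarrow$ (b) is immediate since $H$ admits at least one basis by the Delgado--Ventura result cited above.

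For (b) $\Rightarrow$ (c), suppose a basis $\{t^{a_1}u_1, \ldots, t^{a_r}u_r,\, t^{b_1}, \ldots, t^{b_s}\}$ of $H$ extends to a basis $\{t^{a_1}u_1, \ldots, t^{a_{r'}}u_{r'},\, t^{b_1}, \ldots, t^{b_{s'}}\}$ of $K$. Unpacking the definition of basis: $\{u_1, \ldots, u_r\}$ is a free-basis of $H\pi$ and $\{u_1, \ldots, u_{r'}\}$ is a free-basis of $K\pi$, so $\{u_1, \ldots, u_r\}$ extends to a free-basis of $K\pi$, meaning $H\pi\leqslant_{ff} K\pi$. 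Similarly, $\{b_1, \ldots, b_s\}$ extends to the abelian basis $\{b_1, \ldots, b_{s'}\}$ of $L_K$, so $L_H\leqslant_{\oplus} L_K$.

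For (c) $\Rightarrow$ (a), take any basis $\{t^{a_1}u_1, \ldots, t^{a_r}u_r,\, t^{b_1}, \ldots, t^{b_s}\}$ of $H$. Using (c), extend $\{u_1, \ldots, u_r\}$ to a free-basis $\{u_1, \ldots, u_{r'}\}$ of $K\pi$ (standard property of free factors of free groups) and extend $\{b_1, \ldots, b_s\}$ to an abelian basis $\{b_1, \ldots, b_{s'}\}$ of $L_K$ (standard property of direct summands of finitely generated free-abelian groups). For $i=r+1, \ldots, r'$, since $K$ surjects onto $K\pi$ via $\pi$, pick any preimage $t^{a_i}u_i\in K$ of $u_i$. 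The candidate extended basis of $K$ is
$$
\{t^{a_1}u_1, \ldots, t^{a_{r'}}u_{r'},\, t^{b_1}, \ldots, t^{b_{s'}}\}.
$$

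What remains---and this is the one substantive step---is to verify this candidate set is genuinely a basis of $K$. The structural conditions hold by construction: $\{u_1, \ldots, u_{r'}\}$ is a free-basis of $K\pi$ and $\{b_1, \ldots, b_{s'}\}$ is an abelian basis of $L_K$. So I only need to show it generates $K$. Given an arbitrary $g=t^x w\in K$, write the word $w\in K\pi$ uniquely in the $u_i$'s, say $w=u_{i_1}^{\epsilon_1}\cdots u_{i_\ell}^{\epsilon_\ell}$; the corresponding product $(t^{a_{i_1}}u_{i_1})^{\epsilon_1}\cdots (t^{a_{i_\ell}}u_{i_\ell})^{\epsilon_\ell}$ evaluated inside $K$ has the form $t^y w$ for some $y\in\Z^m$. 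Then $g\cdot (t^y w)^{-1}=t^{x-y}\in K$, so $x-y\in L_K$, which is $\Z$-spanned by $\{b_1, \ldots, b_{s'}\}$. Combining both pieces expresses $g$ as a product of elements from the candidate set, finishing the argument. The main obstacle is really just this normal-form generation check; the rest is a direct unwrapping of the basis definition together with the classical extension properties for free factors and direct summands.
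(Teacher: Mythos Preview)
Your proof is correct and follows essentially the same route as the paper: the same cycle (a) $\Rightarrow$ (b) $\Rightarrow$ (c) $\Rightarrow$ (a), with the same construction in (c) $\Rightarrow$ (a) of lifting the extra free-basis elements $u_{r+1},\ldots,u_{r'}$ to preimages $t^{a_i}u_i\in K$. The only difference is that the paper simply asserts ``they generate $K$, and have the appropriate form'', whereas you spell out the generation check via the normal-form argument; this is a welcome extra detail rather than a different approach.
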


In this case, we say that $H$ is a \emph{factor} of $K$, denoted $H\leqslant_{f}K$; this is the notion in $G$ corresponding to \emph{free factor} in $F_n$ (denoted $\leqslant_{ff}$), and \emph{direct summand} in $\Z^m$ (denoted $\leqslant_{\oplus}$).

\begin{proof}
$(a)\,\Rightarrow \,(b)$ is obvious.

Assuming (b), we have $H=\langle t^{a_1}u_1, \ldots ,t^{a_r}u_r,\, t^{b_1},\ldots ,t^{b_s}\rangle$ and $K=\langle t^{a_1}u_1, \ldots ,t^{a_r}u_r, t^{a_{r+1}}u_{r+1},$ $\ldots ,t^{a_{r+p}}u_{r+p},\, t^{b_1},\ldots ,t^{b_s}, t^{b_{s+1}}, \ldots ,t^{b_{s+q}}\rangle$, where $\{u_1, \ldots ,u_r\}$ is a free-basis of $H\pi$, $\{b_1, \ldots ,b_s\}$ is an abelian-basis of $L_H$, $\{u_1, \ldots ,u_{r+p}\}$ is a free-basis of $K\pi$, and $\{b_1, \ldots ,b_{s+q}\}$ is an abelian-basis of $L_K$. Therefore, $H\pi \leqslant_{ff} K\pi$ and $L_H\leqslant_{\oplus} L_K$. This proves $(b)\,\Rightarrow \,(c)$.

Finally, assume (c). Given any basis $\{t^{a_1}u_1, \ldots ,t^{a_r}u_r,\, t^{b_1},\ldots ,t^{b_s}\}$ for $H$, $\{u_1, \ldots ,u_r\}$ is a free-basis of $H\pi$ (which can be extended to a free-basis $\{u_1, \ldots ,u_r, u_{r+1},\ldots ,u_{r+p}\}$ of $K\pi$ since $H\pi\leqslant_{ff}K\pi$); and  $\{b_1, \ldots ,b_s\}$ is an abelian-basis of $L_H$ (which can be extended to an abelian-basis $\{b_1, \ldots ,b_s, b_{s+1}, \ldots ,b_{s+q}\}$ of $L_K$ since $L_H\leqslant_{\oplus} L_K$). Then, choose vectors $a_{r+1}, \ldots ,a_{r+p}\in \Z^m$ such that $t^{a_{r+1}}u_{r+1}, \ldots ,t^{a_{r+p}}u_{r+p}\in K$ (this is always possible because $u_{r+1}, \ldots ,u_{r+p}\in K\pi$), and $\{ t^{a_1}u_1, \ldots ,t^{a_r}u_r, t^{a_{r+1}}u_{r+1},\ldots ,t^{a_{r+p}}u_{r+p},\, t^{b_1},\ldots ,t^{b_s}, t^{b_{s+1}}, \ldots ,t^{b_{s+q}}\}$ is a basis of $K$ (in fact, they generate $K$, and have the appropriate form). This proves $(c)\, \Rightarrow\, (a)$.
\end{proof}

\begin{thm}\label{fo fg}
Let $G=\Z^m \times F_n$, $m, n\geqslant 0$.
\begin{itemize}
\item[(i)] There exists a computable constant $C_1=C_1(m,n)$ such that, for every $\Psi\in \Aut(G)$ of finite order, $\ord(\Psi)\leqslant C_1$.
\item[(ii)] There exists a computable constant $C_2=C_2(m,n)$ such that, for every $\Psi\in \Aut(G)$ of finite order, $\rk(\Fix \Psi)\leqslant C_2$.
\end{itemize}
\end{thm}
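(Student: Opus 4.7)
My plan is to analyse an automorphism $\Psi = \Psi_{\phi, Q, P}$ component by component, applying Proposition~\ref{Q-finite-order} to the $\GL_m(\Z)$-piece $Q$, Dyer--Scott together with Nielsen--Schreier to the $F_n$-piece $\phi$, and a direct iteration argument to control the coupling matrix $P$.

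For~(i), iterating the composition formula yields $\Psi^k = \Psi_{\phi^k, Q^k, R_k}$ with $R_k = \sum_{i=0}^{k-1} A^i P Q^{k-1-i}$ (where $A$ denotes the abelianization matrix of $\phi$). Projecting $\Psi^k = \id_G$ to $F_n$ (resp., to the $Q$-block) forces $\phi^k = \id$ (resp., $Q^k = I_m$); conversely, writing $k_0 = \lcm(\ord \phi, \ord Q)$, one has $\Psi^{k_0} = \Psi_{\id, I_m, R_{k_0}}$, and since $\Psi_{\id, I_m, R}^r = \Psi_{\id, I_m, rR}$, the finiteness of $\ord \Psi$ forces $R_{k_0} = 0$. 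Hence $\ord \Psi = \lcm(\ord \phi, \ord Q)$. Combining $\ord Q \leqslant L_1(m)$ (Proposition~\ref{Q-finite-order}(i)) with a computable bound $N(n)$ on the order of any finite-order automorphism of $F_n$ (a classical fact, e.g.\ via realisation as graph isometries, Culler/Khramtsov) yields $C_1(m, n) = \lcm(L_1(m), N(n))$.

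For~(ii), the fixed-point equation reads directly: $t^a u \in \Fix \Psi$ iff $u \in \Fix \phi$ and $a(Q-I_m) = -u^{\rm ab} P$. This gives $L_{\Fix \Psi} = \ker(Q-I_m)$ and $(\Fix \Psi)\pi = H := \{u \in \Fix \phi : u^{\rm ab} P \in \im(Q-I_m)\}$, so that $\rk(\Fix \Psi) = \rk(H) + \rk(\ker(Q-I_m)) \leqslant \rk(H) + m$. To bound $\rk(H)$, I will first show that $[\Fix \phi : H] \leqslant L_2(m)$. For any $u \in \Fix \phi$, induction on $j$ gives
$$
\Psi^j(t^0 u) = t^{u^{\rm ab} P (I_m + Q + \cdots + Q^{j-1})}\, u,
$$
so $\Psi^{\ord \Psi} = \id$ forces $u^{\rm ab} P \in \ker\bigl(\sum_{i=0}^{\ord \Psi - 1} Q^i\bigr)$; using $\ord Q \mid \ord \Psi$ and torsion-freeness of $\Z^m$, this equals $\ker\bigl(\sum_{i=0}^{\ord Q - 1} Q^i\bigr)$, which by Proposition~\ref{Q-finite-order}(ii) contains $\im(Q-I_m)$ with index at most $L_2(m)$. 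The induced homomorphism $\Fix \phi \to \ker(\sum Q^i)/\im(Q-I_m)$, $u \mapsto u^{\rm ab} P + \im(Q-I_m)$, has kernel exactly $H$, so $[\Fix \phi : H] \leqslant L_2(m)$. Applying Dyer--Scott ($\rk(\Fix \phi) \leqslant n$) and Nielsen--Schreier then gives $\rk(H) \leqslant 1 + L_2(m)(n-1)$, whence $C_2(m, n) := 1 + L_2(m)(n-1) + m$ works.

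The main obstacle is precisely this finite-index step: a priori $H$ is the kernel of a map from a free group to the possibly infinite abelian group $\Z^m / \im(Q-I_m)$, and such kernels are routinely of infinite rank. What rescues us is that the finite-order hypothesis on $\Psi$, via the iteration above combined with Proposition~\ref{Q-finite-order}(ii), automatically constrains the values $u^{\rm ab} P$ (for $u \in \Fix \phi$) to lie in the finite-index supergroup $\ker(\sum Q^i)$ of $\im(Q-I_m)$, so that the quotient $\Fix \phi / H$ is forced to be finite.
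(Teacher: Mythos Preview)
Your proof is correct and follows essentially the same strategy as the paper's: bound $\ord\phi$ and $\ord Q$ separately for~(i), and for~(ii) use Proposition~\ref{Q-finite-order}(ii) to show $[\Fix\phi:(\Fix\Psi)\pi]\leqslant L_2(m)$, then apply the Schreier index formula together with $\rk(\Fix\phi)\leqslant n$.

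There are two minor differences worth noting. First, you omit the degenerate cases $n\leqslant 1$ (and $m=0$); since the description $\Psi=\Psi_{\phi,Q,P}$ is only valid for $n\geqslant 2$, these need a word (they are trivial: $G$ is free-abelian, resp.\ free). Second, in part~(ii) your route to the key inclusion $u^{\rm ab}P\in\ker\bigl(\sum_{i=0}^{\ord Q-1}Q^i\bigr)$ is slightly more direct than the paper's: you iterate $\Psi$ explicitly on $t^0u$ (using $u\in\Fix\phi$) and reduce the $\ord\Psi$-sum to the $\ord Q$-sum via torsion-freeness, whereas the paper invokes the direct-sum decomposition of Lemma~\ref{directsum} on $u^{\rm ab}P$ and uses $P_k=0$. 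Your phrasing of the index bound as ``the homomorphism $\Fix\phi\to\ker(\sum Q^i)/\im(Q-I_m)$ has kernel $H$'' is equivalent to the paper's ``intersect $M\leqslant_{fi}\ker(\sum Q^i)$ with $\im P'$'', but packages it neatly and bypasses the diagram~\eqref{15}. Both arguments arrive at the same constant $C_2(m,n)=L_2(m)(n-1)+m+1$.
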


\begin{proof}
(i). By Proposition~\ref{Q-finite-order}(i), the set $\{\ord(Q) \mid Q\in \GL_m(\Z) \mbox{ of finite order}\}$ is bounded above by a computable constant $L_1(m)$. And by Lyndon--Schupp~\cite[Cor.~I.4.15]{LS}, $\{\ord(\phi) \mid \phi\in \Aut(F_n)\mbox{ of finite order}\} \subseteq \{\ord(Q) \mid Q\in \GL_n(\Z) \mbox{ of finite order}\}$, which is bounded above by $L_1(n)$.

If $n\leqslant 1$ then $G=\Z^{m+n}$ is free-abelian and the constant $C_1=L_1(m+n)$ makes the job; if $m=0$ then $G=F_n$ is free and the constant $C_1=L_1(n)$ makes the job.

So, suppose $m\geqslant 1$, $n\geqslant 2$, and take an automorphism $\Psi=\Psi_{\phi,Q,P}\in \Aut(G)$. By Delgado--Ventura~\cite[Lemma~5.4(ii)]{DV}, $\Psi_{\phi,Q,P}^k=\Psi_{\phi^k, Q^k, P_k}$, where $P_k=\sum_{i=0}^{k-1} A^iPQ^{k-1-i}$ and $A\in \GL_n(\Z)$ is the abelianization of $\phi$. In particular, if $\Psi$ is of finite order then $\phi$ and $Q$ are so too; furthermore, $\ord(\Psi)=\lambda r_3$, where $r_3=\lcm (r_1, r_2)$, $r_1=\ord(\phi)$, and $r_2=\ord(Q)$. But $\Psi^{r_3}=\Psi_{id, id, P_{r_3}}$ and $\Psi^{\lambda r_3}=(\Psi_{id, id, P_{r_3}})^{\lambda}=\Psi_{id, id, \lambda P_{r_3}}$. Hence, $\Psi$ is either of order $r_3$ or of infinite order. In other words, $\{\ord(\Psi) \mid \Psi\in \Aut(G) \mbox{ of finite order}\}\subseteq \{\lcm (\ord(\phi),\, \ord(Q)) \mid \phi\in \Aut(F_n),\, Q\in \GL_m(\Z), \mbox{ both of finite order}\}$, which is bounded above by the constant $C_1(m,n)=L_1(n)L_1(m)$.

(ii). If $n\leqslant 1$ then $C_2=m+n$ makes the job, if $m=0$ then $C_2=n$ makes the job.

So, suppose $m\geqslant 1$, $n\geqslant 2$. Delgado--Ventura~\cite[\S 6]{DV} discusses the form of the fixed subgroup of a general automorphism $\Psi_{\phi,Q,P} \in \Aut(G)$, namely, $L_{\Fix \Psi}=\Fix(Q)=E_1(Q)$ (the eigenspace of eigenvalue 1 for $Q$), and $(\Fix \Psi)\pi =NP'^{-1}\rho'^{-1}$, where $\rho\colon F_n \twoheadrightarrow \Z^n$ is the abelianization map, $\rho'$ is its restriction to $\Fix \phi$, $P'$ is the restriction of $P$ to $\im \rho'$, $M=\im(Q-I_m)$, $N=M\cap \im P'$, and $(\Fix \Psi) \pi =N P'^{-1} \rho'^{-1}\unlhd \Fix \phi \leqslant F_n$, see the following diagram,

\begin{equation}\index{punts fixes per un automorfisme!diagrames}\label{15}
 \xy
 (65,0)*+{\geqslant M = \operatorname{Im} ({Q}-{I_m})};
 (58,-18.1)*+{ = M \cap \im {{P'}} .};
 (0,-4.5)*+{\rotatebox[origin=c]{90}{$\leqslant$}};
 (24,-4.5)*+{\rotatebox[origin=c]{90}{$\unlhd$}};
 (46,-4.5)*+{\rotatebox[origin=c]{90}{$\unlhd$}};
 {\ar@{->>}^-{\rho} (0,0)*++{F_{n}}; (24,0)*++{\ZZ^{n}}};
 {\ar^-{{P}} (24,0)*++++{}; (46,0)*++{\ZZ^m}};
 {\ar@{->>}^-{\rho'} (0,-9)*++{\Fix \phi}; (24,-9)*++{\im \rho'}};
 {\ar@{->>}^-{{P'}} (24,-9)*+++++{}; (46,-9)*++{\im {P'}}};
 (0,-13.5)*+{\rotatebox[origin=c]{90}{$\unlhd$}};
 (24,-13.5)*+{\rotatebox[origin=c]{90}{$\unlhd$}};
 (46,-13.5)*+{\rotatebox[origin=c]{90}{$\unlhd$}};
 {\ar@(ur,ul)_{Q-I_m} (46,0)*++{}; (46,0)*++{} };
 {\ar@{|->} (46,-18)*+++{N }; (24,-18)*++{NP'^{-1}}};
 {\ar@{|->} (24,-18)*+++++++{}; (0,-18)*+{NP'^{-1} \rho'^{-1} }};
 (-17,-18)*+{(\Fix \Psi) \pi =};
 \endxy
\end{equation}

If $\Fix \phi$ is trivial or cyclic, then $\rk (\Fix \Psi)=\rk((\Fix\Psi)\pi)+\rk(E_1(Q))\leqslant 1+m$. So, taking $C_2(m,n)\geqslant 1+m$, we are reduced to the case $\rk(\Fix \phi)\geqslant 2$.

With this assumption, $(\Fix \Psi)\pi \neq 1$ (it always contains the commutator of $\Fix\phi$) and so, $\Fix \Psi \leqslant G$ is finitely generated if and only if $(\Fix \Psi)\pi \leqslant F_n$ is so, which is if and only if the index $\ell:=[\Fix \phi : (\Fix \Psi)\pi ]=[\Fix \phi : NP'^{-1}\rho'^{-1}]=[\im \rho' : NP'^{-1}]=[\im P' : N]$ is finite. In this case, by the Schreier index formula, $\rrk(\Fix \Psi)=\rrk((\Fix\Psi)\pi) +\rk(E_1(Q))\leqslant \ell \rrk(\Fix \phi)+m\leqslant \ell(n-1)+m$. Therefore, we are reduced to bound the index $\ell$ in terms of $n$ and $m$.

First, let us prove that $\Psi$ being of finite order implies $\ell=[\im P' : N]<\infty$.

Put $k=\ord(\Psi_{\phi, Q, P})$ so, $\phi^k=\id$, $Q^k=I_m$, and $P_k =\sum_{i=0}^{k-1} A^iPQ^{k-1-i}=0$, where $A\in \GL_n(\Z)$ is the abelianization of $\phi$. By Proposition~\ref{Q-finite-order}(ii), the subgroup $M=\im(Q-I_m)$ is a finite index subgroup of $\ker (Q^{k-1}+\cdots +Q+I_m)$, with the index bounded above by a computable constant depending only on $m$, $[\ker (Q^{k-1}+\cdots +Q+I_m) : M]\leqslant L_2(m)$.

We claim that $\im P'\leqslant \ker (Q^{k-1}+\cdots +Q+I_m)$. In fact, take $u\in \Fix \phi$, note that $u\phi=u$ and so $(u\rho')A=u\phi \rho'=u\rho'$, and split $(u\rho')P' =v_1+v_2$, with $v_1 \in \ker (Q-I_m)$ and $v_2 \in \ker (Q^{k-1}+ \cdots +Q+I_m)$; see Lemma~\ref{directsum}. Multiplying by $Q^{k-1}+ \cdots +Q+I_m$ on the right,
 $$
v_1(Q^{k-1}+ \cdots +Q+I_m) = (v_1+v_2)(Q^{k-1}+ \cdots +Q+I_m) = (u\rho') P'(Q^{k-1}+ \cdots +Q+I_m) =
 $$
 $$
=\sum_{i=0}^{k-1} (u\rho')PQ^{k-1-i} = \sum_{i=0}^{k-1} (u\rho')A^iPQ^{k-1-i} = (u\rho') \sum_{i=0}^{k-1} A^iPQ^{k-1-i} = (u\rho')P_k=0,
 $$
from which we deduce $v_1\in \ker (Q-I_m) \cap \ker (Q^{k-1}+ \cdots +Q+I_m)=\{ 0\}$ so, $(u\rho')P'=v_2 \in \ker (Q^{k-1}+ \cdots +Q+I_m)$. Therefore, $\im P'\leqslant \ker (Q^{k-1}+ \cdots +Q+I_m)$.

Finally, intersecting the inclusion $M\leqslant_{fi} \ker (Q^{k-1}+ \cdots +Q+I_m)$ with $\im P'$, we get $N=M\cap \im P' \leqslant_{fi} \im P'$, and $\ell=[\im P' : N]\leqslant [\ker (Q^{k-1}+ \cdots +Q+I_m) : M]\leqslant L_2(m)$. Hence, taking $C_2(m,n)\geqslant L_2(m)(n-1)+m$ will suffice for the present case.

Therefore, $C_2(m,n)=L_2(m)(n-1)+m+1$ serves as the upper bound claimed in (ii).
\end{proof}

\begin{ex}\label{ex-order-2}
Here is an example of an order 2 automorphism of $G=\Z^2\times F_3$ whose fixed subgroup \emph{is not} a factor of $G$. Consider the automorphism $\Psi_{\phi, Q, P}$ determined by $\phi\colon F_3 \to F_3$, $z_1\mapsto z_1^{-1}$, $z_2\mapsto z_2$, $z_3\mapsto z_3$, $Q=\left(\begin{smallmatrix} 1 & 0 \\ 0 & -1 \end{smallmatrix}\right)\in \GL_2(\Z)$, and $P=\left( \begin{smallmatrix} 1 & 0 \\ 0 & 1 \\ 0 & 2 \end{smallmatrix} \right) \in M_{3\times 2}(\Z)$, i.e.,
 $$
\begin{array}{rcl}
\Psi \colon \Z^2\times F_3 & \longrightarrow & \Z^2\times F_3 \\
z_1 & \longmapsto & t^{(1,0)}z_1^{-1}\\
z_2 & \longmapsto & t^{(0,1)}z_2 \\
z_3 & \longmapsto & t^{(0,2)}z_3 \\
t^{(1,0)} & \longmapsto & t^{(1,0)} \\
t^{(0,1)} & \longmapsto & t^{(0,-1)}.
\end{array}
 $$
An easy computation shows that $\Psi^2=\id$, i.e., $\Psi$ has order 2. To compute $\Fix \Psi$, let us follow diagram~\eqref{15}: first note that $\Fix \phi = \langle z_2,z_3 \rangle$; so, $\im \rho' = \langle (0,1,0),(0,0,1) \rangle$, $\im P'=\langle (0,1),(0,2) \rangle=\langle (0,1) \rangle$. On the other hand, $M=\langle (0,2) \rangle$, $N=\langle (0,2)\rangle$, and $N{P'}^{-1}= \langle (0,2,0), (0,0,1) \rangle$. Therefore, $(\Fix \Psi)\pi =N{P'}^{-1}{\rho'}^{-1}=\{w(z_2,z_3) \mid |w|_{z_2} \text{ even}\}=\langle z_2^2,z_3, z_2^{-1}z_3z_2 \rangle$. So, solving the systems of equations to compute the vectors associated with each element of the free part, we obtain that $t^{(0,1)}z_2^2,\, t^{(0,1)}z_3,\, t^{(0,1)}z_2^{-1}z_3z_2 \in \Fix \Psi$. Finally, since $(\Fix \Psi)\cap \Z^2=E_1(Q)=\langle (1,0) \rangle$, we deduce that $\Fix \Psi = \langle t^{(0,1)}z_2^2,\, t^{(0,1)}z_3,\, t^{(0,1)}z_2^{-1}z_3z_2,\, t^{(1,0)} \rangle$.

Since $H\pi =\langle z_2^2, z_3, z_2^{-1}z_3z_2\rangle$ is not a free factor of $F_3$, $\Fix \Psi$ is not a \emph{factor} of $\Z^2\times F_3$; see Lemma~\ref{ff}.
\end{ex}

Theorem~\ref{fo fg} has the following easy corollary:

\begin{cor}\label{aa}
Let $\Psi \in \End(\Z^m \times F_n)$. If $\Fix \Psi^p$ is finitely generated then $\Fix \Psi$ is also finitely generated; the converse is not true.
\end{cor}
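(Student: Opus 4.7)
The plan is to handle the two halves separately. For the forward implication, the key point is that $\Fix\Psi \leqslant \Fix\Psi^p$ (anything fixed by $\Psi$ is fixed by $\Psi^p$), so $\Fix\Psi$ can be reinterpreted as the fixed subgroup of the restriction of $\Psi$ to $\Fix\Psi^p$. First I would verify this restriction is well defined: if $g\Psi^p = g$ then $(g\Psi)\Psi^p = g\Psi^p\Psi = g\Psi$, so $\Psi$ maps $\Fix\Psi^p$ into itself; moreover $(\Psi|_{\Fix\Psi^p})^p = \id$, so the restriction is in fact an \emph{automorphism} of $\Fix\Psi^p$ of finite order dividing $p$. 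Next, since $\Fix\Psi^p$ is assumed finitely generated, the subgroup structure theorem recalled from~\cite{DV} gives $\Fix\Psi^p \simeq \Z^{m'} \times F_{n'}$ with $m' \leqslant m$ and $n' < \infty$. Applying Theorem~\ref{fo fg}(ii) to the finite-order automorphism $\Psi|_{\Fix\Psi^p}$ of this smaller free-abelian times free group yields finite generation of its fixed subgroup; but that fixed subgroup is exactly $\Fix\Psi$, finishing this direction.

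For the converse, I would exhibit a concrete counterexample in $G=\Z\times F_2$: the automorphism $\Psi = \Psi_{\phi, 1, P}$ determined by $z_1\mapsto z_1^{-1}$, $z_2\mapsto tz_2$, $t\mapsto t$ (so $\phi$ inverts $z_1$ and fixes $z_2$, and $P = \bigl(\begin{smallmatrix} 0\\1 \end{smallmatrix}\bigr)$). A short computation shows that $\Psi(t^a w) = t^a w$ forces simultaneously that the $z_2$-exponent sum of $w$ vanishes and that $\phi(w)=w$; since $\Fix\phi=\langle z_2\rangle$, these two conditions together pin down $w=1$, so $\Fix\Psi=\langle t\rangle$ is infinite cyclic. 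On the other hand, formula~\eqref{.} gives $\Psi^2 = \Psi_{\id, 1, (0,2)^T}$, that is, $\Psi^2(z_1)=z_1$, $\Psi^2(z_2)=t^2z_2$, $\Psi^2(t)=t$; hence $\Fix\Psi^2 = \Z\times K$, where $K\leqslant F_2$ is the kernel of the $z_2$-exponent sum map $F_2\to\Z$. Since $K$ is the normal closure of $z_1$ in $F_2$, it is free of infinite rank, and therefore $\Fix\Psi^2$ is not finitely generated.

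There is no substantial technical obstacle: the forward direction is essentially a packaging of Theorem~\ref{fo fg}(ii), once one has placed the finite-order automorphism inside the correct ambient group, and the counterexample requires only elementary verification. The one point deserving brief care is the observation that $\Psi|_{\Fix\Psi^p}$ is a genuine \emph{automorphism} (not merely an endomorphism) of $\Fix\Psi^p$, which is precisely what legitimises the invocation of Theorem~\ref{fo fg}(ii).
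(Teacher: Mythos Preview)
Your forward implication is essentially identical to the paper's: restrict $\Psi$ to the finitely generated subgroup $\Fix\Psi^p\simeq\Z^{m'}\times F_{n'}$, observe the restriction is a finite-order automorphism, and invoke Theorem~\ref{fo fg}(ii). Your explicit verification that $\Psi$ preserves $\Fix\Psi^p$ and that $(\Psi|)^p=\id$ forces bijectivity is a welcome addition that the paper leaves implicit.

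Your counterexample is different from the paper's but equally valid. The paper uses $\Psi\colon z_1\mapsto tz_1^{-1}$, $z_2\mapsto z_2^{-1}$, $t\mapsto t^{-1}$, obtaining $\Fix\Psi=\{1\}$ and $\Fix\Psi^2=\langle t\rangle\times\langle\langle z_2\rangle\rangle$. Your example keeps $Q=1$ and shifts the ``bad'' $P$-entry to the $z_2$-row, yielding $\Fix\Psi=\langle t\rangle$ and $\Fix\Psi^2=\Z\times\langle\langle z_1\rangle\rangle$. Both exploit the same mechanism: after squaring, $\phi$ becomes the identity so $(\Fix\Psi^2)\pi$ jumps from a cyclic group to an infinite-rank normal subgroup determined by a single exponent-sum condition. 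Your choice has the minor cosmetic advantage that $Q$ is trivial throughout, so all the action is visibly carried by $P$; the paper's choice has the advantage that $\Fix\Psi$ is trivial rather than infinite cyclic. Neither approach buys anything substantive over the other.
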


\begin{proof}
Clearly, $\Psi$ restricts to an automorphism $\Psi_| \in \Aut(\Fix \Psi^p)$ such that $\Fix \Psi_| =\Fix \Psi$ and $(\Psi_|)^p=\id$. Since $\Fix \Psi^p$ is finitely generated, we have $\Fix \Psi^p \simeq \Z^{m'}\times F_{n'}$ for some $m'\leqslant m$ and $n'<\infty$ and, applying Theorem~\ref{fo fg}(ii), we get $\rk(\Fix \Psi)=\rk(\Fix \Psi_|)<\infty$ (in fact, bounded above by $C_2(m',n')$).

The converse is not true as the following example shows. Consider $\Psi \colon \Z \times F_2 \to \Z \times F_2$, $z_1\mapsto tz_1^{-1}$, $z_2\mapsto z_2^{-1}$, $t\mapsto t^{-1}$. It is straightforward to see that $\Fix \Psi ={1}$. But $\Psi^2 \colon \Z \times F_2 \to \Z \times F_2$, $z_1\mapsto t^{-2}z_1$, $z_2\mapsto z_2$, $t\mapsto t$ and so, $\Fix \Psi^2 = \langle t \rangle \times \{ w(z_1,z_2) \in F_2 \mid |w|_{z_1} =0 \}=\langle t \rangle \times \langle \langle z_2 \rangle \rangle$ is not finitely generated.
\end{proof}

\section{Periodic points of endomorphisms of $\Z^m\times F_n$}\label{4}

Corollary~\ref{aa} states that, for $\Psi\in \Aut(G)$, the lattice of fixed subgroups of powers of $\Psi$ could simultaneously contain finitely and non-finitely generated subgroups but, as soon as one of them is finitely generated, the smaller ones must be so.

In the abelian case $G=\Z^m$, this lattice of fixed subgroups is always finite, and coming from a set of exponents uniformly bounded by $m$; this is precisely the contents of Proposition~\ref{periodic-abelian}. In the free case, combining results from Bestvina--Handel, Culler, Imrich--Turner, and Stallings, the exact analogous statement is true:

\begin{prop}[{Bestvina--Handel--Culler--Imrich--Turner--Stallings~\cite{BH, Cu, IT, St}; see also~\cite[Prop.~3.1]{BMMV}}]\label{periodic-free}
For every $\phi\in \End(F_n)$, we have $\Per \phi=\Fix \phi^{(6n-6)!}$.
\end{prop}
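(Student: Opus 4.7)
The plan is to reduce the a priori infinite ascending union $\Per\phi=\bigcup_{p\geqslant 1}\Fix\phi^p$ to a single term $\Fix\phi^N$ for a universal exponent $N=N(n)$, and then to pin down the value $N=(6n-6)!$ via a combinatorial graph-realization argument. Throughout, I use the paper's right-action convention $x\mapsto x\phi$.

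First, for each $p\geqslant 1$, the power $\phi^p$ is again an endomorphism of $F_n$, so by the combined theorems of Stallings (finite generation) and Imrich--Turner together with Bestvina--Handel (rank bound), $\Fix\phi^p$ is a free group of rank at most $n$. Next I would check that $\phi$ restricts to a well-defined automorphism of $\Fix\phi^p$ of order dividing $p$: if $x\phi^p=x$, then $(x\phi)\phi^p=(x\phi^p)\phi=x\phi$, so $(\Fix\phi^p)\phi\subseteq \Fix\phi^p$; and $\phi^p|_{\Fix\phi^p}=\id$ forces the restriction to be bijective with inverse $\phi^{p-1}|_{\Fix\phi^p}$. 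Writing $d_p=\ord(\phi|_{\Fix\phi^p})$, one gets $d_p\mid p$ and the key identity $\Fix\phi^p=\Fix\phi^{d_p}$, which reduces the whole proposition to bounding $d_p$ uniformly in $p$.

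The crux is this uniform bound. By Culler's realization theorem, every finite-order automorphism of a free group $F_{n'}$ can be realised as a simplicial automorphism of some finite graph $\Gamma$ with $\pi_1(\Gamma)\simeq F_{n'}$; by iteratively collapsing valence-two vertices we may assume every vertex of $\Gamma$ has valence $\geqslant 3$, so $2E(\Gamma)\geqslant 3V(\Gamma)$, and combined with $E(\Gamma)-V(\Gamma)=n'-1\leqslant n-1$ this gives $E(\Gamma)\leqslant 3(n-1)$ and hence at most $6n-6$ half-edges. The induced action on the set of half-edges is faithful (otherwise the induced automorphism on $\pi_1(\Gamma)$ would be trivial), so $d_p$ embeds into the symmetric group on $6n-6$ letters, whence $d_p\mid (6n-6)!$.

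Setting $N=(6n-6)!$, every $\Fix\phi^p=\Fix\phi^{d_p}$ sits inside $\Fix\phi^N$, giving $\Per\phi\leqslant \Fix\phi^N$; the reverse inclusion is immediate from the definition of $\Per$. The main obstacle is the Culler realization step: producing a graph model of controlled combinatorial size so that a single factorial bound applies uniformly across all $n'\leqslant n$ and across all orders $d_p$ that can occur. Everything else is bookkeeping, once the Imrich--Turner--Bestvina--Handel rank bound has been invoked to guarantee that each $\Fix\phi^p$ is a free group of rank at most $n$.
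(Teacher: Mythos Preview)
Your argument is correct in outline and reaches the same conclusion as the paper, but it is organised differently. The paper first passes to the stable image $F_n\phi^{\infty}=\bigcap_p F_n\phi^p$ (Imrich--Turner), where $\phi$ restricts to an \emph{automorphism} $\phi_1$; then invokes Stallings' result that for an automorphism there is a single $s$ with $\Per\phi_1=\Fix\phi_1^s$; then restricts once more to $\Per\phi_1$, obtaining a finite-order automorphism $\phi_2$ of a free group of rank $\leqslant n$ (Bestvina--Handel), and finally bounds $\ord(\phi_2)=s$ by Culler's $(6n-6)!$. You instead bypass both the stable image and Stallings entirely: for each $p$ you restrict $\phi$ directly to $\Fix\phi^p$ (rank $\leqslant n$ by Imrich--Turner plus Bestvina--Handel applied to the endomorphism $\phi^p$), obtain a finite-order automorphism of order $d_p\mid p$, and use the clean identity $\Fix\phi^p=\Fix\phi^{d_p}$ to reduce to bounding $d_p$. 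This buys you a shorter list of black boxes (no Stallings), at the cost of doing the restriction ``once per $p$'' rather than once globally; the paper's route makes the existence of a single stabilising exponent conceptually clearer.

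One point deserves tightening. Culler's realisation theorem is a statement about finite subgroups of $\Out(F_{n'})$, not $\Aut(F_{n'})$: the graph automorphism realises the \emph{outer} class, and the induced map on $\pi_1(\Gamma)$ is only well defined up to conjugation unless a basepoint is fixed. The paper handles this by observing that the kernel $\Inn(F_{n'})$ of $\Aut(F_{n'})\twoheadrightarrow\Out(F_{n'})$ is torsion-free (for $n'\geqslant 2$), so a finite-order $\psi\in\Aut(F_{n'})$ has the same order as its outer class; you should insert this sentence before invoking the half-edge count. Two smaller remarks: the attribution of ``finite generation'' of $\Fix\phi^p$ to Stallings is off (this is Gersten for automorphisms, and for endomorphisms it is the Imrich--Turner reduction plus Bestvina--Handel that you already cite); and the phrase ``iteratively collapsing valence-two vertices'' should be replaced by noting that Culler's graph may be taken reduced from the outset, since an after-the-fact collapse need not be equivariant.
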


\begin{proof}
Culler~\cite{Cu} proved that every finite order element in $\Out (F_n)$ has order dividing $(6n-6)!$; and the same is true in $\Aut(F_n)$ since the natural map $\Aut(F_n)\twoheadrightarrow \Out(F_n)$ has torsion-free kernel. On the other hand Stallings~\cite{St} proved that, for every $\phi \in \Aut(F_n)$, there exists $s\geqslant 0$ such that $\Per \phi =\Fix \phi^s$. Also, Imrich--Turner~\cite{IT} proved that the so-called stable image of an endomorphism $\phi\in \End(F_n)$, namely $F\phi^{\infty}=\cap_{p=1}^{\infty} F_n\phi^p$, has rank at most $n$, it is $\phi$-invariant, it contains $\Per \phi$, and the restriction $\phi_|\colon F\phi^{\infty}\to F_n\phi^{\infty}$ is bijective. Finally, Bestvina--Handel Theorem (see~\cite{BH}) estates that $\rk(\Fix \phi)\leqslant n$, for any $\phi\in \Aut(F_n)$.

Combining these four results we can easily deduce the statement: given an endomorphism $\phi\colon F_n \to F_n$, consider its restrictions $\phi_1\colon F_n\phi^{\infty}\to F_n\phi^{\infty}$ and $\phi_2\colon \Per \phi_1 \to \Per \phi_1$, both bijective; furthermore, $\Per \phi_2 =\Per \phi_1 =\Fix \phi^s_1$ (assume $s\geqslant 0$ minimal possible), $\rk(\Per \phi_1)\leqslant \rk(F\phi^{\infty})\leqslant n$, and $\phi_2$ has order $s$. Therefore, $s$ divides $(6\rk(\Per\phi_1)-6)!$ and so $(6n-6)!$ as well. We conclude that $\Per \phi =\Per \phi_1 =\Fix \phi_1^s =\Fix \phi^s \leqslant \Fix \phi^{(6n-6)!}\leqslant \Per \phi$ and so, $\Per \phi =\Fix \phi^{(6n-6)!}$.
\end{proof}

\begin{rem}
Modulo missing details, this fact was implicitly contained in an older result by M. Takahasi, who proved that an ascending chain of subgroups of a free group, with rank uniformly bounded above by a fixed constant (like the $\Fix \psi^p$'s), must stabilize; see~\cite[p.~114]{LS}.
\end{rem}

We close the present section by extending this same result to the context of free-abelian times free groups.

\begin{thm}\label{periodic-fatf}
There exists a computable constant $C_3=C_3(m,n)$ such that $\Per \Psi=\Fix \Psi^{C_3}$, for every $\Psi\in \End(\Z^m\times F_n)$.
\end{thm}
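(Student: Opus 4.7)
The plan is to extend the two periodicity results already available (Proposition \ref{periodic-abelian} in $\Z^m$ and Proposition \ref{periodic-free} in $F_n$) to the product $G = \Z^m \times F_n$. The degenerate cases are immediate: if $n \leq 1$ then $G$ is free-abelian of rank $m+n$ and Proposition \ref{periodic-abelian} yields $C_3 = L_3(m+n)$, while if $m = 0$ then $G = F_n$ and Proposition \ref{periodic-free} yields $C_3 = (6n-6)!$. So I assume $m \geq 1,\, n \geq 2$, in which case $\Z^m$ is the center of $G$.

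The main subtlety is that a general endomorphism $\Psi \in \End(G)$ need not preserve the central subgroup $\Z^m$, so one cannot directly parametrize $\Psi$ in the form $\Psi_{\phi, Q, P}$ used for automorphisms. I would split into two cases: \emph{(A)} $\Z^m \Psi \subseteq \Z^m$; and \emph{(B)} $\Z^m \Psi \not\subseteq \Z^m$. In Case (B), the abelian image $\Z^m \Psi$ lies in some $\Z^m \times \langle w \rangle$ with $1 \neq w \in F_n$; since each $z_j\Psi$ must centralize $\Z^m\Psi$, and the centralizer in $F_n$ of any non-trivial power of $w$ is the cyclic subgroup generated by the primitive root $v$ of $w$, the entire image $G\Psi$ collapses into the abelian subgroup $\Z^m \times \langle v \rangle \cong \Z^{m'}$ with $m' \leq m+1$. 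Since $\Per \Psi \subseteq G\Psi$ and $\Psi$ restricts to a $\Z$-linear endomorphism of this abelian subgroup, Proposition \ref{periodic-abelian} gives $\Per \Psi = \Fix \Psi^{L_3(m')} \subseteq \Fix \Psi^{L_3(m+1)}$.

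Case (A) is the main one. Here $\Psi = \Psi_{\phi, Q, P}$ for some $\phi \in \End(F_n)$, $Q \in \Mat_m(\Z)$ and $P \in \Mat_{n \times m}(\Z)$, and the iteration formula $\Psi^k = \Psi_{\phi^k, Q^k, P_k}$ with $P_k = \sum_{i=0}^{k-1} A^i P Q^{k-1-i}$ remains valid (the same computation used in the proof of Theorem \ref{fo fg} is purely formal and does not require invertibility). For any $g = t^a u \in \Per \Psi$, projection to $F_n$ gives $u \in \Per \phi$, so Proposition \ref{periodic-free} yields $u \in \Fix \phi^{k_0}$ with $k_0 := (6n-6)!$. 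Passing to $\Psi^{k_0}$ (which has the same periodic subgroup as $\Psi$), its action on the set $\{t^a u : a \in \Z^m,\, u \in \Fix \phi^{k_0}\}$ fixes the free part and acts on the abelian fibre by the affine map $a \mapsto aR + c_u$, where $R = Q^{k_0}$ and $c_u = u^{\rm ab} P_{k_0}$. I would linearize via the $(m+1) \times (m+1)$ integer matrix
\[
\tilde R_u = \begin{pmatrix} R & 0 \\ c_u & 1 \end{pmatrix},
\]
so that $g \in \Fix \Psi^{k_0 j}$ if and only if $(a, 1) \in \Fix \tilde R_u^{\,j}$. Applying Proposition \ref{periodic-abelian} to $\tilde R_u$ gives $\Per \tilde R_u = \Fix \tilde R_u^{L_3(m+1)}$, where crucially the exponent $L_3(m+1)$ depends only on the matrix dimension, not on $u$. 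Hence $\Per \Psi \subseteq \Fix \Psi^{k_0 L_3(m+1)}$.

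Combining the two cases, the uniform choice $C_3(m, n) = (6n-6)! \cdot L_3(m+1)$ works. The main conceptual obstacle is the asymmetric case split forced by endomorphisms not respecting the central decomposition; after that, the key technical step is the linearization trick that absorbs the $u$-dependent translation $c_u$ into a single matrix of dimension independent of $u$, so that the uniform bound from Proposition \ref{periodic-abelian} can be applied fibrewise.
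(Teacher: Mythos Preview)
Your argument is correct, and your case split (whether or not $\Z^m$ is $\Psi$-invariant) coincides with the paper's split into Delgado--Ventura's two endomorphism types; your Case~(B) is handled exactly as the paper handles the ``second type''. The genuine difference lies in Case~(A).

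For $\Psi=\Psi_{\phi,Q,P}$ the paper sets $C_3=\lcm\big(L_3(m),(6n-6)!\big)$ and proves directly that $\Fix\Psi^{\lambda C_3}\leqslant \Fix\Psi^{C_3}$ for every $\lambda\in\N$: it expands $P_{\lambda C_3}=\sum_{j=0}^{\lambda-1}(A^{C_3})^j P_{C_3}(Q^{C_3})^{\lambda-1-j}$, uses $u^{\rm ab}A^{C_3}=u^{\rm ab}$ to show that $a(I_m-Q^{C_3})-u^{\rm ab}P_{C_3}$ lies in $\ker\big(I_m+Q^{C_3}+\cdots+Q^{(\lambda-1)C_3}\big)$, observes it also lies in $\ker(I_m-Q^{C_3})$ via $\Per Q=\Fix Q^{C_3}$, and concludes it vanishes by a Bezout coprimality argument. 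Your approach instead freezes the free coordinate after passing to $\Psi^{k_0}$ (with $k_0=(6n-6)!$) and encodes the resulting affine map $a\mapsto aQ^{k_0}+u^{\rm ab}P_{k_0}$ on the $\Z^m$-fibre as the linear action of the $(m{+}1)\times(m{+}1)$ integer matrix $\tilde R_u$, so that Proposition~\ref{periodic-abelian} applied to $\tilde R_u$ gives the uniform exponent $L_3(m{+}1)$ independently of $u$. This is a clean reduction of the free-abelian-times-free case entirely back to the abelian case, bypassing the explicit $P_{\lambda C_3}$ expansion and the Bezout step; the trade-off is a slightly larger constant ($(6n-6)!\cdot L_3(m{+}1)$ versus the paper's $\lcm\big(L_3(m),L_3(m{+}1),(6n-6)!\big)$), which is immaterial for the statement.
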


\begin{proof}
Delgado--Ventura~\cite[Prop.~5.1]{DV} gave a classification of all endomorphisms of $G=\Z^m\times F_n$ in two types. For those of the second type, say $\Psi_{z,l,h,Q,P}$ (see~\cite{DV} for the notation), it is clear that the subgroup $\langle z, \Z^m\rangle\leqslant \Z^m\times F_n$ is invariant under $\Psi$ (denote $\Psi_| \colon \langle z, \Z^m\rangle \to \langle z, \Z^m\rangle$ its restriction), and it contains $\im \Psi$. Therefore, by Proposition~\ref{periodic-abelian}, $\Per\Psi =\Per\Psi_| =\Fix (\Psi_|)^{L_3(m+1)} =\Fix \Psi^{L_3(m+1)}$, since $\langle z, \Z^m\rangle \simeq \Z^{m+1}$ is abelian. Thus, the computable constant $C_3(n,m)=L_3(m+1)$ satisfies the desired result for all endomorphisms of the second type.

Suppose now that $\Psi$ is of the first type, i.e., $\Psi=\Psi_{\phi, Q, P}$, where $\phi\in \End(F_n)$, $Q\in M_{m\times m}(\Z)$, and $P\in M_{n\times m}(\Z)$. By Propositions~\ref{periodic-abelian} and~\ref{periodic-free}, we know that $\Per Q=\Fix Q^{L_3}$ and $\Per \phi =\Fix \phi^{(6n-6)!}$ for some computable constant $L_3=L_3(m)$. Take $C_3(m,n)=\lcm \big( L_3(m), (6n-6)!\big)$ and let us prove that $\Per \Psi =\Fix \Psi^{C_3}$.

By construction, we have both $\Per Q=\Fix Q^{C_3}$ and $\Per \phi =\Fix \phi^{C_3}$. It remains to see that the matrix $P$ does not affect negatively into the calculations. To prove $\Per \Psi =\Fix \Psi^{C_3}$, it is enough to see that $\Fix \Psi^k \leqslant \Fix \Psi^{C_3}$ for all $k\geqslant 1$, which reduces to see that $\Fix \Psi^{\lambda C_3} \leqslant \Fix \Psi^{C_3}$ for every $\lambda\in \N$ (in fact, if this is true then $\Fix \Psi^k \leqslant \Fix \Psi^{kC_3}\leqslant \Fix \Psi^{C_3}$, for an arbitrary $k\geqslant 1$).

By Delgado--Ventura~\cite[Lemma~5.4(ii)]{DV}, powers work like this: $(\Psi_{\phi, Q, P})^k=\Psi_{\phi^k, Q^k, P_k}$, where $P_k=\sum_{i=0}^{k-1} A^i PQ^{(k-1)-i}$ and $A\in M_{n\times n}(\Z)$ is the abelianization matrix corresponding to $\phi\in \End(F_n)$. In our situation, $(\Psi_{\phi, Q, P})^{C_3}=\Psi_{\phi^{C_3}, Q^{C_3}, P_{C_3}}$, and $(\Psi_{\phi, Q, P})^{\lambda C_3}= \Psi_{\phi^{\lambda C_3}, Q^{\lambda C_3}, P_{\lambda C_3}}$, where

 \begin{equation}\label{prs}
\begin{array}{ccl}
P_{\lambda C_3} & = & \sum_{i=0}^{\lambda C_3-1} A^i PQ^{(\lambda C_3-1)-i} \\[1mm] & = & \sum_{j=0}^{\lambda-1} \sum_{i=0}^{C_3-1} A^{jC_3 +i} PQ^{(\lambda C_3 -1)-(jC_3 +i)} \\[1mm] &  = & \sum_{j=0}^{\lambda-1} \sum_{i=0}^{C_3-1} A^{jC_3 +i} PQ^{(\lambda-j)C_3 -1-i} \\[1mm] & = & \sum_{j=0}^{\lambda-1} A^{jC_3} \big( \sum_{i=0}^{C_3-1} A^i PQ^{(C_3-1)-i}\, \big) Q^{(\lambda-j-1)C_3} \\[1mm] & = & \sum_{j=0}^{\lambda-1} (A^{C_3})^j P_{C_3} (Q^{C_3})^{(\lambda-1)-j}.
\end{array}
 \end{equation}

Take any element $t^a u \in \Fix \Psi^{\lambda C_3}$ and let us prove that $t^a u \in \Fix \Psi^{C_3}$. Our assumption means that $t^{aQ^{\lambda C_3}+u^{\rm ab}P_{\lambda C_3}}(u\phi^{\lambda C_3})=t^a u$ and so,
\begin{itemize}
\item[(1)] $a(I_m-Q^{\lambda C_3})=u^{\rm ab}P_{\lambda C_3}$, and
\item[(2)] $u\in \Fix \phi^{\lambda C_3} \leqslant \Per \phi=\Fix \phi^{C_3}$; in particular, $u^{\rm ab}A^{C_3}=u^{\rm ab}$.
\end{itemize}
Now from~\eqref{prs} and condition $(1)$ we have,
 $$
\begin{array}{ccl}
a(I_m-Q^{C_3})(I+Q^{C_3}+ \cdots + Q^{(\lambda -1)C_3})& = & u^{\rm ab}\sum_{j=0}^{\lambda-1} (A^{C_3})^j P_{C_3} (Q^{C_3})^{(\lambda-1)-j} \\[1mm] & = & u^{\rm ab}\sum_{j=0}^{\lambda-1} P_{C_3} (Q^{C_3})^{(\lambda-1)-j} \\[1mm] & = & u^{\rm ab}P_{C_3}\sum_{j=0}^{\lambda-1} (Q^{C_3})^{(\lambda-1)-j} \\[1mm] & = & u^{\rm ab} P_{C_3} \big( I+Q^{C_3}+ \cdots +Q^{(\lambda -1)C_3} \big),
\end{array}
 $$
which means that $a(I_m-Q^{C_3})-u^{\rm ab}P_{C_3} \in \ker \big( I_m +Q^{C_3}+ \cdots +Q^{(\lambda -1)C_3} \big)$. But
 $$
\ker \big( I_m+Q^{C_3}+ \cdots +Q^{(\lambda -1)C_3} \big)\leqslant \ker (I_m-Q^{\lambda C_3})=\Fix Q^{\lambda C_3}\leqslant \Per Q =\Fix Q^{C_3}=\ker (I_m-Q^{C_3})
 $$
hence, we also have $a(I_m-Q^{C_3})-u^{\rm ab}P_{C_3} \in \ker (I_m -Q^{C_3})$. However, the two polynomials $1+x^{C_3}+ \cdots + x^{(\lambda -1)C_3}$ and $1-x^{C_3}$ are relatively prime so, from Bezout's equality we deduce that $\ker \big( I_m+Q^{C_3}+\cdots +Q^{(\lambda -1)C_3}) \cap \ker (I_m -Q^{C_3})=\{0\}$. Therefore, $a(I_m-Q^{C_3})-u^{\rm ab}P_{C_3}=0$ and so,
 $$
(t^a u)\Psi^{C_3}=t^{aQ^{C_3}+\textbf{u}P_{C_3}}(u\phi^{C_3})=t^a u.
 $$
This shows that $\Fix \Psi^{\lambda C_3} =\Fix \Psi^{C_3}$ for every $\lambda \in \N$, from which we immediately deduce $\Per \Psi=\Fix \Psi^{C_3}$. This means that the constant $C_3(n,m)=\lcm \big( L_3(m), (6n-6)!\big)$ satisfies the desired result for all endomorphisms of the first type.

Hence, the computable constant $C_3(n,m)=\lcm \big( L_3(m), L_3(m+1), (6n-6)!\big)$ makes the job.
\end{proof}

\begin{cor}
Let $\Psi \in \End(\Z^m \times F_n)$. Then $\Per \Psi$ is finitely generated if and only if $\Fix \Psi^p$ is finitely generated for all $p\geqslant 1$.
\end{cor}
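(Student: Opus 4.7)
The plan is to reduce the corollary to the two main tools of this section: the uniform exponent supplied by Theorem~\ref{periodic-fatf}, and Corollary~\ref{aa}, which propagates finite generation of $\Fix \Psi^p$ down to $\Fix \Psi$.

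The reverse implication is immediate. By Theorem~\ref{periodic-fatf}, $\Per \Psi = \Fix \Psi^{C_3}$ for the computable constant $C_3=C_3(m,n)$, so if every $\Fix \Psi^p$ is finitely generated, then so is $\Per \Psi$ by taking $p=C_3$.

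For the forward direction, suppose $\Per \Psi$ is finitely generated, and fix an arbitrary $p\geqslant 1$. The key observation is that the subgroup $\Fix \Psi^{pC_3}$ is squeezed between two copies of $\Fix \Psi^{C_3}$: from $C_3 \mid pC_3$ we get $\Fix \Psi^{C_3}\leqslant \Fix \Psi^{pC_3}$, while from the definition of $\Per \Psi$ and Theorem~\ref{periodic-fatf} we have $\Fix \Psi^{pC_3}\leqslant \Per \Psi = \Fix \Psi^{C_3}$. Hence $\Fix (\Psi^p)^{C_3}=\Fix \Psi^{pC_3}=\Fix \Psi^{C_3}= \Per \Psi$ is finitely generated. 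Applying Corollary~\ref{aa} to the endomorphism $\Psi^p\in \End(\Z^m\times F_n)$ with exponent $C_3$ then yields that $\Fix \Psi^p$ is finitely generated, as required.

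I do not foresee any real obstacle: the nontrivial content has already been absorbed into Theorem~\ref{periodic-fatf} (uniform capture of all periodicity of $\Psi$) and into Corollary~\ref{aa} (which in turn relies on Theorem~\ref{fo fg}(ii) about finite-order automorphisms having finitely generated fixed subgroups). Everything that remains is the elementary double inclusion for $\Fix \Psi^{pC_3}$ together with one invocation of Corollary~\ref{aa} applied to the power $\Psi^p$ rather than to $\Psi$ itself.
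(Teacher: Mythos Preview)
Your proof is correct and follows exactly the approach the paper intends: the paper's proof is the single sentence ``This follows immediately from Theorem~\ref{periodic-fatf} and Corollary~\ref{aa}'', and you have simply unpacked that sentence, making explicit the sandwich $\Fix \Psi^{C_3}\leqslant \Fix \Psi^{pC_3}\leqslant \Per\Psi=\Fix\Psi^{C_3}$ and the application of Corollary~\ref{aa} to the power $\Psi^p$.
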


\begin{proof}
This follows immediately from Theorem~\ref{periodic-fatf} and Corollary~\ref{aa}.
\end{proof}

\section{The auto-fixed closure of a subgroup of $\Z^m\times F_n$}\label{5}

Given an endomorphism, it is natural to ask for the computability of (a basis of) its fixed subgroup (or its periodic subgroup). In the abelian case, this can easily be done by just solving a system of linear equations, because the fixed point subgroup of an endomorphism of $\Z^m$ is nothing else but the eigenspace of eigenvalue 1 of the corresponding matrix, $\Fix Q=E_1(Q)$.

In the free case, this is a hard problem solved for automorphisms by making strong use of the train track techniques, see Bogopolski--Maslakova~\cite{BM} (amending the previous wrong version Maslakova~\cite{Ma}) and, alternatively, Feingh--Handel~\cite[Prop.~7.7]{FH}.

\begin{thm}[Bogopolski--Maslakova, \cite{BM}; Feingh--Handel, \cite{FH}]\label{maslakova}
Let $\phi \colon F_n\to F_n$ be an automorphism. Then, a free-basis for $\Fix \phi$ is computable.
\end{thm}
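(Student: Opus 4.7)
The plan is to invoke the Bestvina--Handel train-track machinery, whose algorithmic nature is the substance of the result. Given an automorphism $\phi \in \Aut(F_n)$, the first step is to construct, via the Bestvina--Handel algorithm, a relative train-track representative $f\colon G \to G$ of (a suitable power of) the outer class of $\phi$, where $G$ is a finite connected graph with $\pi_1(G)$ identified with $F_n$ through a chosen spanning tree and marked base vertex. Upgrading this to a \emph{completely split improved relative train track map} (a CT, in the terminology of Feighn--Handel) supplies the structural control over cancellation and growth that the rest of the argument requires; both the original construction and the CT upgrade are effective.

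Next, I would translate the problem about $\Fix \phi$ into a combinatorial problem on the CT. Since $\Fix \phi$ consists of elements represented by loops at the base vertex that are freely homotopic, via $f$, back to themselves, every such element arises from a closed lift (to the universal cover $\tilde{G}$) of a \emph{Nielsen path} at the base vertex — a reduced edge-path $\sigma$ with $f(\sigma) \simeq \sigma$ rel endpoints. Principal vertices and fixed directions in the CT identify the finitely many candidate start/end combinatorial types for such Nielsen paths, and the free-basis of $\Fix \phi$ can be read off from a finite generating set of Nielsen paths at the base vertex (after, possibly, passing to a power and compensating via the action of $\phi$ on fixed conjugacy classes).

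The heart of the proof, and the step I expect to be the main obstacle, is producing an \emph{effective} upper bound on the complexity (combinatorial length in $G$, equivalently word length in $F_n$ with respect to a free-basis) of the Nielsen paths one must enumerate. This is exactly the point where the earlier version of Maslakova was incomplete, and where both Bogopolski--Maslakova and Feighn--Handel do the real work: one has to exploit bounded cancellation together with the strata-by-strata analysis of the CT (exponentially growing strata, polynomially growing strata, exceptional paths) to bound the size of an indecomposable Nielsen path in terms of the combinatorial complexity of $f$. Once such a computable bound $L = L(\phi)$ is in hand, the algorithm is routine: enumerate all edge-paths at the base vertex of length $\le L$, test each for being fixed up to reduction by iterating $f$ once and comparing, collect the fixed ones, and apply Stallings foldings to the resulting finite family to extract a free-basis of $\Fix \phi$.

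Finally, to handle the jump from outer automorphisms back to automorphisms (the train-track picture naturally computes fixed \emph{conjugacy classes} for $[\phi] \in \Out(F_n)$), one uses the standard trick of conjugating the resulting candidate subgroup by each representative in the finitely many ways an outer fixed class can contribute honestly fixed elements of $\phi$; this is the role of the Brinkmann-type analysis of which fixed classes of $[\phi]$ actually contain fixed elements of $\phi$, and it is algorithmically straightforward once $\Fix [\phi]$ is computed.
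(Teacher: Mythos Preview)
The paper does not prove this theorem; it is quoted as a black box from the literature (Bogopolski--Maslakova~\cite{BM} and Feighn--Handel~\cite{FH}), preceded only by the sentence that the problem ``is a hard problem solved for automorphisms by making strong use of the train track techniques''. So there is no in-paper proof to compare your attempt against.

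Your sketch is in the right spirit --- it is indeed a train-track argument, and you correctly identify the effective bound on Nielsen-path complexity as the crux and the place where the earlier Maslakova version went wrong. That said, as a standalone proof it is too loose in a couple of places. First, the identification ``$\Fix\phi$ = closed Nielsen paths at the base vertex'' is not accurate as stated: indivisible Nielsen paths need not start or end at the chosen basepoint, and the fixed subgroup is really read off from a finite graph built out of fixed vertices/directions and INPs (the ``fixed-point graph'' or, in Feighn--Handel, the analysis of principal vertices and their links), not by simple enumeration of based loops. Second, your final paragraph on passing from $\Out(F_n)$ to $\Aut(F_n)$ is more delicate than ``conjugate by each representative in finitely many ways'': one must identify the correct lift of the topological representative and track a specific preferred basepoint/lift in the universal cover, which is a nontrivial part of the Bogopolski--Maslakova algorithm. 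None of this is fatal to your outline, but a genuine proof would have to make these steps precise, which is exactly the substantial work done in the cited references.
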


Finally, the free-abelian times free case was studied by Delgado--Ventura who solved the problem (including the decision on whether the fixed subgroup is finitely generated or not), modulo a solution for the free case. More precisely,

\begin{thm}[Delgado--Ventura, \cite{DV}]\label{compute-fix}
Let $G=\Z^m \times F_n$. There is an algorithm which, on input an automorphism $\Psi \colon G\to G$, decides whether $\Fix \Psi$ is finitely generated or not and, if so, computes a basis for it.
\end{thm}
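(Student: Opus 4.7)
The plan is to make the description of $\Fix \Psi$ assembled in diagram~\eqref{15} effective. Writing the input as $\Psi = \Psi_{\phi, Q, P}$ with $\phi \in \Aut(F_n)$, $Q \in \GL_m(\Z)$ and $P \in M_{n\times m}(\Z)$, we have $L_{\Fix \Psi} = E_1(Q) = \ker(Q - I_m)$ for the abelian part, and $(\Fix \Psi)\pi = N P'^{-1} \rho'^{-1}$ for the free part, where $\rho'$, $P'$, $M = \im(Q - I_m)$ and $N = M \cap \im P'$ are as in the diagram. So the algorithm reduces to computing, in turn, $\Fix \phi$, the subgroups $\im \rho'$, $\im P'$, $M$, $N$, then $N P'^{-1}$, then $N P'^{-1} \rho'^{-1}$, and finally attaching the correct vectors in $\Z^m$ to each free generator.

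First, invoke Theorem~\ref{maslakova} to obtain a free-basis $\{u_1, \ldots, u_k\}$ of $\Fix \phi$. Abelianizing the $u_j$'s yields generators of $\im \rho' \leqslant \Z^n$, and applying $P$ produces an explicit set of generators for $\im P' \leqslant \Z^m$. Compute $M$ from the columns of $Q - I_m$ and then $N = M \cap \im P'$ via Smith-normal-form manipulations in $\Z^m$. To decide finite generation of $\Fix \Psi$: if $k \leqslant 1$, the subgroup $(\Fix \Psi)\pi$ sits inside a trivial or infinite cyclic group and is automatically finitely generated; if $k \geqslant 2$, then $(\Fix \Psi)\pi$ is a nontrivial normal subgroup of the free group $\Fix \phi$ (it already contains the commutator $[\Fix \phi, \Fix \phi] = \ker \rho'$), and hence is finitely generated if and only if it has finite index, which by the identities in the proof of Theorem~\ref{fo fg} equals $[\im P' : N]$. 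This latter condition is decidable by computing the elementary divisors of $\im P' / N$.

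If the index is infinite, report that $\Fix \Psi$ is not finitely generated. Otherwise compute $N P'^{-1} \leqslant \im \rho'$ by elementary linear algebra, and lift it to a free-basis $\{v_1, \ldots, v_r\}$ of the finite-index subgroup $N P'^{-1}\rho'^{-1} \leqslant \Fix \phi$ via the Reidemeister--Schreier procedure (or equivalently Stallings foldings). For each $v_j$, solve the linear system $a_j(I_m - Q) = v_j^{\mathrm{ab}} P$ over $\Z$, which is consistent precisely because $v_j^{\mathrm{ab}} P \in N \subseteq \im(Q-I_m) = \im(I_m-Q)$ by construction of $v_j$. Separately compute an abelian-basis $\{b_1, \ldots, b_s\}$ of $E_1(Q) = \ker(Q - I_m)$ by standard linear algebra over $\Z$. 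Then $\{t^{a_1} v_1, \ldots, t^{a_r} v_r,\, t^{b_1}, \ldots, t^{b_s}\}$ is a basis of $\Fix \Psi$ in the normal form prescribed in Delgado--Ventura~\cite{DV}.

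The genuine difficulty is concentrated in the very first step: computing $\Fix \phi$ for an arbitrary automorphism of a free group, i.e., Theorem~\ref{maslakova} itself, which rests on the delicate train-track machinery of Bogopolski--Maslakova and Feingh--Handel. Once this is granted as a black box, the rest of the algorithm is just a chain of Smith-normal-form computations, solutions of linear systems over $\Z$, and one finite-index subgroup basis computation in a free group, all of them entirely routine.
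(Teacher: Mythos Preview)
Your proof is correct and follows essentially the same strategy the paper itself uses: the paper does not reprove this cited result from~\cite{DV}, but in the proof of Proposition~\ref{com-int} it carries out exactly this algorithm for the more general case of several automorphisms (compute $\Fix\phi$ via Theorem~\ref{maslakova}, chase the diagram to get $N$, test the index $[\im P':N]$ for finiteness, then lift through a Schreier construction and solve the linear systems for the abelian coordinates). Two small remarks: since matrices act on the right of row vectors, $M=\im(Q-I_m)$ is the $\Z$-span of the \emph{rows} of $Q-I_m$, not the columns; and in the case $k\leqslant 1$ you should say explicitly how the basis is produced (when $\Fix\phi=\langle u\rangle$ with $u\rho\neq 0$, one finds the least $r\geqslant 0$ with $r\cdot u\rho P\in\im(I_m-Q)$ and solves for the accompanying vector, exactly as in case~(i) of the proof of Proposition~\ref{com-int}).
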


We note that Theorems~\ref{maslakova} and~\ref{compute-fix} work for automorphisms; as far as we know, the computability of the fixed subgroup of an endomorphism, both in the free and in the free-abelian times free cases, remains open.

%

In the present section, we are interested in the dual problems: given a subgroup, decide whether it can be realized as the fixed subgroup of an endomorphism (resp., an automorphism, a family of endomorphisms, a family of automorphisms) and in the affirmative case, compute such an endomorphism (resp., automorphism, family of endomorphisms, family of automorphisms).

Generalizing the terminology introduced in Martino--Ventura~\cite{MV} to an arbitrary group $G$, a subgroup $H\leqslant G$ is called \emph{endo-fixed} (resp., \emph{auto-fixed}) if $H=\Fix S$ for some set of endomorphisms $S\subseteq \End(G)$ (resp., automorphisms $S\subseteq \Aut(G)$). Simillarly, a subgroup $H\leqslant G$ is said to be \emph{1-endo-fixed} (resp., \emph{1-auto-fixed}) if $H=\Fix \phi$, for some $\phi\in \End(G)$ (resp., some $\phi\in \Aut(G)$). Notice that an auto-fixed (resp., endo-fixed) subgroup of $G$ is an intersection of 1-auto-fixed (resp., 1-endo-fixed) subgroups of $G$, and vice-versa.

Of course, it is straightforward to see that all these notions do coincide in the abelian case: a subgroup $H\leqslant \Z^m$ is endo-fixed if and only if it is auto-fixed, if and only if it is 1-endo-fixed, if and only if it is 1-auto-fixed, and if and only if it is a direct summand, $H\leqslant_{\oplus}\Z^m$.

In the free case (and so, in the free-abelian times free as well) the situation is much more delicate: in Martino--Ventura~\cite{MV}, the authors conjectured that the families of auto-fixed and 1-auto-fixed subgroups of $F_n$ do coincide; in other words, the family of 1-auto-fixed subgroups of $F_n$ is closed under arbitrary intersections. (A similar conjecture can be stated for endomorphisms.) As far as we know, this still remains an open problem, with no progress made since the paper~\cite{MV} itself, where the authors showed that, for any submonoid $S\leqslant \End (F_n)$, there exists $\phi \in S$ such that $\Fix(S)$ is a free factor of $\Fix \phi$; however, they also gave an explicit example of a 1-auto-fixed subgroup of $F_n$ admitting a free factor which \emph{is not} even endo-fixed. In this context it is worth mentioning the result Martino--Ventura~\cite[Cor.~4.2]{MV2} showing that we can always restrict ourselves to consider finite intersections.

Let $H\leqslant G$. We denote by $\Aut_H(G)$ the subgroup of $\Aut(G)$ consisting of all automorphisms of $G$ which fix $H$ pointwise,
$\Aut_H(G)=\{\phi \in \Aut(G) \mid H\leqslant \Fix \phi\}$, usually called the \emph{(pointwise) stabilizer} of $H$. Analogously, we denote by $\End_H(G)$ the submonoid of $\End(G)$ consisting of all endomorphisms of $G$ which fix every element of $H$. Clearly, $\Aut_H(G) \leqslant \End_H(G)$. The following is a well-known result about stabilizers in the free group case, which will be used later:

\begin{thm}[{McCool, \cite{McCool}; see also~\cite[Prop.~I.5.7]{LS}}]\label{McCool}
Let $H\leqslant_{fg} F_n$, given by a finite set of generators. Then the stabilizer, $\Aut_H (F_n)$, of $H$ is also finitely generated (in fact, finitely presented), and a finite set of generators (and relations) is algorithmically computable.
\end{thm}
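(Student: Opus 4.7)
The approach is the classical one due to McCool, via Whitehead's peak-reduction theorem. First, I would fix a finite generating tuple $\mathbf{h}=(h_1,\ldots,h_k)$ for $H$, and note that $\Aut_H(F_n)$ coincides with the pointwise stabilizer of $\mathbf{h}$ under the natural action of $\Aut(F_n)$ on $F_n^k$ (fixing a generating set pointwise is the same as fixing the whole subgroup pointwise). Using Whitehead's length-reduction algorithm I would first replace $\mathbf{h}$ by a tuple $\mathbf{h}^{\ast}$ in the same $\Aut(F_n)$-orbit of minimal total length $|\mathbf{h}^{\ast}|=\sum_i|h_i^{\ast}|$; the pointwise stabilizers of $\mathbf{h}$ and of $\mathbf{h}^{\ast}$ are conjugate in $\Aut(F_n)$ via the connecting automorphism, so it is enough to present the latter.

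Next, I would build \emph{McCool's graph} $\mathcal{M}$: vertices are the tuples in the $\Aut(F_n)$-orbit of $\mathbf{h}^{\ast}$ of minimal length, and directed edges are labelled by Whitehead automorphisms carrying one vertex to another. The vertex set is finite because there are only finitely many $k$-tuples of words in $F_n$ of length at most $|\mathbf{h}^{\ast}|$, and the edge set is finite because the set of Whitehead automorphisms of $F_n$ is. Whitehead's peak-reduction theorem ensures that $\mathcal{M}$ is connected, and moreover that every element of the pointwise stabilizer of $\mathbf{h}^{\ast}$ admits a factorization into Whitehead automorphisms whose induced path on tuples is a closed loop at $\mathbf{h}^{\ast}$ lying entirely inside $\mathcal{M}$. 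A spanning tree in $\mathcal{M}$ then yields a finite generating set: one generator per non-tree edge, obtained by concatenating the unique tree-path to the edge's source, the edge itself, and the tree-path back from the target. For finite presentability I would attach a $2$-cell to $\mathcal{M}$ for each relation in a Whitehead-style presentation of $\Aut(F_n)$ (inverse relations, commutation of Whitehead automorphisms with disjoint supports, etc.), insofar as all corners of the relation polygon lie in $\mathcal{M}$; peak reduction shows that the resulting $2$-complex is simply connected, whence its $\pi_1$ is presented by the chosen generators and these relations. All the data involved — the set of minimal-length tuples, the Whitehead edges between them, the spanning tree, and the relation polygons — can be explicitly enumerated by a machine, so the generators and relations are algorithmically computable from $\mathbf{h}$.

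The main obstacle, and the non-routine core of the argument, is precisely the peak-reduction step: given a word $\tau_1\cdots\tau_N$ in Whitehead automorphisms that fixes $\mathbf{h}^{\ast}$, one must show that by inserting relations from the Whitehead presentation of $\Aut(F_n)$ it can be homotoped to a word whose partial products all keep $\mathbf{h}^{\ast}$ at minimal length. This is Whitehead's original length-reduction theorem together with McCool's refinement stating that the reduction can be made inside the Cayley $2$-complex of $\Aut(F_n)$; everything else — constructing $\mathcal{M}$, reading off generators and relations, and executing the algorithm — is bookkeeping once peak reduction is in hand.
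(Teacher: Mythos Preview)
Your sketch is the classical McCool argument via Whitehead peak reduction, and it is essentially correct. Note, however, that the paper does not supply its own proof of this statement: Theorem~\ref{McCool} is quoted as a known result from the literature (McCool~\cite{McCool}; Lyndon--Schupp~\cite[Prop.~I.5.7]{LS}) and used later as a black box in the proof of Theorem~\ref{main-1}. So there is nothing in the paper to compare your proposal against; your outline faithfully reproduces the argument found in those cited references.
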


Following with the terminology from~\cite{MV}, the \emph{auto-fixed closure} of $H$ in $G$, denoted $\acl_G{(H)}$, is the subgroup
 $$
\acl_G{(H)}=\Fix(\Aut_H(G))=\bigcap_{{\tiny \begin{array}{c} \phi\in \Aut(G) \\ H\leqslant \Fix \phi \end{array}}} \Fix\phi ,
 $$
i.e., the smallest auto-fixed subgroup of $G$ containing $H$. Similarly, the \emph{endo-fixed closure} of $H$ in $G$, is $\ecl_G{(H)}=\Fix(\End_H(G))$. Since $\Aut_H(G)\leqslant \End_H(G)$, it is obvious that $\ecl_{G}(H) \leqslant \acl_{G}(H)$. However, the equality does not hold in general (for example, the free group $F_n$ admit 1-endo-fixed subgroups which are not auto-fixed; see Martino--Ventura~\cite{MV3}).

In Ventura~\cite{V}, fixed closures in free groups are studied from the algorithmic point of view. More precisely, the following results were proven:

\begin{thm}[Ventura, \cite{V}]\label{Enric}
Let $H\leqslant_{fg} F_n$, given by a finite set of generators. Then, a free-basis for the auto-fixed closure $\acl_{F_n}(H)$ (resp., the endo-fixed closure $\ecl_{F_n}(H)$) of $H$ is algorithmically computable, together with a set of $k\leqslant 2n$ automorphisms $\phi_1, \ldots, \phi_k \in \Aut(F_n)$ (resp., endomorphisms $\phi_1, \ldots, \phi_k \in \End(F_n)$), such that $\acl_{F_n}(H)= \Fix \phi_1 \cap \cdots \cap \Fix \phi_k$ (resp., $\ecl_{F_n}(H)= \Fix \phi_1 \cap \cdots \cap \Fix \phi_k$).
\end{thm}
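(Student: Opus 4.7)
The plan is to combine three ingredients: McCool's Theorem~\ref{McCool}, which gives a computable finite generating set of $\Aut_H(F_n)$; the Bogopolski--Maslakova--Feighn--Handel Theorem~\ref{maslakova}, which computes $\Fix \phi$ for an arbitrary automorphism $\phi \in \Aut(F_n)$; and the Bestvina--Handel inequality $\rk(\Fix\phi)\leqslant n$, which controls the length of any descending chain of fixed subgroups. Stallings' folding algorithm for intersections of finitely generated subgroups of $F_n$ then ties the whole computation together.

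For the auto-fixed closure, I would first run McCool's algorithm on $H$ to obtain a finite generating set $\{\psi_1,\ldots,\psi_s\}$ of $\Aut_H(F_n)$. Since the fixed subgroup of a group of automorphisms coincides with that of any generating set, $\acl_{F_n}(H) = \bigcap_{i=1}^{s} \Fix \psi_i$. Applying Theorem~\ref{maslakova} to each $\psi_i$ gives a free-basis of $\Fix\psi_i$, and folding produces a free-basis of the intersection. To cut the list down to at most $2n$ automorphisms, one filters greedily: set $K_0=F_n$, and iteratively take $K_i = K_{i-1}\cap \Fix\psi_{j_i}$ only when this is a proper subgroup of $K_{i-1}$ (discarding $\psi_{j_i}$ otherwise, without changing the final intersection). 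This yields a strictly descending chain $F_n=K_0\supsetneq K_1\supsetneq \cdots \supsetneq K_\ell = \acl_{F_n}(H)$. Bestvina--Handel gives $\rk(K_1)\leqslant n$, and a careful rank-versus-index analysis inside the finite-rank free group $K_1$ is used to bound $\ell\leqslant 2n$.

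The main obstacle is achieving precisely the constant $2n$ in this bound, rather than some cruder multiple of $n$. The subtlety is that in a free group a strictly descending chain of finitely generated subgroups need not have strictly decreasing rank: by Schreier's formula a proper finite-index inclusion raises the rank, while an infinite-index inclusion may preserve it. Controlling these two behaviours simultaneously, using that each $K_i$ is stable under the restriction of an automorphism (so Bestvina--Handel applies not just to $\Fix\psi_{j_i}$ but to the iterated intersections), is what forces the quantitative bound, and this is the delicate part of the argument.

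For the endo-fixed closure $\ecl_{F_n}(H)$, the same skeleton applies: one uses Martino--Ventura's reduction from~\cite{MV2} that every endo-fixed subgroup is a finite intersection of $1$-endo-fixed ones, together with an effective enumeration of the relevant endomorphisms in $\End_H(F_n)$ coming from the finiteness of the Stallings graph of $H$ (which bounds where generators can be sent). The additional difficulty here is the lack of a computable analog of Theorem~\ref{maslakova} for general endomorphisms, and it can be bypassed by restricting to a stable image on which the endomorphism becomes bijective and then applying Theorem~\ref{maslakova} inside that subgroup.
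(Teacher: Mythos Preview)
The paper does not actually prove this theorem: it is quoted from~\cite{V}, and the only thing the paper itself supplies is the one-sentence conceptual sketch (just after Corollary~\ref{Enric-deciding}) for the automorphism case --- use McCool's Theorem~\ref{McCool} to get generators of $\Aut_H(F_n)$, use Theorem~\ref{maslakova} to compute each $\Fix\psi_i$, then intersect --- together with the remark that ``the bound $k\leqslant 2n$ comes from free group arguments''. Your treatment of the auto-fixed case follows exactly that sketch, and your greedy descending-chain idea for trimming to at most $2n$ automorphisms is the right shape for that unspecified ``free group argument'' (you leave the actual rank-versus-index count as a black box, which is fair given that the paper does too).

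Your endo-fixed paragraph, however, has a real gap. The monoid $\End_H(F_n)$ is in general \emph{not} finitely generated (the paper says so explicitly in the paragraph preceding Question~5.6), and the Stallings graph of $H$ does not ``bound where generators can be sent'': an endomorphism fixing $H$ pointwise may still map the ambient free generators $z_1,\ldots,z_n$ to arbitrarily long words, so no finite enumeration of ``relevant endomorphisms'' falls out of that graph. The stable-image trick you propose is reasonable for computing $\Fix\phi$ of a \emph{single} given endomorphism, but it does nothing for the prior and harder problem of producing finitely many $\phi$'s whose fixed subgroups intersect to $\ecl_{F_n}(H)$. The argument in~\cite{V} for the endomorphism case is genuinely different from the automorphism case and does not proceed by enumerating $\End_H(F_n)$; as written, your plan for this half does not go through.
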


\begin{cor}[Ventura, \cite{V}]\label{Enric-deciding}
It is algorithmically decidable whether a given $H\leqslant_{fg} F_n$ is auto-fixed (resp., endo-fixed) or not.
\end{cor}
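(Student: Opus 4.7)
The plan is to combine Theorem~\ref{Enric} with the classical solvability of the membership problem for finitely generated subgroups of $F_n$.

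First, I observe that a subgroup $H\leqslant F_n$ is auto-fixed if and only if $H=\acl_{F_n}(H)$. The inclusion $H\leqslant \acl_{F_n}(H)$ is automatic; and, by construction, $\acl_{F_n}(H)$ is the smallest auto-fixed subgroup of $F_n$ containing $H$, so equality holds precisely when $H$ itself is auto-fixed. The same reasoning applies verbatim to the endo-fixed case, replacing $\acl_{F_n}(H)$ by $\ecl_{F_n}(H)$ and $\Aut(F_n)$ by $\End(F_n)$. Hence deciding the property reduces to an equality test between $H$ and its computed closure.

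Next, given $H\leqslant_{fg} F_n$ by a finite generating set, I apply Theorem~\ref{Enric} to compute a free-basis $\{w_1,\ldots ,w_r\}$ of $\acl_{F_n}(H)$ (respectively, of $\ecl_{F_n}(H)$). Since $H\leqslant \acl_{F_n}(H)$ automatically, deciding $H=\acl_{F_n}(H)$ reduces to testing whether $w_i\in H$ for each $i=1,\ldots ,r$. The membership problem for finitely generated subgroups of $F_n$ is classically decidable (e.g.\ via the Stallings folding of $H$, after which one simply reads each $w_i$ from the basepoint and checks that the run closes up). After $r$ such tests, the question is settled, and the same procedure works mutatis mutandis for the endo-fixed version.

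The substantive work has already been done upstream: all the depth lives in Theorem~\ref{Enric}, which in turn relies on the train track machinery behind Theorem~\ref{maslakova} and on the explicit McCool generating set for $\Aut_H(F_n)$ from Theorem~\ref{McCool}. Given those inputs, the present corollary is a purely formal consequence, and I do not expect any genuine obstacle; the only minor point to be careful about is to remember that $\acl_{F_n}(H)$ is \emph{a priori} larger than $H$ and must be computed first, rather than trying to work directly with generators of $H$ and $\Aut_H(F_n)$.
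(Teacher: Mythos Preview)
Your proposal is correct; the paper itself does not supply a proof of this corollary (it is quoted from~\cite{V}), but your argument exactly parallels the paper's proof of the analogous Corollary~\ref{deciding} for $\Z^m\times F_n$: compute the closure via the preceding theorem and then decide the equality $H=\acl_{F_n}(H)$ by solving the membership problem. There is nothing to add.
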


For example it is well known that, for every $w\in F_n$ and $r\in \Z$, the equation $x^r=w^r$ has a \emph{unique} solution in $F_n$, which is the obvious one $x=w$; this means that any endomorphism $\phi\colon F_n \to F_n$ fixing $w^r$ \emph{must} also fix $w$. Therefore, the auto-fixed and endo-fixed closures of a cyclic subgroup of $F_n$ are equal to the maximal cyclic subgroup where it is contained; in other words, a cyclic subgroup of $F_n$ is auto-fixed, if and only if it is endo-fixed, and if and only if it is maximal cyclic.

In the present section, we prove the analog of Theorem~\ref{Enric} for free-abelian time free groups, and only in the automorphism case. Our main results in the section are:

\begin{thm}\label{closure}
Let $G=\Z^m\times F_n$. There is an algorithm which, given a finite set of generators for a subgroup $H\leqslant_{fg} G$, outputs a set of automorphisms $\Psi_1, \ldots, \Psi_k \in \Aut(G)$ such that $\acl_{G}(H)= \Fix \Psi_1 \cap \cdots \cap \Fix \Psi_k$, decides whether this is finitely generated or not and, in case it is, computes a basis for it. \end{thm}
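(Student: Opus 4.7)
The plan is to execute the following three stages: (i) establish an analog of McCool's theorem (Theorem~\ref{McCool}) for $G=\Z^m\times F_n$, producing a finite, computable generating set for the stabilizer $\Aut_H(G)$; (ii) use this to write $\acl_G(H)$ as a finite intersection of fixed subgroups of individual automorphisms; (iii) compute each such fixed subgroup and then the intersection, deciding finite generation along the way.

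For stage (i), I would parametrize automorphisms of $G$ as $\Psi_{\phi,Q,P}$ (following the paper's conventions), and translate the condition ``$\Psi_{\phi,Q,P}$ fixes $H$ pointwise'' into concrete constraints on $\phi$, $Q$, and $P$. Writing a basis $\{t^{a_1}u_1,\ldots,t^{a_r}u_r,t^{b_1},\ldots,t^{b_s}\}$ of $H$, these constraints are: $u_i\phi=u_i$ for all $i$ (so $\phi$ lies in the McCool stabilizer of $H\pi=\langle u_1,\ldots,u_r\rangle$ in $\Aut(F_n)$, finitely generated and computable by Theorem~\ref{McCool}); plus the linear $\Z$-conditions $a_i(Q-I_m)=-u_i^{\mathrm{ab}}P$ for each $i$ and $b_j(Q-I_m)=0$ for each $j$. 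The ``$\phi$-part'' and the ``$(Q,P)$-part'' interact through the composition rule~\eqref{.} (twisted by the abelianization matrix $A$ of $\phi$) rather than as a direct product; however, this is precisely the setting handled by Day's theorem on stabilizers in right-angled Artin groups, whose version for ``exact elements'' in $\Z^m\times F_n$ is exactly the analog alluded to in the introduction. The upshot is that $\Aut_H(G)$ is finitely generated, and a generating set $\Psi_1,\ldots,\Psi_k$ can be explicitly computed from the input basis of $H$.

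For stage (ii), since $\Aut_H(G)=\langle\Psi_1,\ldots,\Psi_k\rangle$ and the fixed set of a composition is contained in the fixed sets of its factors (hence any $g$ fixed by all $\Psi_i$ is fixed by every word on them), we immediately get $\acl_G(H)=\Fix(\Aut_H(G))=\Fix\Psi_1\cap\cdots\cap\Fix\Psi_k$. For stage (iii), Theorem~\ref{compute-fix} applies to each $\Psi_i$, deciding whether $\Fix\Psi_i$ is finitely generated and, when so, returning a basis. It remains to intersect. Because $G$ is not Howson, it is delicate to pass from finite generation of each $\Fix\Psi_i$ to that of their intersection; one exploits the split structure, noting that $(\acl_G(H))\pi$ is contained in $\bigcap(\Fix\Psi_i)\pi$, a finite intersection in $F_n$ and hence finitely generated by Howson's property of $F_n$, while $\acl_G(H)\cap\Z^m$ is a finite intersection of subgroups of $\Z^m$ and therefore computable. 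Combining these two pieces with the subgroup-theoretic machinery for $\Z^m\times F_n$ from~\cite{DV}, one decides finite generation and, in the positive case, outputs a basis.

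The main obstacle is stage (i)---establishing the McCool/Day-type theorem for pointwise stabilizers in $\Z^m\times F_n$---because the twisted interaction between $\phi$ and $P$ via the abelianization $A$ prevents a naive direct-product treatment, forcing a careful analysis of $\Aut_H(G)$ as a (non-split) extension of the McCool group of $H\pi$ by the $\Z$-affine solution set in $(Q,P)$. A secondary obstacle is the non-Howson issue in stage (iii), where the existence of pairs of finitely generated subgroups of $G$ with non-finitely generated intersection means the decision procedure must genuinely track how the $\Z^m$-``twists'' above the common free projection interact across all the $\Fix\Psi_i$, rather than merely intersecting the projections and the abelian parts separately.
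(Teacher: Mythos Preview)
Your three-stage strategy matches the paper's, but both of the obstacles you flag are places where your proposed method would actually fail, and the paper resolves them differently.

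For stage~(i), you invoke Day's theorem ``whose version for exact elements in $\Z^m\times F_n$ is exactly the analog alluded to in the introduction''---but that version is precisely the statement to be proved (Theorem~\ref{main-1}), so you cannot assume it. Day's Theorem~\ref{Day} stabilizes only tuples of \emph{conjugacy classes}, and the paper explicitly remarks that the direct analysis you sketch (translating $\Psi_{\phi,Q,P}\in\Aut_H(G)$ into linear constraints on $Q,P$ twisted by the abelianization of $\phi$) ``brings to a tricky matrix equation with which we were unable to solve the problem.'' The paper's route is indirect: apply Day to a carefully chosen tuple $W$ of conjugacy classes (all words of bounded length in the basis of $H$, the bound coming from the Bogopolski--Ventura Theorem~\ref{BV}), show that $\Aut_W(G)=\Aut_H(G)\cdot\Inn(G)$, and then strip off the inner factor to extract generators for $\Aut_H(G)$. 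The Bogopolski--Ventura input is the missing ingredient in your sketch.

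For stage~(iii), applying Theorem~\ref{compute-fix} to each $\Psi_i$ separately is the approach the paper warns against (see the remark after Proposition~\ref{com-int}): some individual $\Fix\Psi_i$ may fail to be finitely generated while the full intersection is, and in that case Theorem~\ref{compute-fix} returns nothing you can intersect. Your fallback---intersect the free projections in $F_n$ and the abelian parts in $\Z^m$ separately---does not suffice either, because $(\acl_G(H))\pi$ is in general a proper (possibly infinite-index, hence infinitely generated) normal subgroup of $\bigcap_i(\Fix\Psi_i)\pi=\bigcap_i\Fix\phi_i$, not equal to it. The paper instead treats the $k$ automorphisms simultaneously: it concatenates the matrices into $\tilde Q=(I_m-Q_1\,|\cdots|\,I_m-Q_k)$ and $\tilde P=(P_1\,|\cdots|\,P_k)$, and runs the single-automorphism analysis of diagram~\eqref{15} once with $\tilde Q,\tilde P$ in place of $Q-I_m,P$. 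This identifies $(\Fix\Psi_1\cap\cdots\cap\Fix\Psi_k)\pi$ explicitly as a preimage $N\tilde P'^{-1}\rho'^{-1}$ inside $\bigcap_i\Fix\phi_i$, from which finite generation is a rank comparison and a basis is obtained via a Schreier-graph computation.
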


\begin{cor}\label{deciding}
One can algorithmically decide whether a given $H\leqslant_{fg} G$ is auto-fixed or not, and in case it is, compute a set of automorphisms $\Psi_1, \ldots, \Psi_k \in \Aut(G)$ such that $H=\Fix \Psi_1 \cap \cdots \cap \Fix \Psi_k$.
\end{cor}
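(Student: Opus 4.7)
The plan is to reduce Corollary~\ref{deciding} directly to Theorem~\ref{closure}, using the tautology that $H$ is auto-fixed if and only if $H=\acl_G(H)$. Indeed, if $H=\Fix(S)$ for some $S\subseteq \Aut(G)$, then $S\subseteq \Aut_H(G)$, so $\acl_G(H)=\Fix(\Aut_H(G))\leqslant \Fix(S)=H$, while the reverse inclusion is immediate from the definition. Conversely, if $H=\acl_G(H)$ then the automorphisms provided by Theorem~\ref{closure} realize $H$ itself as a finite intersection of $\Fix\Psi_i$'s, so $H$ is auto-fixed with explicit witnesses.

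Given $H\leqslant_{fg} G$ by a finite generating set, I would first use the algorithm of Delgado--Ventura~\cite{DV} to compute a basis of $H$ in the sense defined in the introduction. Then I would apply Theorem~\ref{closure} to produce $\Psi_1,\ldots,\Psi_k\in \Aut(G)$ with $\acl_G(H)=\Fix\Psi_1\cap\cdots\cap \Fix\Psi_k$, decide whether $\acl_G(H)$ is finitely generated, and — when it is — compute a basis for it.

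Now I would branch on the output. Since $H$ is finitely generated by hypothesis, if $\acl_G(H)$ is \emph{not} finitely generated then $H\neq \acl_G(H)$, so $H$ is not auto-fixed, and the algorithm outputs \emph{no}. Otherwise we have explicit bases of both $H$ and $\acl_G(H)$, and it only remains to decide the equality $H=\acl_G(H)$. This amounts to testing each generator of one subgroup for membership in the other, which is a routine instance of the (solvable) membership problem in finitely generated subgroups of $\Z^m\times F_n$ via the basis algorithm of~\cite{DV}: writing the generator in normal form $t^a u$, one checks whether $u$ lies in the free-basis-generated subgroup of $F_n$ (using the free membership problem) and, if so, solves a system of linear equations over $\Z$ to see whether the corresponding $a$ lies in the appropriate coset of $L_{H}$. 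If the two subgroups turn out equal, output \emph{yes} together with the witnesses $\Psi_1,\ldots,\Psi_k$; otherwise output \emph{no}.

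The main technical content of the corollary is entirely absorbed by Theorem~\ref{closure}; the only potential obstacle is making sure that the ambient computational tools behave well, but bases, membership, and subgroup equality in $\Z^m\times F_n$ are all algorithmic by~\cite{DV}, so no further difficulty arises.
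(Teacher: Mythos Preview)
Your proposal is correct and follows essentially the same route as the paper: apply Theorem~\ref{closure}, declare $H$ not auto-fixed if $\acl_G(H)$ fails to be finitely generated, and otherwise test $H=\acl_G(H)$ via the membership problem in $G$ (cf.~\cite[Prop.~1.11]{DV}). Your explicit justification of the equivalence ``$H$ auto-fixed $\Leftrightarrow H=\acl_G(H)$'' is a welcome addition that the paper leaves implicit.
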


We want to emphasize that we did not succeed in the task of constructing an example of a finitely generated subgroup $H\leqslant_{fg} G=\Z^m\times F_n$ such that $\acl_G(H)$ is \emph{not} finitely generated; it could be that such examples do not exist so the following is an interesting open question:

\begin{que}
Is it true that, for every $H\leqslant_{fg} G=\Z^m\times F_n$, the auto-fixed closure $\acl_G(H)$ is again finitely generated ? What about the endo-fixed closure $\ecl_G(H)$ ?
\end{que}

To prove Theorem~\ref{closure} and Corollary~\ref{deciding}, we plan to follow the same strategy as in the free case, which is conceptually very easy: given $H\leqslant_{fg} F_n$, use Theorem~\ref{McCool} to compute a set of generators for the stabilizer, say $\Aut_H(F_n)=\langle \phi_1, \ldots ,\phi_k\rangle$, then use Theorem~\ref{maslakova} to compute $\Fix \phi_i$ for each $i=1,\ldots ,k$, and finally intersect them all in order to get the auto-fixed closure, $\acl_{F_n}(H)=\Fix \phi_1 \cap \cdots \cap \Fix \phi_k$ (the bound $k\leqslant 2n$ comes from free group arguments and will be lost in the more general free-abelian times free context).

To make this strategy work in the free-abelian times free case, we have to overcome two extra difficulties not present at the free case:
\begin{itemize}
\item[(1)] We need an analog to McCool's result for the group $\Z^m \times F_n$; stabilizers are going to be still finitely presented and computable, but more complicated than in the free case. The natural approach to this problem, trying to analyze directly how does an automorphism in $\Aut_H(G)$ look like, brings to a tricky matrix equation with which we were unable to solve the problem; instead, our approach will be indirect, making use of another two more powerful results from the literature.
\item[(2)] When trying to compute $\Fix \Psi_1 \cap \cdots \cap \Fix \Psi_k$, it may very well happen that some of the individual $\Fix \Psi_i$'s are not finitely generated; in this case, Theorem~\ref{compute-fix} recognizes this fact and stops, giving us nothing else, while we still have to decide whether the full intersection $\Fix \Psi_1 \cap \cdots \cap \Fix \Psi_k$ is finitely generated or not (and compute a basis for it in case it is so).
\end{itemize}

We succeed overcoming these two difficulties in Theorem~\ref{main-1} and Proposition~\ref{com-int}, respectively.

The versions of Theorem~\ref{closure} and Corollary~\ref{deciding} for endomorphisms seem to be much more tricky and remain open (their versions for the free group, contained in Theorem~\ref{Enric} and Corollary~\ref{Enric-deciding}, are already much more complicated because the monoid $\End_{F_n}(H)$ is not necessarily finitely generated, even with $H$ being so, and also computability of fixed subgroups is not known for endomorphisms).

\begin{que}
Let $G=\Z^m\times F_n$. Is there an algorithm which, given a finite set of generators for a subgroup $H\leqslant_{fg} G$, decides whether
\begin{itemize}
\item[(i)] the monoid $\End_H(G)$ is finitely generated or not and, in case it is, computes a set of endomorphisms $\Psi_1, \ldots, \Psi_k \in \End(G)$ such that $\End_{H}(G)=\langle \Psi_1, \ldots, \Psi_k \rangle$ ?
\item[(ii)] $\ecl_G(H)$ is finitely generated or not and, in case it is, computes a basis for it ?
\item[(iii)] $H$ is endo-fixed or not ?
\end{itemize}
\end{que}

Let us begin by understanding stabilizers in $G=\Z^m\times F_n$. For this, we need to remind a couple of other results from the literature.

Given a tuple of conjugacy classes $W=([g_1], \ldots ,[g_k])$ from a group $G$, the stabilizer of $W$, denoted $\Aut_W(G)$, is the group of automorphisms fixing all the $[g_i]$'s, i.e., sending the elements $g_i$ to conjugates of themselves (with possibly different conjugators); more precisely,
 $$
\Aut_W(G) =\{\phi\in \Aut(G) \mid g_1\phi \sim g_1,\ldots ,g_k\phi\sim g_k\},
 $$
where $\sim$ stands for conjugation in $G$ ($g\sim h$ if and only if $g=x^{-1}hx=h^x$ for some $x\in G$). Of course, if $H=\langle h_1,\ldots ,h_k\rangle \leqslant_{fg} G$, and $W=([h_1],\ldots ,[h_k])$, then $\Aut_H(G)\leqslant \Aut_W(G)$, without equality, in general.

McCool's Theorem~\ref{McCool} was a variation and an extension of a much earlier result: back in the 1930's, Whitehead already solved the  orbit problem for conjugacy classes in the free group: given two tuples of conjugacy classes $V=([v_1], \ldots ,[v_k])$ and $W=([w_1],\ldots ,[w_k])$ in $F_n$, one can algorithmically decide whether there is an automorphism $\phi\in \Aut(F_n)$ such that $v_i\phi \sim w_i$, for every $i=1,\ldots ,k$; see~\cite[Prop.~4.21]{LS} or~\cite{W}; this was based in the so-called Whitehead automorphisms and the peak reduction technique. McCool's work 40 years later consisted on (1) deducing as a corollary that $\Aut_W(F_n)$ if finitely presented and a finite presentation is computable from the given $W$; and (2) extending everything to real elements instead of conjugacy classes and so, getting a solution to the orbit problem for tuples of elements, and the finite presentability (and computability) for stabilizers of subgroups, stated in Theorem~\ref{McCool}.

Much more recently, a new version of these peak reduction techniques has been developed by M. Day~\cite{Day} for right-angled Artin groups, extending McCool result (1) above to this bigger class of groups; we are interested in the stabilizer part:

\begin{thm}[{Day, \cite[Thm.~1.2]{Day}}]\label{Day}
There is an algorithm that takes in a tuple $W$ of conjugacy classes from a right-angled Artin group $A(\Gamma)$ and produces a finite presentation for its stabilizer $\Aut_W (A(\Gamma))$.
\end{thm}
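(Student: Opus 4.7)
The plan is to follow the classical Whitehead/McCool peak-reduction strategy, adapted to the combinatorial geometry of the defining graph $\Gamma$. First, fix a finite generating set for $\Aut(A(\Gamma))$ consisting of ``elementary'' (Whitehead-type) automorphisms. By the results of Laurence and Servatius, such a set exists: it comprises graph automorphisms, inversions $v\mapsto v^{-1}$, transvections $v\mapsto vw$ where $\operatorname{lk}(v)\subseteq \operatorname{st}(w)$, partial conjugations (conjugating the vertices of one component of $\Gamma\setminus \operatorname{st}(w)$ by $w$), and the Whitehead-style long-range automorphisms that simultaneously multiply a family of generators by a fixed letter. Define a complexity $|W|$ on tuples of conjugacy classes, namely the sum of the cyclic lengths of cyclically-reduced representatives (with respect to an appropriate normal form for conjugacy classes in $A(\Gamma)$, as introduced by Servatius).

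Next, prove the \emph{peak-reduction theorem}: any factorisation $\alpha=\alpha_1\cdots\alpha_r$ of an element of $\Aut_W(A(\Gamma))$ into elementary automorphisms can be replaced by another factorisation, still from $W$ to $W$, in which the sequence of complexities $|W|,|W\alpha_1|,\ldots,|W\alpha_1\cdots\alpha_r|=|W|$ has no strict local maximum. This is the technical core of the proof and the main obstacle: one has to analyse, case by case on the pair of Whitehead automorphisms and on their interaction through $\Gamma$, how a ``peak'' $(\alpha_i,\alpha_{i+1})$ with $|W\alpha_1\cdots\alpha_{i-1}|<|W\alpha_1\cdots\alpha_i|>|W\alpha_1\cdots\alpha_{i+1}|$ can be substituted by a bounded-length factorisation avoiding that peak. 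The commutation relations of $A(\Gamma)$ make this substantially more delicate than in the free case because letters may slide past one another and the ``link/star'' conditions for transvections interact nontrivially with partial conjugations.

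Once peak reduction is established, finite generation of $\Aut_W(A(\Gamma))$ follows in the standard way: it is generated by the set
\[
\mathcal{S}_W=\{\alpha\in\mathcal{W} \mid |W\alpha|\leqslant |W|\}\cap \Aut_W(A(\Gamma)),
\]
where $\mathcal{W}$ is the (finite) set of elementary Whitehead automorphisms. Indeed, given any $\alpha\in\Aut_W(A(\Gamma))$, take a Whitehead factorisation and apply peak reduction; the resulting peak-free factorisation lies entirely within $\mathcal{S}_W$. The set $\mathcal{S}_W$ is finite and explicitly enumerable by computing $|W\alpha|$ for each $\alpha\in\mathcal{W}$, yielding a computable finite generating set.

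Finally, for the finite presentation, follow McCool's argument. Build a 2-complex $X_W$ whose vertices are the tuples of conjugacy classes in the $\Aut(A(\Gamma))$-orbit of $W$, whose edges are elementary Whitehead moves, and whose 2-cells are attached along all ``short'' relations (a bounded family of identities among elementary automorphisms, including the defining relations of $\Aut(A(\Gamma))$ and explicit syzygies arising from peak reduction). Peak reduction, applied to pairs of factorisations of the identity of $\Aut_W(A(\Gamma))$, shows that $X_W$ is simply connected, so Bass--Serre / covering-space arguments give a presentation of $\Aut_W(A(\Gamma))$ with generators $\mathcal{S}_W$ and relations read off from the 2-cells of $X_W$. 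The hard part, as stressed above, is obtaining a sufficiently strong peak-reduction lemma with \emph{bounded} replacements, since this is what ensures the relation set is finite and algorithmically enumerable.
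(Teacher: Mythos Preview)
This theorem is not proved in the paper at all: it is quoted verbatim from Day~\cite[Thm.~1.2]{Day} and used as a black box (specifically, inside the proof of Theorem~\ref{main-1} and in Case~1 of the Appendix). There is therefore no ``paper's own proof'' to compare your proposal against.

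That said, your sketch is a faithful high-level outline of Day's actual strategy in~\cite{Day}: a Laurence--Servatius generating set of Whitehead-type automorphisms, a complexity on tuples of conjugacy classes, a peak-reduction lemma with bounded replacements, and the McCool 2-complex argument to upgrade finite generation to a finite presentation. You correctly identify the main technical difficulty (the case analysis needed for peak reduction when transvections and partial conjugations interact through the link/star structure of $\Gamma$). As a summary of the cited result this is fine, but be aware that the present paper treats Theorem~\ref{Day} as input, not as something to be re-proved; if you are writing up this paper's arguments, a citation suffices.
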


Of course, we can make good use of Day's result in our case, because free-abelian times free groups are (a very special kind of) right-angled Artin groups; namely, $\Z^m\times F_n =A(\Gamma_{m,n})$ where $\Gamma_{m,n}$ is the complete graph on $m$ vertices and the null graph on $n$ vertices, together with $mn$ edges joining each pair of vertices one in each side. The problem in doing this is that Day's result works only for conjugacy classes and the corresponding result for real elements is not known in general for right-angled Artin groups; while we need the finite generation (and computability) of stabilizers of \emph{subgroups} in $\Z^m\times F_n$. We overcome this difficulty by using a result from Bogopolski--Ventura~\cite{BV} relating stabilizers of subgroups and of tuples of conjugacy classes, in torsion-free hyperbolic groups:

\begin{thm}[{Bogopolski--Ventura~\cite[Thm.~1.2]{BV}}]\label{BV}
Let $G$ be a torsion-free $\delta$-hyperbolic group with respect to a finite generating set $S$. Let $g_1, \ldots ,g_r$ and $g'_1,\ldots ,g'_r$ be elements of $G$ such that $g_i\sim g'_i$ for every $i=1,\ldots ,r$. Then, there is a uniform conjugator for them if and only if $w(g_1,\ldots ,g_r)\sim w(g'_1,\ldots ,g'_r)$ for every word $w$ in $r$ variables and length up to a computable constant $C=C(\delta, |S|, \sum_{i=1}^r |g_i|)$, depending only on $\delta$, $|S|$, and $\sum_{i=1}^r |g_i|$.
\end{thm}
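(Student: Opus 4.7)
The $(\Rightarrow)$ direction is immediate and I would dispatch it first: any uniform conjugator $x$ satisfies $x^{-1}w(g_1,\ldots,g_r)x=w(g_1',\ldots,g_r')$ for \emph{every} word $w$, so the word condition holds with no length restriction. The content is therefore in $(\Leftarrow)$, which I plan to prove by contrapositive: assuming each $g_i\sim g_i'$ individually but that no uniform conjugator exists, produce a word $w$ of length $\leqslant C$ with $w(g_1,\ldots,g_r)\not\sim w(g_1',\ldots,g_r')$.

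The plan rests on two effective structural facts about torsion-free $\delta$-hyperbolic groups. First, the \emph{short conjugator property}: if $a\sim b$ in $G$, there is a conjugator of length bounded by a computable function of $|a|+|b|$, $\delta$, and $|S|$. Second, the centralizer of every non-trivial element is infinite cyclic, generated by a primitive root of similarly bounded length. Using these, I would choose for each $i$ a short conjugator $x_i$ with $x_i^{-1}g_ix_i=g_i'$, and a short generator $c_i$ of $C_G(g_i)=\langle c_i\rangle$; the set of all conjugators from $g_i$ to $g_i'$ is then the coset $\langle c_i\rangle x_i$, and a uniform conjugator exists if and only if $\bigcap_{i=1}^{r}\langle c_i\rangle x_i\neq\emptyset$. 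Fixing $x_1$ as reference and writing a candidate as $x=c_1^{n_1}x_1$, each compatibility condition reads $c_1^{n_1}(x_1x_i^{-1})\in\langle c_i\rangle$. In a torsion-free hyperbolic group the subgroup $\langle c_1,c_i\rangle$ is either cyclic (when $c_1,c_i$ share a quasi-axis) or free of rank two; in the cyclic case the condition reduces to a B\'ezout problem in $\Z$, and in the free case it fixes $n_1$ to at most one value. In either case unsolvability of the system can be detected by the non-conjugacy of a short product $g_1^{a}g_i^{b}$ against $g_1'^{a}g_i'^{b}$, and the word $w$ is assembled from such a pairwise witness.

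The hard part will be extracting an explicit bound $C$ depending only on $\delta$, $|S|$, and $\sum_{i=1}^{r}|g_i|$. This means controlling, in one uniform package, (i) the lengths of shortest conjugators $x_i$ and of primitive roots $c_i$, (ii) how large the exponents $a,b$ in the test products $g_1^{a}g_i^{b}$ must be allowed to grow before non-conjugacy is \emph{forced} whenever there is no common $n_1$, and (iii) the reduction from joint to pairwise compatibility, which is specific to the cyclic-centralizer setting. Each ingredient is a quantitative consequence of thin triangles in $G$; the technical heart of the proof is bundling them into a single computable function of the three given parameters. Once this is in place, the word $w$ emerges as a short product $g_{i_1}^{a_1}\cdots g_{i_s}^{a_s}$ whose length is controlled by the same function, giving the desired constant $C(\delta,|S|,\sum_{i=1}^{r}|g_i|)$.
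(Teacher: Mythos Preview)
This theorem is not proved in the paper you are reading: it is quoted verbatim as Theorem~1.2 of Bogopolski--Ventura~\cite{BV} and used as a black box in the proof of Theorem~\ref{main-1}. There is therefore no ``paper's own proof'' to compare your proposal against.

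For what it is worth, your sketch is broadly in the spirit of the actual argument in~\cite{BV}: the short-conjugator property and the fact that centralizers in torsion-free hyperbolic groups are infinite cyclic are indeed the two structural ingredients, and the reduction to a coset-intersection problem $\bigcap_i \langle c_i\rangle x_i$ is the right framework. The point you flag as ``the hard part'' really is the hard part: turning the failure of this intersection into an explicit short word $w$ with $w(g_1,\ldots,g_r)\not\sim w(g'_1,\ldots,g'_r)$, with length controlled only by $\delta$, $|S|$, and $\sum|g_i|$, requires a careful quantitative analysis (in~\cite{BV} this goes through bounded-cancellation and quasi-geodesic stability estimates) that your outline gestures at but does not supply. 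If you intend to reconstruct the proof, that is where the work lies; for the purposes of the present paper, however, the statement is simply imported.
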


Using these results we can effectively compute generators for the stabilizer of a given subgroup $H\leqslant_{fg}\Z^m\times F_n$. For our purposes, we do not need at all any set of relations; however, for completeness with respect to Day's result, we further prove that these stabilizers are also finitely presented and compute a full set of relations (postponing this part of the proof to Appendix~\ref{6}).

\begin{thm}\label{main-1}
Let $H\leqslant_{fg} G=\Z^m\times F_n$, given by a finite set of generators. Then the stabilizer, $\Aut_H (G)$, of $H$ is finitely presented, and a finite set of generators and relations is algorithmically computable.
\end{thm}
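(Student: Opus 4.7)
The plan is to follow the authors' announced strategy: combine Day's Theorem~\ref{Day} for the RAAG $G=\Z^m\times F_n = A(\Gamma_{m,n})$ with Bogopolski--Ventura's Theorem~\ref{BV} in the torsion-free hyperbolic group $F_n$ to pass from stabilizers of tuples of conjugacy classes (which Day handles in full generality) to stabilizers of real elements (what we need here). First, compute a basis $\{t^{a_1}u_1,\ldots,t^{a_r}u_r,\,t^{b_1},\ldots,t^{b_s}\}$ of $H$ via~\cite{DV}, and let $W$ be the tuple of the conjugacy classes in $G$ of these basis elements. Day's Theorem produces a finitely presented, computable $\Aut_W(G)$; and a direct unwinding, using that $\Z^m$ is central in $G$, shows that $\Psi=\Psi_{\phi,Q,P}\in\Aut_W(G)$ if and only if $b_iQ=b_i$ and $a_jQ+u_j^{\operatorname{ab}}P=a_j$ hold \emph{exactly}, together with $u_j\phi\sim u_j$ in $F_n$ (each $j$ possibly with its own conjugator). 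Thus $\Aut_W(G)$ already enforces the full abelian content of fixing $H$ pointwise, and the only remaining gap with $\Aut_H(G)$ concerns the free part $\phi$.

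To close this gap, invoke Theorem~\ref{BV} in $F_n$: there is a computable constant $C$ such that the tuples $(u_1,\ldots,u_r)$ and $(u_1\phi,\ldots,u_r\phi)$ admit a uniform conjugator in $F_n$ iff $w(u_1\phi,\ldots,u_r\phi)\sim w(u_1,\ldots,u_r)$ in $F_n$ for every word $w$ of length at most $C$. This is a \emph{finite} list of conjugacy conditions, and each of them can be re-encoded as a conjugacy-class condition in $G$ on the element $w(t^{a_1}u_1,\ldots,t^{a_r}u_r)=t^{w^{\operatorname{ab}}A}\,w(u_1,\ldots,u_r)\in H$ (the centrality of $\Z^m$ reduces conjugacy in $G$ to conjugacy in $F_n$ of the free parts, and the abelian piece $w^{\operatorname{ab}}A$ is automatically preserved for $\Psi\in\Aut_W(G)$). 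Enlarging $W$ by this finite list to a tuple $W^{+}$ and reapplying Day's Theorem~\ref{Day} produces a finitely presented, computable subgroup $\Aut_{W^{+}}(G)\leqslant \Aut_W(G)$ every element of which has its free part $\phi$ admitting a uniform conjugator $v\in F_n$ with $v^{-1}(u_j\phi)v=u_j$ for all $j$.

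The final step is a McCool-style inner-automorphism correction. The composition formula~\eqref{.} yields $\Psi^{-1}\Gamma_x\Psi=\Gamma_{x\Psi}$, and a brief computation shows $\Gamma_{x\Psi}=\Gamma_{\pi(x\Psi)}$ whenever $x\in F_n$; hence $\Gamma_{F_n}\trianglelefteq \Aut(G)$. For $\Psi\in\Aut_{W^{+}}(G)$ and $v$ a uniform conjugator for its free part, one checks directly that $\Psi\,\Gamma_v\in\Aut_H(G)$, so $\Aut_{W^{+}}(G)=\Aut_H(G)\cdot\Gamma_{F_n}$. This yields the short exact sequence
\begin{equation*}
1\longrightarrow \Gamma_{C_{F_n}(H\pi)}\longrightarrow \Aut_H(G)\longrightarrow \Aut_{W^{+}}(G)/\Gamma_{F_n}\longrightarrow 1,
\end{equation*}
whose kernel $\Gamma_{C_{F_n}(H\pi)}$ is either trivial or infinite cyclic (and in either case computable), and whose quotient is a computable, finitely presented quotient of $\Aut_{W^{+}}(G)$. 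Both outer terms being finitely presented and computable, the same holds for $\Aut_H(G)$. The main obstacle I anticipate is extracting an \emph{explicit} finite set of defining relations: this requires tracking the conjugacy-class bookkeeping through Day's presentation and the Bogopolski--Ventura checklist, together with the modification of the matrices $(Q,P)$ induced by the inner-automorphism corrections via~\eqref{.}. Finite generation and computability of $\Aut_H(G)$, however, follow from the analysis above, and these already suffice for the applications in Sections~\ref{4} and~\ref{5}; the explicit relation analysis, being more technical, is deferred to Appendix~\ref{6}.
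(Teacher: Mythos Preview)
Your approach is essentially the paper's: form the tuple $W^{+}$ of conjugacy classes of $t^{b_i}$'s together with all $w(t^{a_1}u_1,\ldots,t^{a_r}u_r)$ for $|w|\leqslant C$ (this is exactly the paper's $W$), apply Day's Theorem~\ref{Day} to get a finite presentation of $\Aut_{W^{+}}(G)$, and then use the equality $\Aut_{W^{+}}(G)=\Aut_H(G)\cdot\Inn(G)$ (note $\Inn(G)=\Gamma_{F_n}$ since $\Z^m$ is central) to extract $\Aut_H(G)$. The paper does this last step by explicitly rewriting Day's generators as $\Psi_i=\Psi'_i\Gamma_{x_i}$ with $\Psi'_i\in\Aut_H(G)$ and handling the cases $r\geqslant 2$, $r=1$, $r=0$ separately; your short exact sequence is the same mechanism in different clothing, and the paper's Appendix~\ref{6} relation analysis is precisely the standard extension-presentation procedure you allude to.

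One small slip to fix: your claim that the kernel $\Gamma_{C_{F_n}(H\pi)}$ is ``either trivial or infinite cyclic'' is false when $r=\rk(H\pi)=0$, since then $C_{F_n}(1)=F_n$. In that case your short exact sequence degenerates (both $\Aut_H(G)$ and $\Aut_{W^{+}}(G)$ coincide, as fixing the central elements $t^{b_i}$ is the same as fixing their conjugacy classes), so Day's theorem already gives the presentation directly and no correction is needed; the paper handles this case with an explicit semidirect-product description $\Aut_H(G)\simeq \Mat_{n\times m}(\Z)\rtimes\big(\Aut_{L_H}(\Z^m)\times\Aut(F_n)\big)$. With that case singled out, your argument goes through.
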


\begin{proof}

From the given set of generators, compute a basis for $H$, say $\{t^{a_1}u_1, \ldots, t^{a_r}u_r, t^{b_1}, \ldots, t^{b_s}\}$; in particular, we have a free-basis $\{u_1,\ldots, u_r\}$ for $H\pi$, and an abelian basis $\{t^{b_1}, \ldots, t^{b_s}\}$ for $L_H=H\cap \Z^m$.

If $r=0$ then $H=L_H$ and, clearly, $\Psi_{\phi,Q,P}\in \Aut_H(G)$ if and only if $Q\in \Aut_{L_H}(\Z^m)$. So, $\Aut_H(G)$ is generated by the following finite set of automorphisms of $G$: (1) $\Psi_{\phi, I_m, 0}$, with $\phi$ running over the Nielsen automorphisms of $F_n$; (2) $\Psi_{id, Q, 0}$, with $Q$ running over the generators of $\Aut_{L_H}(\Z^m)$ computed by Theorem~\ref{Day} (note that, since $\Z^m$ is abelian, $\Aut_{L_H}(\Z^m)=\Aut_{([b_1], \ldots ,[b_s])}(\Z^m)$); and (3) $\Psi_{id, I_m, 1_{i,j}}$, with $1_{i,j}$ being the zero $n\times m$ matrix with a single 1 at position $(i,j)$, $i=1,\ldots ,n$, $j=1,\ldots ,m$. The computation of finitely many relations on these generators determining a presentation for $\Aut_H(G)$ is postponed to the Appendix~\ref{rels}.

Assume that $r=\rk (H\pi)\geqslant 1$. Apply Theorem~\ref{BV} to the free group $F_n$ and words $u_1,\ldots ,u_r$, and compute the constant $C=C(0,n, \sum_{i=1}^r |u_i|)$. Consider the tuple of elements from $G$ given by $W=\big( w_1(t^{a_1}u_1, \ldots, t^{a_r}u_r), \ldots ,w_{M}(t^{a_1}u_1, \ldots, t^{a_r}u_r), t^{b_1},\ldots ,t^{b_s}\big)$, where $w_1, \ldots ,w_{M}$ is the sequence (in any order) of all reduced words on $r$ variables and of length up to $C$. We claim that
 \begin{equation}\label{=}
\Aut_W(G)=\Aut_{H}(G)\cdot \Inn(G).
 \end{equation}
In fact, the inclusion $\geqslant$ is obvious. To see $\leqslant\,$, take $\Psi =\Psi_{\phi,Q,P}\in \Aut_W(G)$, that is, an automorphism $\Psi$ satisfying $w_i(t^{a_1}u_1, \ldots, t^{a_r}u_r)\Psi \sim w_i(t^{a_1}u_1, \ldots, t^{a_r}u_r)$ for $i=1,\ldots ,M$, and $t^{b_j}\Psi \sim t^{b_j}$ for $j=1,\ldots ,s$. We have $t^{b_j}\Psi =t^{b_j}$ (since these are central elements from $G$), and $w_i(u_1, \ldots, u_r)\phi \sim w_i(u_1, \ldots, u_r)$ so, by Theorem~\ref{BV}, $w_i(u_1, \ldots, u_r)\phi =x^{-1} w_i(u_1, \ldots, u_r)x$ for a common conjugator $x\in F_n$; in particular, $u_i\phi =x^{-1} u_i x$ for $i=1,\ldots ,r$ and so, $\phi=(\phi \gamma_{x^{-1}})\gamma_x$, with $\phi \gamma_{x^{-1}}\in \Aut_{H\pi}(F_n)$. Therefore, $\Psi=(\Psi \Gamma_{x^{-1}})\Gamma_x$, with $\Psi \Gamma_{x^{-1}}\in \Aut_{H}(G)$.

Now, by Theorem~\ref{Day}, this stabilizer is finitely presented and a finite presentation
 \begin{equation}\label{pres}
\Aut_W(G)=\langle \Psi_1, \ldots ,\Psi_{\ell} \mid R_1, \ldots ,R_d \rangle
 \end{equation}
can be computed, where the $\Psi_i$'s are explicit automorphisms of $G$, and the $R_j$'s are words on them satisfying $R_j(\Psi_1, \ldots ,\Psi_{\ell})=\id_G$. From the previous paragraph, we can algorithmically rewrite $\Psi_i =\Psi'_i \Gamma_{x_i}$ for some $\Psi'_i\in \Aut_H(G)$ and some $x_i\in F_n$, $i=1,\ldots ,\ell$ (note that some $\Psi'_i$ could be the identity, corresponding to $\Psi_i$ being possibly a genuine conjugation of $G$). Finally, let us distinguish two cases.

Suppose $r=\rk (H\pi)\geqslant 2$. We claim that $\Aut_H(G)=\langle \Psi'_1, \ldots ,\Psi'_{\ell} \rangle$: the inclusion $\geqslant$ is trivial; for the other, take $\Psi\in \Aut_H(G)\leqslant \Aut_W(G)$ and, since $\Inn(G)$ is a normal subgroup of $\Aut(G)$, we have $\Psi= w(\Psi_1, \ldots ,\Psi_{\ell})= w(\Psi'_1\Gamma_{x_1},\ldots ,\Psi'_{\ell} \Gamma_{x_{\ell}})= w(\Psi'_1,\ldots ,\Psi'_{\ell}) \Gamma_x$ for some $x\in F_n$. But both $\Psi$ and $w(\Psi'_1,\ldots ,\Psi'_{\ell})$ fix $t^{a_1}u_1,\ldots ,t^{a_r}u_r$ and $r\geqslant 2$ so, $x=1$ and $\Psi=w(\Psi'_1,\ldots ,\Psi'_{\ell})\in \langle \Psi'_1, \ldots ,\Psi'_{\ell}\rangle$.

Suppose now that $r=\rk (H\pi)=1$. The argument in the previous paragraph tells us that $\Aut_H(G)=\langle \Psi'_1, \ldots ,\Psi'_{\ell},\, \Gamma_{\hat{u}_1} \rangle$, where $\hat{u}_1$ is the root of $u_1$ in $F_n$, i.e., the unique non-proper power in $F_n$ such that $u_1=\hat{u}_1^{\alpha}$ for $\alpha>0$ (since now, in the last part of the argument, $x$ only commutes with $u_1\neq 1$).

Up to here we have proved that $\Aut_H(G)$ is finitely generated and a finite set of generators is algorithmically computable. We postpone the argument about relations to the Appendix~\ref{rels}.
\end{proof}

Now we turn to the computability of fixed points by a given collection of automorphisms.

\begin{prop}\label{com-int}
Let $G=\Z^m\times F_n$. There is an algorithm which, given $\Psi_1,\ldots ,\Psi_k\in \Aut(G)$, it decides whether $\Fix \Psi_1  \cap \cdots \cap \Fix \Psi_k$ is finitely generated or not and, in the affirmative case, computes a basis for it.
\end{prop}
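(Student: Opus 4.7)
The plan is to reduce everything to a concrete condition on the free and abelian coordinates separately, then use that intersections of fixed subgroups of automorphisms of $F_n$ are computable (Howson plus Theorem~\ref{maslakova}), and that the only subtlety — deciding finite generation — boils down to a finiteness test for an image in a f.g.~abelian group. Write $\Psi_i=\Psi_{\phi_i,Q_i,P_i}$. An element $t^a u$ lies in $\Fix\Psi_i$ if and only if $u\in\Fix\phi_i$ and $a(I_m-Q_i)=u^{\rm ab}P_i$. Hence, letting $H=\Fix\Psi_1\cap\cdots\cap\Fix\Psi_k$ and $K=\Fix\phi_1\cap\cdots\cap\Fix\phi_k\leqslant F_n$, we have $H\cap\Z^m=L:=\bigcap_i\ker(I_m-Q_i)$, a subgroup of $\Z^m$ computable by linear algebra, and
$$
H=\{t^au\mid u\in K,\ a(I_m-Q_i)=u^{\rm ab}P_i \text{ for all } i\}.
$$
Since each $\Fix\phi_i$ is f.g.~(Bestvina--Handel) and computable (Theorem~\ref{maslakova}), and $F_n$ is Howson with computable intersections via Stallings foldings, we obtain a free-basis of $K$.

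Next I would organize the solvability condition for $u\in K$ into one abelian homomorphism. Consider the additive map $F\colon\Z^m\to(\Z^m)^k$, $a\mapsto(a(I_m-Q_1),\ldots,a(I_m-Q_k))$, whose image $\im F$ is a computable f.g.~subgroup of $(\Z^m)^k$. Define
$$
\chi\colon K\to (\Z^m)^k/\im F,\qquad u\mapsto (u^{\rm ab}P_1,\ldots,u^{\rm ab}P_k)+\im F;
$$
this is a homomorphism (to an abelian group) and $H\pi=\ker\chi=:K_0$. Since $L$ is always f.g., $H$ is f.g. if and only if $K_0$ is f.g.

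The crux is deciding whether $K_0$ is f.g. Because $\chi$ factors through $K^{\rm ab}$, we have $K_0\supseteq[K,K]$. If $\rk K\leqslant 1$ then $K_0$ is cyclic or trivial, so $H$ is always f.g.~in this case. If $\rk K\geqslant 2$, I use the classical fact that a nontrivial normal subgroup of a finitely generated nonabelian free group is either of finite index (hence f.g.) or of infinite rank (hence not f.g.); applied to $K_0\vartriangleleft K$, this shows $K_0$ is f.g.~iff $[K:K_0]<\infty$, iff the image $\chi(K)\leqslant(\Z^m)^k/\im F$ is finite. This last condition is decidable: compute generators of $\chi(K)$ from a basis of $K$, put the resulting subgroup of the f.g.~abelian group $(\Z^m)^k/\im F$ in Smith normal form, and check whether it has any free summand.

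If $K_0$ is not f.g., stop and report. Otherwise $[K:K_0]<\infty$ is a computable integer, and I would obtain a free-basis $\{v_1,\ldots,v_p\}$ of $K_0$ either by Stallings foldings on a Schreier transversal or directly from the kernel description. For each $v_j$, solve the (consistent, by construction) linear system $a(I_m-Q_i)=v_j^{\rm ab}P_i$ simultaneously over $i=1,\ldots,k$ to obtain some $a_j\in\Z^m$; any particular solution works. Combining with a computed abelian basis $\{c_1,\ldots,c_q\}$ of $L$, the set $\{t^{a_1}v_1,\ldots,t^{a_p}v_p,\,t^{c_1},\ldots,t^{c_q}\}$ is a basis of $H$ in the sense of Delgado--Ventura, completing the algorithm. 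The main obstacle in this plan is precisely the finite-generation test for $K_0$: the naive attempt to compute $\Fix\Psi_i$ one at a time fails because individual $\Fix\Psi_i$'s may not be finitely generated even when the full intersection is; the trick is to treat the free and abelian obstructions simultaneously through the single abelian map $\chi$, thereby reducing the question to the finiteness of one explicit f.g.~abelian group.
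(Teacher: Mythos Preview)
Your proof is correct and follows essentially the same route as the paper's. Both arguments concatenate the $k$ fixed-point conditions into a single linear map $\Z^m\to\Z^{km}$ (your $F$, the paper's $\tilde Q$) together with a map from the abelianized free part (your $\chi$ lands in the quotient $(\Z^m)^k/\im F$, while the paper works with $N=\im\tilde Q\cap\im\tilde P'$ inside $\Z^{km}$), identify $H\pi$ as the relevant kernel/preimage, and reduce finite generation to the normal-subgroup dichotomy in $K=\bigcap_i\Fix\phi_i$; your finiteness test ``$\chi(K)$ finite'' is exactly the paper's rank condition $\rk N=\rk(\im\tilde P')$, and your basis construction via Schreier transversal plus solving the linear systems for the abelian parts matches the paper's Schreier-graph computation.
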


\begin{rem}
Two related results are Theorem~\ref{compute-fix} above, and Theorem~\cite[Thm.~4.8]{DV}. With the first one we can decide whether each $\Fix \Psi_i$ is finitely generated and, in this case, compute a basis; and with the second, assuming $\Fix \Psi_i$ and $\Fix \Psi_j$ finitely generated, we can decide whether $\Fix \Psi_i \cap \Fix \Psi_j$ is finitely generated again and, in this case, compute a basis for it. However, these two results combined in an induction argument are not enough to prove Proposition~\ref{com-int} because it could very well happen that some of the individual $\Fix \Psi_i$'s (even a partial intersection of some of them) is not finitely generated while $\Fix \Psi_1 \cap \cdots \cap \Fix \Psi_k$ is so. Thus, we are going to adapt the proof of Theorem~\ref{compute-fix} to compute directly the fixed subgroup of a finite tuple of automorphisms, without making reference to the fixed subgroup of each individual one. \end{rem}

\begin{proof}[Proof of Proposition~\ref{com-int}]
Write $\Psi_i =\Psi_{\phi_i, Q_i, P_i}\colon G\to G$, $t^a u \mapsto t^{aQ_i+u\rho P_i}u\phi_i$, for some $\phi_i\in \Aut(F_n)$, $Q_i\in \GL_m(\Z)$, and $P_i\in \Mat_{n\times m}(\Z)$, $i=1,2, \ldots, k$, where $\rho\colon F_n \twoheadrightarrow \Z^n$ is the abelianization map. We have
 $$
\begin{array}{rcl}
\Fix \Psi_1 \cap \cdots \cap \Fix \Psi_k & \!\!\!=\!\!\! & \left\{ t^a u\in G \mid u\in \cap_{i=1}^k \Fix \phi_i ,\,\, a(I_m -Q_i)=u\rho P_i,\,\, i=1,\ldots, k\right\} \\[1mm] & = & \{ t^a u\in G \mid u\in \cap_{i=1}^k \Fix \phi_i ,\,\, a(I_m -Q_1 | \cdots | I_m -Q_k)=u\rho (P_1 | \cdots | P_k)\},
\end{array}
 $$
were $(I_m-Q_1 | \cdots | I_m-Q_k)\in \Mat_{m\times km}(\Z)$ and $(P_1 | \cdots | P_k)\in \Mat_{n\times km}(\Z)$ are the indicated concatenated matrices, corresponding to linear maps $\tilde{Q}\colon \Z^m \to \Z^{km}$ and $\tilde{P}\colon \Z^n \to \Z^{km}$, respectively.


Let $\rho'$ be the restriction of $\rho$ to $\Fix \phi_1 \cap \cdots \cap \Fix \phi_k$ (not to be confused with the abelianization map of the subgroup $\Fix \phi_1 \cap \cdots \cap \Fix \phi_k$ itself), let $\tilde{P}'$ be the restriction of $\tilde{P}$ to $\im \rho'$; let $M=\im \tilde{Q}\leqslant \Z^{km}$, let $N=M\cap \im \tilde{P}'$, and consider the preimages of $N$ first by $\tilde{P}'$ and then by $\rho'$, see the following diagram:
 \begin{equation*}\label{Pic 2}
\xy
(60,0)*+{\geqslant M = \operatorname{Im} \tilde{Q}};
(59,-17.3)*+{=M\cap \im {\tilde{P}'}.};
(0,-4.5)*+{\rotatebox[origin=c]{90}{$\leqslant$}};
(24,-4.5)*+{\rotatebox[origin=c]{90}{$\unlhd$}};
(46,-4.5)*+{\rotatebox[origin=c]{90}{$\unlhd$}};
{\ar@{->>}^-{\rho} (0,0)*++{F_{n}}; (24,0)*++{\ZZ^{n}}};
{\ar^-{\tilde{P}} (24,0)*++++{}; (46,0)*++{\ZZ^{km}}};
{\ar@{->>}^-{\rho'} (-5,-9)*++{\Fix \phi_1 \cap \cdots \cap \Fix \phi_k}; (24,-9)*++{\im \rho'}};
{\ar@{->>}^-{{\tilde{P}'}} (24,-9)*+++++{}; (46,-9)*++{\im {\tilde{P}'}}};
(0,-13.5)*+{\rotatebox[origin=c]{90}{$\unlhd$}};
(24,-13.5)*+{\rotatebox[origin=c]{90}{$\unlhd$}};
(46,-13.5)*+{\rotatebox[origin=c]{90}{$\unlhd$}};
{\ar@{|->} (46,-18)*+++{N }; (24,-18)*++{N \tilde{P}^{\prime -1}}};
{\ar@{|->} (24,-18)*+++++++{}; (0,-18)*+{N \tilde{P}^{\prime -1} \rho'^{-1} }};
\endxy
 \end{equation*}
We claim that $(\Fix \Psi_1 \cap \cdots \cap \Fix \Psi_k )\pi =N\tilde{P}^{\prime -1} \rho'^{-1}$. In fact, for $u\in (\Fix \Psi_1 \cap \cdots \cap \Fix \Psi_k)\pi$, there exists $a\in \Z^m$ such that $t^a u\in \Fix \Psi_1 \cap \cdots \cap \Fix \Psi_k$, i.e., $u\in \Fix \phi_i$ and $a(I_m-Q_i)=u\rho P_i$, $i=1,\ldots ,k$. So, $u\in \Fix \phi_1\cap \cdots \cap \Fix \phi_k$ and $u\rho'\tilde{P}' =a\tilde{Q}\in M\cap \im \tilde{P}'$ and hence, $u\in N\tilde{P}^{\prime -1} \rho'^{-1}$. On the other hand, for $u\in N\tilde{P'}^{-1} \rho'^{-1}$, we have $u\in \Fix \phi_1 \cap \cdots \cap \Fix \phi_k$ and $u\rho'\tilde{P}' \in N\leqslant M=\im \tilde{Q}$ so, $u\rho \tilde{P}' =a\tilde{Q}$ for some $a\in \Z^m$; this means that $t^a u\in \Fix \Psi_1 \cap \cdots \cap \Fix \Psi_k$ and hence $u\in (\Fix \Psi_1 \cap \cdots \cap \Fix \Psi_k)\pi$. This proves the claim.

Now $\Fix \Psi_1 \cap \cdots \cap \Fix \Psi_k\leqslant G$ is finitely generated if and only if $(\Fix \Psi_1 \cap \cdots \cap \Fix \Psi_k)\pi =N\tilde{P}^{\prime -1} \rho'^{-1}$ is finitely generated, which (since it is a normal subgroup) happens if and only if $N\tilde{P}^{\prime -1} \rho'^{-1}$ is trivial (i.e., $\Fix \phi_1 \cap \cdots \cap \Fix \phi_k=\langle u\rangle$ with $u\rho\neq 0$ and $N=\{ 0\}$) or of finite index in $\Fix \phi_1 \cap \cdots \cap \Fix \phi_k$. That is, $\Fix \Psi_1 \cap \cdots \cap \Fix \Psi_k$ is finitely generated if and only if
\begin{itemize}
\item[(i)] $\Fix \phi_1 \cap \cdots \cap \Fix \phi_k=\langle u\rangle$ with $u\rho \neq 0$ and $N=\{0\}$, or
\item[(ii)] $[\im P' : N]=[\im \rho' : N\tilde{P}^{\prime -1}]=[\Fix \phi_1 \cap \cdots \cap \Fix \phi_k : N\tilde{P}^{\prime -1} \rho'^{-1}]<\infty$ or, equivalently, $\rk (N)=\rk(\im \tilde{P}')$.
\end{itemize}
These conditions can effectively be checked by computing a free-basis for $\Fix \phi_1\cap \cdots \cap \Fix \phi_k$ with Theorem~\ref{maslakova} and pull-backs of graphs, and then computing the ranks $\rk(\im \tilde{P}')$ and $\rk(N)$ with basic linear algebra techniques. So, we can effectively decide whether $\Fix \Psi_1 \cap \cdots \cap \Fix \Psi_k$ is finitely generated or not.

Finally, let us assume it is so, and let us compute a basis for $\Fix \Psi_1 \cap \cdots \cap \Fix \Psi_k$.

If we are in the situation (i) then $\Fix \phi_1 \cap \cdots \cap \Fix \phi_k =\langle u\rangle$, $u\rho \neq 0$, and $M\cap \im \tilde{P}'=N=\{0\}$ so, the only elements in $\Fix \Psi_1 \cap \cdots \cap \Fix \Psi_k$ are those of the form $t^a u^r$ with $a(I_m-\tilde{Q})=r\cdot u\rho \tilde{P}=0$. That is, $\Fix \Psi_1 \cap \cdots \cap \Fix \Psi_k =\langle u,\, t^{d_1},\ldots , t^{d_s}\rangle$ where $\langle d_1,\ldots ,d_s\rangle =E_1(Q_1)\cap \cdots \cap E_1(Q_k)\leqslant \Z^m$.

If we are in situation (ii), then we can compute a set $\{c_1, \ldots ,c_q\}\subset \Z^n$ of coset representatives of $N\tilde{P}^{\prime -1}$ in $\im\rho'$, namely $\im\rho' =(N\tilde{P}^{\prime -1})c_1 \sqcup \cdots \sqcup (N\tilde{P}^{\prime -1})c_q$. Having computed a free-basis $\{v_1, \ldots, v_p\}$ for $\Fix \phi_1 \cap \cdots \cap \Fix \phi_k$, we can choose arbitrary preimages $y_1,\ldots ,y_q$ of $c_1,\ldots ,c_q$ up in $\Fix \phi_1 \cap \cdots \cap \Fix \phi_k$, and we get a set of right coset representatives of $(\Fix \Psi_1  \cap \cdots \cap \Fix \Psi_k)\pi =N\tilde{P}^{\prime -1} \rho^{\prime -1}$ in $\Fix \phi_1 \cap \cdots \cap \Fix \phi_k$,
 \begin{equation}\label{equ}
\Fix \phi_1 \cap \cdots \cap \Fix \phi_k =(N\tilde{P}^{\prime -1}\rho^{\prime -1})y_1 \sqcup \cdots \sqcup (N\tilde{P}^{\prime -1}\rho^{\prime -1})y_q.
 \end{equation}
Now, we build the Schreier graph for $N\tilde{P}^{\prime -1}\rho^{\prime -1} \leqslant_{fi} \Fix \phi_1 \cap \cdots \cap \Fix \phi_k$ with respect to $\{v_1, \ldots, v_p \}$ in the following way: (1) take the cosets from~\eqref{equ} as vertices, and with no edge; (2) for every vertex $(N\tilde{P}^{\prime -1}\rho^{\prime -1})y_i$ and every letter $v_j$,  add an edge labeled $v_j$ from $(N\tilde{P}^{\prime -1}\rho^{\prime -1})y_i$ to $(N\tilde{P}^{\prime -1}\rho^{\prime -1})y_iv_j$, algorithmically identified among the available vertices by repeatedly solving the membership problem for $N\tilde{P}^{\prime -1}\rho^{\prime -1}$ (note that we can easily do this by abelianizing the candidate and checking whether it belongs to $N\tilde{P}^{\prime -1}$). Once we have run over all $i=1,\ldots ,q$ and all $j=1,\ldots ,p$, we have computed the full (and finite!) Schreier graph, from which we can select a maximal tree and obtain a free-basis $\{u_1,\ldots ,u_r\}$ for the subgroup corresponding to closed paths at the basepoint, i.e., for $N\tilde{P}^{\prime -1} \rho^{\prime -1}= (\Fix \Psi_1 \cap \cdots \cap \Fix \Psi_k)\pi$. Finally, solving linear systems of equations (which must be mandatorily compatible), we obtain vectors $e_1,\ldots ,e_r\in \Z^m$ such that $t^{e_1}u_1, \ldots ,t^{e_r}u_r \in \Fix\Psi_1 \cap \cdots \cap \Fix \Psi_k$. We conclude that $\{t^{e_1}u_1, \ldots ,t^{e_r}u_r,\, t^{d_1},\ldots ,t^{d_s}\}$ is a basis for $\Fix\Psi_1 \cap \cdots \cap \Fix \Psi_k$.
\end{proof}

\begin{proof}[Proof of Theorem~\ref{closure}]
From the given generators, compute a basis for $H$, say $\{t^{a_1}u_1, \ldots, t^{a_r}u_r,$ $t^{b_1}, \ldots, t^{b_s}\}$. Now, using Theorem~\ref{main-1}, we can compute automorphisms $\Psi_1,\ldots ,\Psi_k\in \Aut(G)$ such that $\Aut_H(G)=\langle \Psi_1,\ldots ,\Psi_k\rangle$. So, we have that $\acl_G(H)=\Fix\Psi_1\cap \cdots \cap \Fix\Psi_k$. Finally, using Proposition~\ref{com-int}, we can decide whether this intersection is finitely generated or not and, in the affirmative case, compute a basis for it.
\end{proof}

\begin{proof}[Proof of Corollary~\ref{deciding}]
Given generators for $H\leqslant_{fg} G$, apply Theorem~\ref{closure}. If $\acl_G(H)$ is not finitely generated then conclude that $H$ is not auto-fixed. Otherwise, we get a set of automorphisms $\Psi_1, \ldots, \Psi_k \in \Aut(G)$ such that $\acl_{G}(H)= \Fix \Psi_1 \cap \cdots \cap \Fix \Psi_k$, and a basis for $\acl_G(H)\geqslant H$. Now $H$ is auto-fixed if and only if this last inclusion is an equality (which can be algorithmically checked by using a solution to the membership problem in $G$; see~\cite[Prop.~1.11]{DV}); and in this case, $\Psi_1, \ldots, \Psi_k$ are the automorphisms such that $H=\Fix \Psi_1 \cap \cdots \cap \Fix \Psi_k$.
\end{proof}

\section{Appendix: computation of relations}\label{rels}\label{6}

Let us go back to the details of the proof of Theorem~\ref{main-1} and complete it by computing a finite set of defining relations for $\Aut_H(G)$.

\begin{proof}[Proof of Theorem~\ref{main-1} continued (relations part)]
We have already computed a finite set of generators for $\Aut_H(G)$. To find the defining relations, we distinguish again the cases $r=0$, $r\geqslant 2$, and $r=1$ (in increasing order of difficulty):

\noindent $\bullet~$ \emph{Case 1: $r=0$}. Here, we have $H=L_H$, and we know that $\Aut_H(G)$ is (finitely) generated by the automorphisms of $G$ of the form (1) $\Psi_{\phi, I_m, 0}$, with $\phi$ running over the Nielsen automorphisms of $F_n$; (2) $\Psi_{id, Q, 0}$, with $Q$ running over the generators of $\Aut_{L_H}(\Z^m)$; and (3) $\Psi_{id, I_m, 1_{i,j}}$, with $i=1,\ldots ,n$, $j=1,\ldots ,m$. Therefore, from~\cite[Thm.~5.5]{DV}, we deduce that $\Aut_H(G)\simeq \Mat_{n\times m} \rtimes \big( \Aut_{L_H}(\Z^m)\times \Aut(F_n)\big)$ with the natural action. Hence, we can easily compute an explicit finite presentation for this group by using the presentation for $\Aut_{L_H}(\Z^m)$ we got from Day's Theorem~\ref{Day}, any know presentation for $\Aut(F_n)$ (see, for example, \cite{AFV}), and the standard presentation for $\Mat_{n\times m}\simeq \Z^{nm}$.

\noindent $\bullet~$ \emph{Case 2: $r\geqslant 2$}. In this case, we already know that $\Aut_H(G)=\langle \Psi'_1, \ldots ,\Psi'_{\ell} \rangle$. Let us find a complete set of defining relations for this set of generators.

Observe first that, for every $\Psi\in \Aut_W(G)$, the decomposition $\Psi=\Psi'\Gamma_{x}$ mentioned in~\eqref{=} is unique: if $\Psi'\Gamma_x =\Psi''\Gamma_y$, with $\Psi',\Psi''\in \Aut_H(G)$ and $x,y\in F_n$, then $x^{-1}u_1 x=y^{-1}u_1 y$ and $x^{-1}u_2 x=y^{-1}u_2 y$, which implies that $xy^{-1}$ commutes with the freely independent elements $u_1, u_2$ and so, $xy^{-1}=1$; hence, $\Gamma_x=\Gamma_y$ and $\Psi'=\Psi''$. In other words, $\Aut_H(G)\cap \Inn(G)=\{\id_G\}$ and so,
 $$
\Aut_W(G)/\Inn(G)=\Aut_H(G)\Inn(G)/\Inn(G) \simeq \Aut_H(G)/\big( \Aut_H(G)\cap \Inn(G)\big) =\Aut_H(G).
 $$

We have the following two sources of natural relations among the $\Psi'_i$'s. From~\eqref{pres}, for each $i=1,\ldots ,d$ we have $\id_G=R_i(\Psi_1, \ldots, \Psi_{\ell})=R_i(\Psi'_1 \Gamma_{x_1}, \ldots, \Psi'_{\ell} \Gamma_{x_{\ell}})=R_i(\Psi'_1, \ldots, \Psi'_{\ell})\Gamma_{y_i} =R_i(\Psi'_1, \ldots, \Psi'_{\ell})$, where $y_i\in F_n$ must be 1, again, because $r\geqslant 2$. On the other hand, for each one of the $n$ generating letters of $F_n$, say $z_1,\ldots ,z_n$, compute an expression for the conjugation $\Gamma_{z_j} \in \Inn(G)\leqslant \Aut_W(G)$ in terms of $\Psi_1, \ldots ,\Psi_{\ell}$, say $\Gamma_{z_j}=S_j(\Psi_1, \ldots ,\Psi_{\ell})$, and we have $\Gamma_{z_j}=S_j(\Psi_1, \ldots ,\Psi_{\ell})=S_j(\Psi'_1\Gamma_{x_1}, \ldots ,\Psi'_{\ell}\Gamma_{x_{\ell}})=S_j(\Psi'_1, \ldots ,\Psi'_{\ell})\Gamma_{y_j}$ for some $y_j\in F_n$; but then $\id_G=S_j(\Psi'_1, \ldots ,\Psi'_{\ell})\Gamma_{y_jz_j^{-1}}=S_j(\Psi'_1, \ldots ,\Psi'_{\ell})$, $j=1,\ldots ,n$, gives us a second set of relations for $\Aut_H(G)$ (here, again, $y_jz_j^{-1}=1$ since $r\geqslant 2$). Therefore,
 $$
\begin{array}{rcl}
\Aut_H(G) & = & \Aut_W(G)/\Inn(G) \\ & = & \langle \Psi_1, \ldots ,\Psi_{\ell} \mid R_1, \ldots ,R_d\rangle /\Inn(G) \\ & = & \langle \Psi'_1, \ldots ,\Psi'_{\ell} \mid R_1, \ldots ,R_d, S_1,\ldots ,S_n\rangle.
\end{array}
 $$
(Note that $w(\Psi_1, \ldots ,\Psi_{\ell}) \mapsto w(\Psi'_1, \ldots ,\Psi'_{\ell})$ or, equivalently, $\Psi \mapsto \Psi'=\Psi\Gamma_{x^{-1}}$ for the unique possible $x\in F_n$, is the canonical projection $\Aut_W(G) \twoheadrightarrow \Aut_H(G)\simeq \Aut_W(G)/\Inn(G)$.)

\noindent $\bullet~$ \emph{Case 3: $r=1$}. Here, $H=\langle t^a u,\, t^{b_1},\ldots ,t^{b_s}\rangle\leqslant G$ with $1\neq u\in F_n$ (for notational simplicity, we have deleted the subindex 1 from $u$ and $a$). This case is a bit more complicated than Case~2 because the decomposition $\Psi=\Psi'\Gamma_x$ from~\eqref{=} is not unique now; additionally, $\Aut_H(G)$ contains some non-trivial conjugation, namely $\Gamma_{\hat{u}}$, and so we cannot mod out $\Inn(G)$ from $\Aut_W(G)$ because this would kill part of $\Aut_H(G)$.

In the present case, we know that $\Aut_H(G)=\langle \Psi'_1,\ldots ,\Psi'_{\ell}, \Gamma_{\hat{u}}\rangle$. Let us adapt the two previous sources of natural relations among them, and discover a third one. From~\eqref{pres}, for each $i=1,\ldots ,d$ we have $\id_G=R_i(\Psi_1, \ldots, \Psi_{\ell})=R_i(\Psi'_1 \Gamma_{x_1}, \ldots, \Psi'_{\ell} \Gamma_{x_{\ell}})=R_i(\Psi'_1, \ldots, \Psi'_{\ell})\Gamma_{y_i}$, for some $y_i\in F_n$. But both $\id_G$ and $R_i(\Psi'_1, \ldots, \Psi'_{\ell})$ fix $t^a u$ so, $y_i$ must equal $\hat{u}^{\alpha_i}$ for some $\alpha_i\in \Z$. Therefore, $\id_G=R_i(\Psi'_1, \ldots, \Psi'_{\ell})\Gamma_{\hat{u}}^{\alpha_i}$, $i=1,\ldots ,d$, is a first set of relations for $\Aut_H(G)$.

On the other hand, for each generating letter, $z_j$, of $F_n$, $j=1,\ldots ,n$, we have the equality $\Gamma_{z_j}=S_j(\Psi_1, \ldots ,\Psi_{\ell})=S_j(\Psi'_1\Gamma_{x_1}, \ldots ,\Psi'_{\ell}\Gamma_{x_{\ell}})=S_j(\Psi'_1, \ldots ,\Psi'_{\ell})\Gamma_{y_j}$, for some $y_j\in F_n$. But then $\id_G=S_j(\Psi'_1, \ldots ,\Psi'_{\ell})\Gamma_{y_jz_j^{-1}}$, which implies $y_jz_j^{-1}=\hat{u}^{\beta_j}$ for some $\beta_j\in \Z$. Therefore, $\id_G =S_j(\Psi'_1, \ldots ,\Psi'_{\ell})\Gamma_{\hat{u}}^{\beta_j}$, $j=1,\ldots ,n$, is a second set of relations for $\Aut_H(G)$.

Finally, observe that for $k=1,\ldots ,\ell$, $\hat{u}\Psi'_k=t^{c_k}\hat{u}$ for some $c_k\in \Z^m$ and thus, $\Gamma_{\hat{u}}$ commutes with $\Psi'_k$. Therefore, $\Psi'_k\Gamma_{\hat{u}}=\Gamma_{\hat{u}}\Psi'_k$, $k=1,\ldots ,\ell$, is a third set of relations for $\Aut_H(G)$.

We are going to prove that
 \begin{equation}\label{isom}
\Aut_H(G) \simeq \left\langle \Psi'_1, \ldots ,\Psi'_{\ell}, \Gamma_{\hat{u}} \,\, \Big| \begin{array}{ccccc} R_i(\Psi'_1, \ldots, \Psi'_{\ell})\Gamma_{\hat{u}}^{\alpha_i}, & & S_j(\Psi'_1, \ldots ,\Psi'_{\ell})\Gamma_{\hat{u}}^{\beta_j}, & & \Psi'_k\Gamma_{\hat{u}}=\Gamma_{\hat{u}}\Psi'_k \\ _{i=1,\ldots ,d} & & _{j=1,\ldots ,n} & & _{k=1,\ldots ,\ell} \end{array} \right\rangle.
 \end{equation}

To this goal, denote by ${\mathcal G}$ the group presented by the presentation on the right hand side, where elements are formal words on the `symbols' $\{\Psi'_1, \ldots ,\Psi'_{\ell}, \Gamma_{\hat{u}} \}$ subject to the relations indicated (we abuse notation, denoting by $\Psi'_1,\ldots ,\Psi'_{\ell}, \Gamma_{\hat{u}}$ both the corresponding symbols in ${\mathcal G}$, and the corresponding automorphisms in $\Aut_H(G)$, the real meaning being always clear from the context). Let us construct a map $f\colon \Aut_H(G)\to {\mathcal G}$, and a group homomorphism $G\leftarrow {\mathcal G} :\!g$ such that $fg=\id_{\Aut_H(G)}$ and $gf=\id_{\mathcal G}$. This will suffice to prove~\eqref{isom} and finish the argument.

Define $g$ by sending the symbol $\Psi'_k$ to the automorphism $\Psi'_k$, $k=1,\ldots ,\ell$, and the symbol $\Gamma_{\hat{u}}$ to the automorphism $\Gamma_{\hat{u}}$; since, as we have proved in the three previous paragraphs, the relations from ${\mathcal G}$ are really satisfied in $\Aut_H(G)$, $g$ determines a well defined homomorphism from ${\mathcal G}$ to $\Aut_H(G)$. (For later use, we emphasize the meaning of this: every equality holding symbolically in ${\mathcal G}$ holds also genuinely in $\Aut_H(G)$.) On the other hand, for $\Psi\in \Aut_H(G)$, define $\Psi f\in {\mathcal G}$ as follows: write $\Psi\in \Aut_H(G)\leqslant \Aut_W(G)$ as a word on ${\Psi_1, \ldots ,\Psi_{\ell}}$, say $\Psi=v(\Psi_1, \ldots ,\Psi_{\ell})$, compute $\Psi=v(\Psi_1, \ldots ,\Psi_{\ell})=v(\Psi'_1\Gamma_{x_1}, \ldots ,\Psi_{\ell}\Gamma_{x_{\ell}})=v(\Psi'_1, \ldots ,\Psi'_{\ell})\Gamma_y =v(\Psi'_1, \ldots ,\Psi'_{\ell})\Gamma_{\hat{u}}^{\rho}$ (in $\Aut_H(G)$ !), where $y=\hat{u}^{\rho}$ for some $\rho\in \Z$ since both $\Psi$ and $v(\Psi'_1, \ldots ,\Psi'_{\ell})$ fix $t^a u$; and, finally, define $\Psi f$ to be the word $v(\Psi'_1, \ldots ,\Psi'_{\ell})\Gamma_{\hat{u}}^{\rho}\in {\mathcal G}$.

First, we have to see that $f$ is well defined. That is, take $\Psi=w(\Psi_1,\ldots ,\Psi_{\ell})$ another expression for $\Psi$, write $\Psi=w(\Psi_1, \ldots ,\Psi_{\ell})=w(\Psi'_1, \ldots ,\Psi'_{\ell})\Gamma_{\hat{u}}^{\tau}$ (in $\Aut_H(G)$ !) for the appropriate integer $\tau\in \Z$, and we have to prove that the equality $v(\Psi'_1, \ldots ,\Psi'_{\ell})\Gamma_{\hat{u}}^{\rho}=w(\Psi'_1, \ldots ,\Psi'_{\ell})\Gamma_{\hat{u}}^{\tau}$ holds, abstractly, in ${\mathcal G}$. From the fact $v(\Psi_1,\ldots ,\Psi_{\ell})=\Psi=w(\Psi_1, \ldots ,\Psi_{\ell})$ (equalities happening in the group~\eqref{pres}), we deduce that the word $v(\Psi_1,\ldots ,\Psi_{\ell})^{-1} w(\Psi_1, \ldots ,\Psi_{\ell})$ is formally a product of conjugates of $R_1(\Psi_1,\ldots ,\Psi_{\ell}), \ldots ,R_d(\Psi_1,\ldots ,\Psi_{\ell})$, say
 $$
v(\Psi_1,\ldots ,\Psi_{\ell})^{-1} w(\Psi_1, \ldots ,\Psi_{\ell}) =\prod_{k=1}^N \big( R_{i_k}^{\epsilon_k}(\Psi_1,\ldots ,\Psi_{\ell})\big)^{c_k(\Psi_1,\ldots ,\Psi_{\ell})}.
 $$
Particularizing this identity on $\Psi'_1, \ldots ,\Psi'_{\ell}\in {\mathcal G}$, and working in ${\mathcal G}$ (i.e., only using symbolically the defining relations for ${\mathcal G}$), we have that
 $$
v(\Psi'_1, \ldots ,\Psi'_{\ell})^{-1}w(\Psi'_1, \ldots ,\Psi'_{\ell})= \prod_{k=1}^N \big( R_{i_k}^{\epsilon_k}(\Psi'_1,\ldots ,\Psi'_{\ell})\big)^{c_k(\Psi'_1,\ldots ,\Psi'_{\ell})}=
 $$
 $$
=\prod_{k=1}^N \big( \Gamma_{\hat{u}}^{-\epsilon_k \alpha_{i_k}}\big)^{c_k(\Psi'_1,\ldots ,\Psi'_{\ell})} = \prod_{k=1}^N \Gamma_{\hat{u}}^{-\epsilon_k \alpha_{i_k}} =\Gamma_{\hat{u}}^{-\sum_{k=1}^N \epsilon_k \alpha_{i_k}}.
 $$
But, applying $g$ (i.e., reading the above equality in $\Aut_H(G)$), we have
 $$
\id_G =v(\Psi_1,\ldots ,\Psi_{\ell})^{-1} w(\Psi_1, \ldots ,\Psi_{\ell})=\Gamma_{\hat{u}}^{-\rho} v(\Psi'_1,\ldots ,\Psi'_{\ell})^{-1} w(\Psi'_1, \ldots ,\Psi'_{\ell})\Gamma_{\hat{u}}^{\tau}=\Gamma_{\hat{u}}^{\tau-\rho-\sum_{k=1}^N \epsilon_k \alpha_{i_k}}
 $$
and so, the exponent must be zero, $\tau-\rho-\sum_{k=1}^N \epsilon_k \alpha_{i_k}=0$, because $n\geqslant 2$. Going back to ${\mathcal G}$, we conclude that $\Gamma_{\hat{u}}^{-\rho} v(\Psi'_1, \ldots ,\Psi'_{\ell})^{-1}w(\Psi'_1, \ldots ,\Psi'_{\ell})\Gamma_{\hat{u}}^{\tau}=\Gamma_{\hat{u}}^{\tau-\rho-\sum_{k=1}^N \epsilon_k \alpha_{i_k}}=1$, showing that the map $f$ is well defined.

Now consider the composition $fg\colon \Aut_H(G)\to {\mathcal G}\to \Aut_H(G)$: for every $\Psi\in \Aut_H(G)$, write (in $\Aut_H(G)$ !) $\Psi=v(\Psi_1, \ldots ,\Psi_{\ell})=v(\Psi'_1, \ldots ,\Psi'_{\ell})\Gamma_{\hat{u}}^{\rho}$, $\rho\in \Z$, and we have $\Psi f=v(\Psi'_1, \ldots ,\Psi'_{\ell})\Gamma_{\hat{u}}^{\rho}\in {\mathcal G}$. But then, $\Psi fg=\big( v(\Psi'_1, \ldots ,\Psi'_{\ell})\Gamma_{\hat{u}}^{\rho}\big) g=v(\Psi'_1, \ldots ,\Psi'_{\ell})\Gamma_{\hat{u}}^{\rho}=\Psi$ (in $\Aut_H(G)$ !). Hence, $fg=\id_{\Aut_H(G)}$.

Finally, consider the composition $gf\colon {\mathcal G}\to \Aut_H(G)\to {\mathcal G}$. Take $k=1,\ldots ,\ell$ and, in order to compute $\Psi'_k gf=\Psi'_k f$, we have to express $\Psi'_k\in \Aut_H(G)$ as a word on $\Psi_1, \ldots ,\Psi_{\ell}$; take, for example, $\Psi'_k=\Psi_k \Gamma_{x_k}^{-1}=\Psi_k \Gamma_{x_k(z_1, \ldots ,z_n)}^{-1}=\Psi_k x_k (\Gamma_{z_1}, \ldots ,\Gamma_{z_n})^{-1}=\Psi_k x_k (S_1(\Psi_1, \ldots ,\Psi_{\ell}), \ldots ,S_n(\Psi_1, \ldots ,\Psi_{\ell}))^{-1}$; then, rewrite in terms of $\Psi'_1, \ldots ,\Psi'_{\ell}$,
 $$
\Psi'_k=\Psi_k x_k (S_1(\Psi_1, \ldots ,\Psi_{\ell}), \ldots ,S_n(\Psi_1, \ldots ,\Psi_{\ell}))^{-1}=
\Psi'_k x_k (S_1(\Psi'_1, \ldots ,\Psi'_{\ell}), \ldots ,S_n(\Psi'_1, \ldots ,\Psi'_{\ell}))^{-1}\Gamma_{\hat{u}}^{\rho},
 $$
for the appropriate integer $\rho\in \Z$; and we have, in ${\mathcal G}$ (i.e., only using symbolically the defining relations for ${\mathcal G}$),
 $$
\begin{array}{rcl}
\Psi'_k gf =\Psi'_k f & = & \Psi'_k x_k (S_1(\Psi'_1, \ldots ,\Psi'_{\ell}), \ldots ,S_n(\Psi'_1, \ldots ,\Psi'_{\ell}))^{-1}\Gamma_{\hat{u}}^{\rho} \\ & = & \Psi'_k x_k (\Gamma_{\hat{u}}^{-\beta_1}, \ldots ,\Gamma_{\hat{u}}^{-\beta_n})^{-1}\Gamma_{\hat{u}}^{\rho} \\ & = & \Psi'_k \Gamma_{\hat{u}}^{\, x_k^{\rm ab}\beta^T} \Gamma_{\hat{u}}^{\rho} \\ & = & \Psi'_k \Gamma_{\hat{u}}^{\, x_k^{\rm ab}\beta^T+\rho},
\end{array}
 $$
where $\beta =(\beta_1, \ldots ,\beta_n)\in \Z^n$. But, applying $g$, using $fg=\id_{\Aut_H(G)}$, and cancelling $\Psi'_i$ from the left, we obtain $\id_G =\Gamma_{\hat{u}}^{\, x_k^{\rm ab}\beta^T+\rho}$ and so, $x_k^{\rm ab}\beta^T+\rho =0$. Hence, back in ${\mathcal G}$, $\Psi'_k gf=\Psi'_k$, for $k=1,\ldots ,\ell$.

Similarly,
 $$
\begin{array}{rcl}
\Gamma_{\hat{u}} gf =\Gamma_{\hat{u}} f & = & \big(\hat{u}(\Gamma_{z_1},\ldots ,\Gamma_{z_n}) \big) f \\ & = & \big(\hat{u}(S_1(\Psi_1, \ldots ,\Psi_{\ell}),\ldots ,S_n(\Psi_1, \ldots ,\Psi_{\ell})) \big) f \\ & = & \hat{u}(S_1(\Psi'_1, \ldots ,\Psi'_{\ell}),\ldots ,S_n(\Psi'_1, \ldots ,\Psi'_{\ell})) \Gamma_{\hat{u}}^{\rho} \\ & = & \hat{u}(\Gamma_{\hat{u}}^{-\beta_1}, \ldots ,\Gamma_{\hat{u}}^{-\beta_n})\Gamma_{\hat{u}}^{\rho} \\ & = & \Gamma_{\hat{u}}^{\, -\hat{u}^{\rm ab}\beta^T+\rho},
\end{array}
 $$
for the appropriate integer $\rho\in Z$. But, applying $g$, we obtain $\Gamma_{\hat{u}}=\Gamma_{\hat{u}}^{\, -\hat{u}^{\rm ab}\beta^T+\rho}$ (in $\Aut_H(G)$ !) and so, $-\hat{u}^{\rm ab}\beta^T+\rho=1$. Hence, back in ${\mathcal G}$, $\Gamma_{\hat{u}}gf=\Gamma_{\hat{u}}$, finishing the proof that $gf=\id_{\mathcal G}$.

This completes the proof of the isomorphism~\eqref{isom} and so, the proof of the Theorem.
\end{proof}

The above proof that stabilizers of subgroups of $G=\Z^m\times F_n$ are finitely presented (and a finite presentation is computable) makes a strong use of the fact that the center of $G$ is $\Z^m$, i.e., the elements of the form $t^a$ commute with everybody in $G$. For this reason, this proof is far from generalizing to arbitrary right angled Artin groups, providing an analog of Day's Theorem~\ref{Day} for real elements instead of conjugacy classes. This suggests the following question, which is open as far as we know.

\begin{que}
Is it true that, for every finitely generated subgroup of a right angled Artin group, $H\leqslant_{fg} A(\Gamma)$, the stabilizer $\Aut_H(A(\Gamma))$ is finitely generated ? and finitely presented ? and a presentation algorithmically computable from the given generators for $H$ ?
\end{que}

\section*{Acknowledgements}

\noindent The authors acknowledge partial support from the Spanish Agencia Estatal de Investigaci\'on, through grant MTM2017-82740-P (AEI/FEDER, UE), and also from the Barcelona Graduate School of Mathematics through the ``Mar\'{\i}a de Maeztu" Programme for Units of Excellence in R\&D (MDM-2014-0445). The first named author wants to thank the hospitality and support of the Barcelona Graduate School of Mathematics and the Universitat Polit\`ecnica de Catalunya.

\end{document}